\documentclass[12pt,a4paper,reqno]{amsart}
\usepackage[utf8]{inputenc}
\usepackage{amsmath}
\usepackage{geometry}
\usepackage{amssymb, latexsym}
\usepackage{verbatim}
\usepackage{enumerate}
\usepackage{comment}
\usepackage{txfonts}

\usepackage{mathtools}
\usepackage[tableposition=top]{caption}
\usepackage{booktabs,dcolumn}

\newtheorem{theorem}{Theorem}[section]
\newtheorem{proposition}[theorem]{Proposition}

\newtheorem{lemma}[theorem]{Lemma}
\newtheorem{corollary}[theorem]{Corollary}
\newtheorem{conjecture}[theorem]{Conjecture}

\newtheorem{definition}[theorem]{Definition}
\newtheorem{remark}[theorem]{Remark}

\newtheorem{example}[theorem]{Example}

\newcommand\E{\mathbb{E}}
\newcommand\R{\mathbb{R}}
\newcommand\Z{\mathbb{Z}}
\newcommand\N{\mathbb{N}}
\newcommand\D{\mathbb{D}}
\newcommand\Disk{\mathbb{D}}
\newcommand\C{\mathbb{C}}

\newcommand\eps{\varepsilon}
\newcommand\plim{\mathop{\widetilde \lim}}

\renewcommand\P{\mathbf{P}}


\begin{document}
\title[Correlations of multiplicative functions]{The structure of correlations of multiplicative functions at almost all scales, with applications to the Chowla and Elliott conjectures}

\author{Terence Tao}
\address{Department of Mathematics, UCLA\\
405 Hilgard Ave\\
Los Angeles CA 90095\\
USA}
\email{tao@math.ucla.edu}

\author{Joni Ter\"av\"ainen}
\address{Mathematical Institute, University of Oxford\\ Radcliffe Observatory Quarter, Woodstock Rd\\
Oxford OX2 6GG\\
UK}
\email{joni.teravainen@maths.ox.ac.uk}

\begin{abstract}	We study the asymptotic behaviour of higher order correlations
$$ \E_{n \leq X/d} g_1(n+ah_1) \cdots g_k(n+ah_k)$$
as a function of the parameters $a$ and $d$, where $g_1,\dots,g_k$ are bounded multiplicative functions, $h_1,\dots,h_k$ are integer shifts, and $X$ is large.  Our main structural result asserts, roughly speaking, that such correlations asymptotically vanish for almost all $X$ if $g_1 \cdots g_k$ does not (weakly) pretend to be a twisted Dirichlet character $n \mapsto \chi(n)n^{it}$, and behave asymptotically like a multiple of $d^{-it} \chi(a)$ otherwise. This extends our earlier work on the structure of logarithmically averaged correlations, in which the $d$ parameter is averaged out and one can set $t=0$. Among other things, the result enables us to establish special cases of the Chowla and Elliott conjectures for (unweighted) averages at almost all scales; for instance, we establish the $k$-point Chowla conjecture $ \E_{n \leq X} \lambda(n+h_1) \cdots \lambda(n+h_k)=o(1)$ for $k$ odd or equal to $2$ for all scales $X$ outside of a set of zero logarithmic density.
\end{abstract}

\maketitle

\section{Introduction}

\subsection{The Chowla and Elliott conjectures}

Define a \emph{$1$-bounded multiplicative function} to be a function $g: \N \to \D$ from the natural numbers $\N \coloneqq \{1,2,\dots\}$ to the unit disk $\Disk \coloneqq \{z \in \C: |z| \leq 1 \}$ satisfying $g(nm) = g(n) g(m)$ whenever $n,m$ are coprime.  If in addition $g(nm)=g(n)g(m)$ for all $n,m \in \N$, we say that $g$ is \emph{completely multiplicative}. In addition, we adopt the convention that $g(n)=0$ when $n$ is zero or a negative integer.

This paper is concerned with the structure of higher order correlations of such functions.  To describe our results, we need some notation for a number of averages.

\begin{definition}[Averaging notation]  Let $f: A \to \C$ be a function defined on a non-empty finite set $A$.
\begin{itemize}
\item[(i)]  (Unweighted averages)  We define
$$ \E_{n \in A} f(n) \coloneqq \frac{\sum_{n \in A} f(n)}{\sum_{n \in A} 1}.$$
\item[(ii)]  (Logarithmic averages)  If $A$ is a subset of the natural numbers $\N$, we define
$$ \E_{n \in A}^{\log} f(n) \coloneqq \frac{\sum_{n \in A} \frac{f(n)}{n}}{\sum_{n \in A} \frac{1}{n}}.$$
\item[(iii)]  (Doubly logarithmic averages)  If $A$ is a subset of the natural numbers $\N$, we define
$$ \E_{n \in A}^{\log\log} f(n) \coloneqq \frac{\sum_{n \in A} \frac{f(n)}{n \log(1+n)}}{\sum_{n \in A} \frac{1}{n \log(1+n)}}.$$
\end{itemize}
Of course, the symbol $n$ can be replaced here by any other free variable.  For any real number $X \geq 1$, we use $\E_{n \leq X} f(n)$ as a synonym for $\E_{n \in \N \cap [1,X]} f(n)$, and similarly for $\E_{n \leq X}^{\log} f(n)$ and $\E_{n \leq X}^{\log\log} f(n)$.  If we use the symbol $p$ (or $p_1, p_2$, etc.) instead of $n$, we implicitly restrict $p$ to the set of primes ${\P} \coloneqq \{2,3,5,7,\dots\}$, thus for instance for $X \geq 2$, $\E_{p \leq X} f(p)$ is a synonym for $\E_{p \in {\P} \cap [2,X]} f(p)$, and similarly for $\E_{p \leq X}^{\log} f(p)$.  
\end{definition}

\begin{remark}  The use of $\log(1+n)$ in the $\E^{\log\log}$ notation instead of $\log n$ is only in order to avoid irrelevant divergences at $n=1$, and the shift by $1$ may otherwise be ignored.  Because of the prime number theorem, prime averages such as $\E_{p \leq X} f(p)$ are often of ``comparable strength'' to logarithmic averages $\E_{n \leq X}^{\log} f(n)$, and similarly logarithmic prime averages such as $\E_{p \leq X}^{\log} f(p)$ are of comparable strength to $\E_{n \leq X}^{\log\log} f(n)$.  See Lemma \ref{chug} for a more precise statement.  
\end{remark}

Following Granville and Soundararajan \cite{gs}, given two $1$-bounded multiplicative functions $f,g: \N \to \Disk$, and $X \geq 1$, we define the \emph{pretentious distance} $\D(f,g;X)$ between $f$ and $g$ up to scale $X$ by the formula
$$ \D( f, g; X) \coloneqq \left( \sum_{p \leq X} \frac{1 - \mathrm{Re}(f(p) \overline{g(p)})}{p} \right)^{1/2}.$$

It is conjectured that multiple correlations of $1$-bounded multiplicative functions should asymptotically vanish unless all of the functions involved ``pretend'' to be twisted Dirichlet characters in the sense of the pretentious distance.  More precisely, the following conjecture is essentially due to Elliott.

\begin{conjecture}[Elliott conjecture]\label{elliott-conj}  Let $g_1,\dots,g_k: \N \to \Disk$ be $1$-bounded multiplicative functions for some $k \geq 1$.  Assume that there exists $j\in \{1,\dots,k\}$ such that for every Dirichlet character $\chi$ one has
\begin{equation}\label{suptx}
 \inf_{|t| \leq X} \D(g_j, n \mapsto \chi(n)n^{it}; X) \to \infty
\end{equation}
as $X \to \infty$.
\begin{itemize}
\item[(i)]  (Unweighted Elliott conjecture)  If $h_1,\dots,h_k \in \Z$ are distinct integers, then
$$ \lim_{X\to \infty}\E_{n \leq X} g_1(n+h_1) \cdots g_k(n+h_k) = 0.$$
\item[(ii)]  (Logarithmically averaged Elliott conjecture)  If $h_1,\dots,h_k \in \Z$ are distinct integers, then
$$ \lim_{X\to \infty}\E_{n \leq X}^{\log} g_1(n+h_1) \cdots g_k(n+h_k) = 0.$$
\end{itemize}
\end{conjecture}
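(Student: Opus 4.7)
The conjecture as stated is a central open problem in probabilistic number theory, with only partial results presently known; accordingly my plan targets the logarithmically averaged variant (ii) and the unweighted variant (i) at almost all scales $X$, which is what the structural machinery foreshadowed in the abstract is designed to yield.

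For (ii), after a standard reduction to completely multiplicative $g_j$, I would apply an entropy decrement argument in the spirit of Tao's proof of the logarithmic two-point Chowla conjecture. Along a subsequence $X_m \to \infty$ this replaces the target correlation, up to $o(1)$, by an average over dilations,
$$\E_{d \leq H_m} \E^{\log}_{n \leq X_m} g_1(d(n+h_1)) \cdots g_k(d(n+h_k)),$$
for some slowly growing $H_m$. The main step is then to establish a structural theorem (presumably the principal result of this paper) asserting that, as a function of $d$, the inner correlation decomposes as a pretentious main term behaving like a constant times $d^{-it}\chi(d)$, plus an error that is negligible after averaging; Matom\"aki--Radziwi\l\l-type short-interval estimates for multiplicative functions would be the key tool for controlling the error. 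The non-pretentious hypothesis \eqref{suptx} would then force the coefficient of the pretentious part to vanish, yielding the desired cancellation.

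Part (i) requires an additional transference step. The plan is to use the same structural theorem to show that, outside an exceptional set of scales of zero logarithmic density, the unweighted correlation $\E_{n \leq X} g_1(n+h_1) \cdots g_k(n+h_k)$ is well approximated by its logarithmic counterpart, so that (ii) concludes the argument at almost every $X$. Handling an individual scale $X$ — which is what the conjecture as literally stated demands — would require quantitative decoupling between consecutive values of $X$ that, to my knowledge, is far beyond current techniques.

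The principal obstacle is therefore claim (i) at every $X$, not merely at almost every $X$: even the simplest nontrivial case $k=2$, $g_1 = g_2 = \lambda$ (the Liouville function) remains open in this sharp form. A secondary but real difficulty, internal to the plan above, is proving the structural theorem with enough uniformity in the shifts $h_j$, the dilation $d$, and the implicit error rates that the entropy decrement step and the Matom\"aki--Radziwi\l\l inputs can be combined without destroying the main cancellation.
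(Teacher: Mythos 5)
You have correctly recognized that the statement is a \emph{conjecture}, not a theorem: the paper does not prove Conjecture \ref{elliott-conj}, and there is consequently no ``paper's own proof'' to compare against. Your proposal is therefore appropriately structured as an account of which partial results are within reach and by what means, and on that score it is broadly aligned with what the paper actually does (Theorem \ref{main} plus Corollaries \ref{elliott-1} and \ref{elliott-2}). That said, a few points in your sketch deviate from the paper's actual route in ways worth flagging.

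First, the hypothesis you can carry through the entropy-decrement/structural argument is \emph{not} the Elliott hypothesis \eqref{suptx} on a single $g_j$. The structural theorem (Theorem \ref{main}) is phrased in terms of whether the \emph{product} $G = g_1 \cdots g_k$ weakly pretends to be a twisted Dirichlet character $\chi(n) n^{it}$, and Corollary \ref{elliott-1} accordingly assumes non-pretentiousness of $G$, which is a genuinely different (neither weaker nor stronger) hypothesis than \eqref{suptx}. Only in the two-point case $k=2$ does the paper recover the Elliott-type hypothesis on a single $g_j$ (Corollary \ref{elliott-2}), and there it needs to import the logarithmic two-point Elliott theorem of \cite{tao} as a black box. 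Your plan implicitly assumes \eqref{suptx} ``forces the coefficient of the pretentious part to vanish,'' but that step is where the single-function hypothesis fails to directly feed into the product-pretentiousness dichotomy; this is the same obstruction that prevents the $k$-point Chowla conjecture from following at even $k > 2$.

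Second, the transference from logarithmic to unweighted averages at almost all scales is not done by showing the two averages are close at almost all $X$; in fact, the structural theorem is applied directly to the family of unweighted correlations $f_d(a) = \lim^*_X \E_{n\le X/d}\,\cdots$, and the conclusion $\E^{\log\log}_{d\le X}|f_d(1)| \to 0$ is converted into smallness at almost all scales via a one-sided Hardy--Littlewood maximal inequality applied to the exceptional set on the logarithmic scale (the ``multiplicative point of density'' argument in the proof of Corollary \ref{elliott-1}). This is a genuinely different mechanism from logarithmic-to-unweighted approximation, and it is what makes the $d$-average (rather than a single $d$) the right object. Your instinct to average over dilations $d$ is right, but the averaging sits inside the structural theorem rather than appearing only in the final transference step, and the Archimedean character $d^{-it}$ you mention correctly is central: it is precisely what distinguishes the unweighted case from the earlier logarithmic one, and it is extracted by a new stability lemma for characters of $(0,\infty)$ (Lemma \ref{dira}) that has no analogue in \cite{tt}.

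Your concluding assessment --- that claim (i) at \emph{every} $X$ is the genuine open problem, with even $k=2$ for $\lambda$ out of reach --- is accurate and matches the paper's remarks. A correct sharpening would be: the almost-all-scales version of (i) is established by this paper precisely when $g_1\cdots g_k$ is non-pretentious (any $k$) or when $k=2$ under the Elliott hypothesis, and the almost-all-scales version of (ii) for $k=2$ was already known unconditionally from \cite{tao}.
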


Conjecture \ref{elliott-conj}(i) was first stated by Elliott in \cite{elliott},
\cite{elliott2}, with the condition \eqref{suptx} weakened to the assertion that $\D(g_j, n \mapsto \chi(n)n^{it}; X) \to \infty$ for each fixed $t$, with no uniformity in $t$ assumed.  However, it was shown in \cite{mrt} that this version of the conjecture fails for a technical reason.  By summation by parts, Conjecture \ref{elliott-conj}(i) implies Conjecture \ref{elliott-conj}(ii).  At present, both forms of the Elliott conjecture are known for $k=1$ (thanks to Halász's theorem \cite{halasz}), while the $k=2$ case of the logarithmic Elliott conjecture was established in \cite{tao}.  Specialising the above conjecture to the case of the Liouville function\footnote{For the definitions of the standard multiplicative functions used in this paper, see Subsection \ref{subsec: not}.} $\lambda$, we recover the following conjecture of Chowla \cite{chowla}, together with its logarithmically averaged form.

\begin{conjecture}[Chowla conjecture]\label{chowla-conj}  Let $k \geq 1$ be a natural number.
\begin{itemize}
\item[(i)]  (Unweighted Chowla conjecture)  If $h_1,\dots,h_k \in \Z$ are distinct integers, then
$$ \lim_{X\to \infty}\E_{n \leq X} \lambda(n+h_1) \cdots \lambda(n+h_k) = 0.$$
\item[(ii)]  (Logarithmically averaged Chowla conjecture)  If $h_1,\dots,h_k \in \Z$ are distinct integers, then
$$ \lim_{X\to \infty}\E_{n \leq X}^{\log} \lambda(n+h_1) \cdots \lambda(n+h_k) = 0.$$
\end{itemize}
\end{conjecture}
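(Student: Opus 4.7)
The strategy is to derive Conjecture \ref{chowla-conj} as the specialization of Conjecture \ref{elliott-conj} to $g_1 = \cdots = g_k = \lambda$. The prerequisite is to verify the non-pretentiousness hypothesis \eqref{suptx} for $\lambda$: since $\lambda(p) = -1$ on every prime,
$$ \D(\lambda, n \mapsto \chi(n)n^{it}; X)^2 \;=\; \sum_{p\leq X} \frac{1 + \mathrm{Re}(\chi(p)p^{it})}{p}. $$
By Mertens' theorem, $\sum_{p\leq X} 1/p = \log\log X + O(1)$, while a standard zero-free-region estimate for $L(s,\chi)$ gives $\sum_{p\leq X}\mathrm{Re}(\chi(p)p^{it})/p = O_\chi(\log\log\log X)$ uniformly in $|t| \leq X$. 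Hence the infimum in \eqref{suptx} diverges as $X \to \infty$ for every Dirichlet character $\chi$, and both parts of Chowla follow from the respective parts of Elliott.

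It remains to attack Elliott. For $k = 1$ both forms reduce to Hal\'asz's theorem, while for $k = 2$ the logarithmic form is due to Tao via an entropy-decrement argument combined with the multiplicative-dilation symmetry $n \mapsto pn$ (available only after logarithmic averaging) and the Matom\"aki--Radziwi\l\l{} short-interval theorem. For general $k$ the plan is to invoke the paper's main structural result, which classifies the asymptotic behaviour of $\E_{n \leq X/d} g_1(n+ah_1) \cdots g_k(n+ah_k)$, as a function of $(a,d)$, as being either negligible or a multiple of $d^{-it}\chi(a)$. Non-pretentiousness of $\lambda$, already verified above, rules out the second alternative, and averaging the identity in $(a,d)$ then collapses the surviving correlation to $o(1)$.

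The hardest step, and the reason one should expect the unweighted Chowla conjecture in full generality only for $k$ odd or $k = 2$ outside a zero-logarithmic-density set of scales, is the passage from logarithmic to unweighted averages at \emph{every} individual $X$. Without the freedom to dilate the scale parameter $d$, there is no direct mechanism to exclude conspiracies on a sparse set of bad $X$; and for even $k \geq 4$ the parity cancellation that rescues the odd case (where one effectively picks up a sign from the Liouville function under reflection of the shifts) is also unavailable. Removing the exceptional set of scales, or going beyond $k=2$ and odd $k$ in the unweighted regime, therefore appears to require qualitatively new input — for instance, a version of the Matom\"aki--Radziwi\l\l{} theorem at all scales rather than almost all scales, or a structural theorem that does not rely on averaging the dilation parameter $d$.
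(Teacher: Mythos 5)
The statement you are proving is Conjecture~\ref{chowla-conj}, which the paper explicitly presents as an open conjecture and does \emph{not} prove; the only content the paper provides for it is the one-line observation that it is the specialization of Conjecture~\ref{elliott-conj} to $g_1 = \cdots = g_k = \lambda$. Your first paragraph reproduces that reduction, and indeed the only thing to verify is that $\lambda$ satisfies \eqref{suptx}. However, your intermediate bound $\sum_{p\leq X}\mathrm{Re}(\chi(p)p^{it})/p = O_\chi(\log\log\log X)$ uniformly in $|t|\leq X$ is not correct: for $\chi$ principal the sum is $\log\log X + O_\chi(1)$, and even for $\chi$ nonprincipal the classical zero-free region only delivers $O_\chi(\log\log X)$ uniformly in $|t|\leq X$, which on its own does not finish the argument since the implicit constant could a priori swallow the Mertens main term. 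The standard verification of \eqref{suptx} for $\lambda$ instead uses the pretentious triangle inequality: if $\D(\lambda, \chi n^{it}; X)$ stayed bounded for some $|t|\leq X$, then $\D(1, \chi^2 n^{2it}; X)\leq 2\D(\lambda,\chi n^{it};X)$ would also stay bounded, forcing $\chi^2$ principal and $|t|$ small; one then checks directly that $\D(\lambda,\chi;X)^2 = \log\log X + \sum_{p\leq X}\chi(p)/p + O(1)$ still diverges because $\sum_p \chi(p)/p$ converges for nonprincipal $\chi$.

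Your remaining paragraphs then correctly observe that the Elliott conjecture --- and hence Chowla --- cannot be completed from the paper's results: Theorem~\ref{main} together with Corollaries~\ref{elliott-1}, \ref{elliott-2}, and \ref{chow} only give the unweighted correlation vanishing at almost all scales for $k$ odd or $k=2$, and the logarithmically averaged version is known only for $k\leq 2$ and for odd $k$. Your diagnosis of the missing ingredients (no mechanism to exclude conspiracies on a sparse set of scales; no parity cancellation available for even $k\geq 4$) is an accurate summary of the obstructions. But to be clear about the task at hand: what you have written is a reduction plus a survey of partial results, not a proof, and the conjecture remains open --- the paper itself contains no proof of this statement beyond the trivial specialization of Elliott.
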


Note that for $k=1$, the unweighted Chowla conjecture is equivalent to the prime number theorem, while the logarithmically averaged 1-point Chowla conjecture has a short elementary proof.  No further cases of the unweighted Chowla conjecture are currently known, but the logarithmically averaged Chowla conjecture has been established for $k=2$ in \cite{tao} and for all odd values of $k$ in \cite{tt} (with a second proof given in \cite{tt-odd}).
The logarithmically averaged Chowla conjecture is also known to be equivalent to the logarithmically averaged form of a conjecture of Sarnak \cite{sarnak}; see \cite{tao-higher}.  See also \cite{mrt} for a version of Elliott's conjecture where one averages over the shifts $h_i$.  One can also formulate an analogous version of Chowla's conjecture for the M\"obius function, for which very similar\footnote{If one generalises the Chowla conjecture by using affine forms $a_in+h_i$ instead of shifts $n+h_i$, then a simple sieving argument can be used to show the equivalence of such generalised Chowla conjectures for the Liouville function and their counterparts for the M\"obius function; we leave the details to the interested reader.} results are known.

In \cite{tt}, we obtained the following special case of the logarithmically averaged Elliott conjecture (Conjecture \ref{chowla-conj} (ii)).  We say that a $1$-bounded multiplicative function $f: \N \to \Disk$ \emph{weakly pretends} to be another $1$-bounded multiplicative function $g: \N \to \Disk$ if
$$ \lim_{X \to \infty} \frac{1}{\log\log X} \D(f,g;X)^2 = 0$$
or equivalently
$$ \sum_{p\leq X} \frac{1-\mathrm{Re}(f(p)\overline{g(p)})}{p} = o(\log \log X).$$

\begin{theorem}[Special case of logarithmically averaged Elliott]\label{lae} \cite[Corollary 1.6]{tt}  Let $k \geq 1$, and let $g_1,\dots,g_k: \N \to \Disk$ be $1$-bounded multiplicative functions such that the product $g_1 \cdots g_k$ does not weakly pretend to be any Dirichlet character $n \mapsto \chi(n)$.  Then for any integers $h_1,\dots,h_k$, one has
$$ \lim_{X\to \infty}\E_{n \leq X}^{\log} g_1(n+h_1) \cdots g_k(n+h_k) = 0.$$
\end{theorem}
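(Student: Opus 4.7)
The plan is to derive this from the main structural result of \cite{tt} combined with Hal\'asz's theorem.

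First, following the approach of \cite{tao, tt}, one combines the Matom\"aki--Radziwi{\l}{\l}--Tao short interval theorem with an entropy decrement argument to obtain a dilation-invariance identity of the shape
$$ \E^{\log}_{n \leq X} G(n) = \E_{p \leq P} \E^{\log}_{n \leq X/p} G(pn) + o(1), $$
where $G(n) \coloneqq g_1(n+h_1)\cdots g_k(n+h_k)$ and $P$ is a slowly growing parameter. This identity encodes the multiplicative symmetry of the correlations and serves as the starting point for any structural analysis.

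Second, I would invoke the structure theorem of \cite{tt}, whose proof uses a finitary ergodic-theoretic / Furstenberg-correspondence argument to exploit the above identity. The upshot is a decomposition of the form
$$ \E^{\log}_{n \leq X} G(n) = \sum_{\chi} c_\chi(h_1,\dots,h_k) \cdot \E^{\log}_{n \leq X} (g_1 \cdots g_k)(n)\, \overline{\chi(n)} + o(1), $$
where the sum runs over Dirichlet characters $\chi$ of bounded conductor depending on the $g_j$. A key point is that the logarithmic averaging suppresses any archimedean twists $n^{it}$, since $\E^{\log}_{n \leq X} n^{it} = O(1/\log X)$ for $t \neq 0$, so only the discrete Dirichlet-character part survives, as opposed to the situation in the current paper (which must track the $n^{it}$ obstruction).

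Third, by hypothesis, $g_1 \cdots g_k$ does not weakly pretend to any Dirichlet character $\chi$, so Hal\'asz's theorem in its logarithmic formulation yields $\E^{\log}_{n \leq X} (g_1 \cdots g_k)(n)\overline{\chi(n)} = o(1)$ for each of the finitely many characters $\chi$ appearing in the decomposition. Consequently every term on the right-hand side is $o(1)$, and the theorem follows upon letting $X \to \infty$.

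The main obstacle is the structure theorem invoked in the second step: extracting the discrete character decomposition from the dilation identity requires a careful analysis of the spectrum of a limiting measure-preserving system under the $\N^{\times}$-action induced by multiplicative dilations, and it is here that essentially all the non-trivial technical work in \cite{tt} lies. The present corollary is then a clean deduction from that structural statement and Hal\'asz's theorem.
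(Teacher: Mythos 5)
There is a genuine gap. The paper's deduction of Theorem \ref{lae} from Theorem \ref{Str} is immediate: case (i) of Theorem \ref{Str} asserts, under exactly the non-pretentiousness hypothesis of Theorem \ref{lae}, that the generalised-limit correlation function $f(a) = \lim^*_{X\to\infty}\E^{\log}_{n\leq X}g_1(n+ah_1)\cdots g_k(n+ah_k)$ vanishes identically for every generalised limit $\lim^*$. Taking $a=1$ and noting that a bounded sequence all of whose generalised limits vanish must converge to $0$, one obtains the conclusion. No further appeal to Hal\'asz is needed.

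The decomposition you invoke,
\begin{align*}
\E^{\log}_{n \leq X} G(n) = \sum_{\chi} c_\chi(h_1,\dots,h_k) \cdot \E^{\log}_{n \leq X} (g_1 \cdots g_k)(n)\, \overline{\chi(n)} + o(1),
\end{align*}
is not what the structure theorem of \cite{tt} (Theorem \ref{Str} here) states, and I do not believe it is proved there. Theorem \ref{Str} gives a dichotomy: either $G = g_1\cdots g_k$ does not weakly pretend to any Dirichlet character and $f\equiv 0$, or $G$ weakly pretends to a single character $\chi$ (essentially unique up to co-training) and $f$ is a uniform limit of $\chi$-isotypic periodic functions. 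It does not express the correlation as a linear combination of twisted mean values $\E^{\log}_{n\leq X}G(n)\overline{\chi(n)}$ over a family of characters; the $\chi$-isotypic periodic functions $F_i$ in the structure theorem are not of this form, and their values at $1$ are not given by such a product. Your reasoning is thus circular: you are using the non-pretentiousness hypothesis to kill terms in a decomposition, but the non-pretentiousness hypothesis is already the input to case (i) of the structure theorem, which directly gives vanishing without any intermediate decomposition. If you had correctly recalled the statement of Theorem \ref{Str}(i), the Hal\'asz step (which would in any case rely on the fact that ``does not weakly pretend'' implies $\D(G,\chi;\infty)=\infty$) would be unnecessary. Your heuristic remark about logarithmic averaging suppressing Archimedean twists $n^{it}$ is accurate and explains why Theorem \ref{Str} has no $n^{it}$ while Theorem \ref{main} in the present paper does, but this does not rescue the incorrect decomposition.
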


In particular this establishes the logarithmically averaged Chowla conjecture for odd values of $k$.  This result was also recently used by Frantzikinakis and Host \cite{fh-furstenberg} to control the Furstenberg measure-preserving systems associated to $1$-bounded multiplicative functions, and to establish a version of the logarithmic Sarnak conjecture where the M\"obius function $\mu(n)$ is replaced by a $1$-bounded multiplicative function $g(n)$ and the topological dynamical system involved is assumed to be uniquely ergodic.

Theorem \ref{lae} was deduced from a more general structural statement about the correlation sequence $a \mapsto \lim_{X\to \infty}\E_{n \leq X}^{\log} g_1(n+ ah_1) \cdots g_k(n+ah_k)$ for $1$-bounded multiplicative functions $g_1,\dots,g_k$, where one now permits the product $g_1\cdots g_k$ to weakly pretend to be a Dirichlet character.  Here one runs into the technical difficulty that the asymptotic limits $\lim_{X\to \infty}\E_{n \leq X}^{\log}$ are not known \emph{a priori} to exist.  To get around this difficulty, the device of \emph{generalised limit functionals}\footnote{Alternatively, one could employ ultrafilter limits, or pass to subsequences in which all limits of interest exist.  The latter approach is for instance the one adopted in \cite{frantz-2}, \cite{fh-sarnak}, \cite{fh-furstenberg}.} was employed.  By a generalised limit functional we mean a bounded linear functional $\lim^*_{X \to \infty}: \ell^\infty(\N) \to \C$ which agrees with the ordinary limit functional $\lim_{X \to \infty}$ on convergent sequences, maps non-negative sequences to non-negative numbers, and which obeys the bound
$$ |\lim^*_{X \to \infty} f(X)| \leq \limsup_{X \to \infty} |f(n)|$$
for all bounded sequences $f$.  As is well known, the existence of such generalised limits follows from the Hahn-Banach theorem.  With these notations, we proved in \cite{tt} the following.

\begin{theorem}[Structure of logarithmically averaged correlation sequences]\label{Str}\cite[Theorem 1.1]{tt}
Let $k \geq 1$, and let $h_1,\dots,h_k$ be integers and $g_1,\dots, g_k:\N\to \Disk$ be $1$-bounded multiplicative functions.  Let $\lim^*_{X \to \infty}$ be a generalised limit functional.  Let $f: \Z \to \Disk$ denote the function
\begin{align}\label{eq18}
 f(a) \coloneqq \lim^{*}_{X\to \infty}\E_{n\leq X}^{\log} g_1(n+ah_1) \cdots g_k(n+ah_k).
\end{align}
\begin{itemize}
\item[(i)]  If the product $g_1 \cdots g_k$ does not weakly pretend to be a Dirichlet character, then $f$ is identically zero.
\item[(ii)]  If instead the product $g_1 \cdots g_k$ weakly pretends to be a Dirichlet character $\chi$, then $f$ is the uniform limit of periodic functions $F_i$, each of which is $\chi$-\textnormal{isotypic} in the sense that $F_i(ab) = F_i(a) \chi(b)$ whenever $a$ is an integer and $b$ is an integer coprime to the periods of $F_i$ and $\chi$.
\end{itemize}
\end{theorem}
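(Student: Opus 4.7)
The plan is to derive an approximate multiplicative functional equation for $f$ in the dilation variable $a$ and then apply pretentious analytic number theory to split into the two cases. Write $G \coloneqq g_1 \cdots g_k$ and $F_a(n) \coloneqq g_1(n+ah_1) \cdots g_k(n+ah_k)$, so that $f(a) = \lim^{*}_{X \to \infty} \E_{n \leq X}^{\log} F_a(n)$. The central identity I would establish is
$$ f(ap) = G(p) f(a) + o(1), $$
valid as $p \to \infty$ through primes in a set of positive relative logarithmic density. This would be obtained by the entropy decrement / Matom\"aki--Radziwi{\l}{\l} machinery from \cite{tao}: first prove a dilation-stability bound $\E_{n \leq X}^{\log} F_{ap}(n) \approx \E_{n \leq X/p}^{\log} F_{ap}(pn)$ for most primes $p \in [P, P^{1+o(1)}]$, and then use the multiplicativity identity $g_i(pm) = g_i(p) g_i(m)$ (valid when $p \nmid m$), combined with a sieve-theoretic treatment of the exceptional locus $\{p \mid n + a h_i\}$, to rewrite $F_{ap}(pn) = G(p) F_a(n) + \text{gcd error}$; the gcd error contributes negligibly after logarithmic averaging. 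Extending the entropy decrement to $k$-fold correlations with arbitrary shifts $h_1,\dots,h_k$ is the main technical obstacle, requiring a careful tracking of the entropy parameter as a function of both $p$ and the shifts.

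With the functional equation in hand, I would handle part (i) as follows. Suppose $G$ does not weakly pretend to any Dirichlet character. Twisting the functional equation by $\overline{G(p)}$ and averaging over primes $p$ logarithmically yields $\E_{p \leq P}^{\log} |G(p)|^2 \cdot f(a) \approx \E_{p \leq P}^{\log} \overline{G(p)} f(ap)$. Expanding $f(ap)$ via its definition and exchanging the order of averaging converts the right-hand side into a correlation of $\overline{G(p)}$ with the shifted multiplicative function $\prod_i g_i(n + ap h_i)$; applying a K\'atai--Bourgain--Sarnak--Ziegler type orthogonality criterion, or a direct Hal\'asz-type estimate in Granville--Soundararajan form, shows that this correlation is $o(1)$ when $G$ is non-pretentious. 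The left-hand side is bounded below in modulus by a positive constant times $|f(a)|$ (after handling the sparse set where $|G(p)|$ is small by perturbing to a completely multiplicative model), so we conclude $f(a) = 0$ for every $a$.

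For part (ii), suppose $G$ weakly pretends to the Dirichlet character $\chi$. Then the functional equation becomes $f(ap) \approx \chi(p) f(a)$ for most primes $p$. I would construct approximants $F_i$ by computing, for each modulus $q_i \to \infty$ divisible by the conductor of $\chi$, the averaged value $F_i(r) \coloneqq \E_{a \in I_i, a \equiv r \pmod{q_i}} f(a)$ on each residue class $r$, where $I_i$ is a long interval; the functional equation then propagates the $\chi$-isotypic transformation law $F_i(rb) = F_i(r) \chi(b)$ for $b$ coprime to $q_i$, via the fact that primes in each reduced residue class modulo $q_i$ are equidistributed. The uniform convergence $\|F_i - f\|_\infty \to 0$ (as opposed to mere mean-square convergence) is the principal subtlety, and would require a Matom\"aki--Radziwi{\l}{\l}-type short-interval control to rule out sparse windows on which $f$ deviates substantially from its averaged values.
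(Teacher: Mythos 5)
Your proposal correctly identifies the entropy decrement input and the resulting approximate functional equation $f(ap)\approx G(p)\,f(a)$ for most $p$, which is indeed the starting point of the proof in \cite{tt}, mirrored here by Proposition \ref{dollop} and Corollary \ref{mango}. But the treatment of part (i) has a genuine gap. The missing ingredient is the structural decomposition of the correlation sequence $a\mapsto f(a)$: the actual proof invokes the ergodic-theoretic classification of multiple correlation sequences (Leibman \cite{leibman}, Le \cite{le}) to write $f = f_0 + g$ with $f_0$ periodic and $g$ essentially a sum of \emph{irrational nilcharacters} (Proposition \ref{pe}, transcribing \cite[Corollary 4.6, Proposition 5.6, Lemma 5.8]{tt}). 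After iterating the functional equation to obtain $f(ap_1p_2)\approx G(p_1)G(p_2)f(a)$, it is a \emph{bilinear} estimate for irrational nilcharacters that kills $g$, not a Hal\'asz or K\'atai--Bourgain--Sarnak--Ziegler criterion applied directly. In your version, after expanding $f(ap)$ and exchanging averages you face $\E_n^{\log}\E_p^{\log}\overline{G(p)}\prod_i g_i(n+aph_i)$; for each fixed $n$ the inner average $\E_p^{\log}\overline{G(p)}\,b_n(p)$ has $b_n$ non-multiplicative in $p$ and carries no usable bilinear structure of the BSZ type, so non-pretentiousness of $G$ alone does not deliver the required cancellation. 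The paper instead runs a contrapositive argument: it shows that $f\not\equiv 0$ \emph{forces} $G$ to weakly pretend to be a Dirichlet character, by reducing to the periodic component $f_0$, deriving an approximate quasimorphism on $(\Z/q\Z)^\times$, and applying a cocycle-stability lemma (Lemma \ref{dirs}); part (i) then holds vacuously.

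For part (ii), your residue-class averaging heuristic does not obviously give uniform (rather than mean-square) approximation: the functional equation holds only ``on average'' over $p$, with errors that are themselves averaged, so the proposed $F_i$ could deviate from $f$ on sparse sets of $a$, and there is no clear mechanism to exclude this. The paper avoids the difficulty by building the approximants directly from $f_0$ twisted by the character $\chi$ extracted via the quasimorphism argument (the functions $\tilde f_\eps$), and uniform convergence then follows by comparing approximants across error levels $\eps$ (as in \eqref{ttfa}), with no Matom\"aki--Radziwi{\l}{\l}-type short-interval input needed at that stage. Finally, you identify ``extending the entropy decrement to $k$-fold correlations'' as the main technical obstacle, but this extension is already established in \cite[Theorem 3.6]{tt} (cf.\ Proposition \ref{eda}); the genuine obstacles are the nilsequence structure theory and the quasimorphism/cocycle argument, neither of which appears in your outline.
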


Among other things, Theorem \ref{Str} yields Theorem \ref{lae} as a direct corollary.  Theorem \ref{lae} in turn can be used to establish various results about the distribution of consecutive values of $1$-bounded multiplicative functions; to give just one example, in \cite[Corollary 7.2]{tt} it was used to show that every sign pattern in $\{-1,+1\}^3$ occurred with logarithmic density $1/8$ amongst the Liouville sign patterns $(\lambda(n), \lambda(n+1), \lambda(n+2))$.

\subsection{From logarithmic averages to almost all ordinary averages}

It would be desirable if many of the above results for logarithmically averaged correlations such as $\E_{n \leq X}^{\log} g_1(n+h_1) \cdots g_k(n+h_k)$ could be extended to their unweighted counterparts such as $\E_{n \leq X} g_1(n+h_1) \cdots g_k(n+h_k)$.   However, such extensions cannot be automatic, since for instance the logarithmic averages $\E_{n \leq X}^{\log} n^{it}$ converge to $0$ for $t \neq 0$, but the unweighted averages $\E_{n \leq X} n^{it}$ diverge.  Similarly, the statement $\E_{n \leq X}^{\log} \lambda(n)=o(1)$ has a short and simple elementary proof\footnote{One can for example prove this by writing $\E_{n \leq X}^{\log} \lambda(n)=-\mathbb{E}_{p\leq y}\E_{n \leq X}^{\log} \lambda(n)p1_{p\mid n}+o_{y\to \infty}(1)$, and then using the Turán-Kubilius inequality to get rid of the $p1_{p\mid n}$ factor; we leave the details to the interested reader.}, whereas the unweighted analogue $\E_{n \leq X} \lambda(n)=o(1)$ is equivalent to the prime number theorem and its proofs are more involved. Moreover, one can show\footnote{More generally, one can use partial summation to show that, for any bounded real-valued sequence $a:\mathbb{N}\to \mathbb{R}$, if $\lim_{X\to \infty}\mathbb{E}_{n\leq X}^{\log}a(n)=\alpha$, then there exists an increasing sequence $X_i$ such that $\lim_{i\to \infty}\mathbb{E}_{n\leq X_i}a(n)=\alpha$. In particular, if the logarithmic Elliott conjecture holds, then the ordinary Elliott conjecture also holds in the case of real-valued functions along some subsequence of scales (which may depend on the functions involved).} that if, for example, the correlation limit $\lim_{X\to \infty} \mathbb{E}_{n\leq X}\lambda(n)\lambda(n+1)$ exists, then it has to be equal to $0$, which means that proving the mere existence of the limit captures the difficulty in the two-point unweighted Chowla conjecture.

Nevertheless, there are some partial results of this type in which control on logarithmic averages can be converted to control on unweighted averages for a subsequence of scales $X$.  For instance, in \cite{gkl} it is shown using ergodic theory techniques that if the logarithmically averaged Chowla conjecture holds for all $k$, then there exists an increasing sequence of scales $X_i$ such that the Chowla conjecture for all $k$ holds for $X$ restricted to these scales. This was refined in a blog post \cite{tao-blog} of the first author, where it was shown by an application of the second moment method that if the logarithmically averaged Chowla conjecture held for some even order $2k$, then the Chowla conjecture for order $k$ would hold for all scales $X$ outside of an exceptional set ${\mathcal X}\subset \mathbb{N}$ of logarithmic density zero, by which we mean that
$$ \lim_{X\to \infty}\E^{\log}_{n \leq X} 1_{\mathcal X}(n) = 0.$$
Unfortunately, as the only even number for which the logarithmically averaged Chowla conjecture is currently known to hold is $k=2$, this only recovers (for almost all scales) the $k=1$ case of the unweighted Chowla conjecture, which was already known from the prime number theorem.

At present, the restriction to logarithmic averaging in many of the above results is needed largely because it supplies (via the ``entropy decrement argument'') a certain approximate dilation invariance, which roughly speaking asserts the approximate identity
$$ g_1(p)\cdots g_k(p)\E_{n \leq X}^{\log} g_1(n+h_1) \cdots g_k(n+h_k) \approx \E_{n \leq X}^{\log} g_1(n+ph_1) \cdots g_k(n+ph_k)$$
for ``most'' primes $p$, and for extremely large values of $X$; see for instance \cite[Theorem 3.2]{fh-furstenberg} for a precise form of this statement, with a proof essentially provided in \cite[\S 3]{tt}.  However, an inspection of the entropy decrement argument reveals that it also provides an analogous identity for unweighted averages, namely that
\begin{equation}\label{nxp}
g_1(p)\cdots g_k(p) \E_{n \leq X} g_1(n+h_1) \cdots g_k(n+h_k) \approx \E_{n \leq X/p} g_1(n+ph_1) \cdots g_k(n+ph_k)
\end{equation}
for ``most'' primes $p$, and ``most'' extremely large values of $X$; see Proposition \ref{dollop} for a precise statement.  By using this form of the entropy decrement argument, we are able to obtain the following analogue of Theorem \ref{Str} for unweighted averages, which is the main technical result of our paper and is proven in Section \ref{main-sec}.

\begin{theorem}[Structure of unweighted correlation sequences]\label{main}  Let $k \geq 1$, and let $h_1,\dots,h_k$ be integers and $g_1,\dots, g_k:\N\to \Disk$ be $1$-bounded multiplicative functions.  Let $\lim^*_{X \to \infty}$ be a generalised limit functional.  
For each real number $d>0$, let $f_d: \Z \to \Disk$ denote the function
\begin{equation}\label{fda-def}
 f_d(a) \coloneqq \lim^*_{X \to \infty} \E_{n \leq X/d} g_1(n+ah_1) \cdots g_k(n+ah_k).
\end{equation}
\begin{itemize}
\item[(i)]  If the product $g_1 \cdots g_k$ does not weakly pretend to be any twisted Dirichlet character $n \mapsto \chi(n)n^{it}$, then 
$$\lim_{X\to \infty} \E^{\log\log}_{d \leq X} |f_d(a)| = 0$$
for all integers $a$.
\item[(ii)]  If instead the product $g_1 \cdots g_k$ weakly pretends to be a twisted Dirichlet character $n \mapsto \chi(n)n^{it}$, then there exists a function $f: \Z \to \Disk$ such that
\begin{equation}\label{hd}
 \lim_{X\to \infty}\E^{\log\log}_{d \leq X} |f_d(a) - f(a) d^{-it}| = 0
\end{equation}
for all integers $a$.  Furthermore, $f$ is the uniform limit of $\chi$-isotypic\footnote{That is, we have $F_i(ab)=F_i(a)\chi(b)$ for any integers $a$ and $b$ with $b$ coprime to the periods of $F_i$ and $\chi$.} periodic functions $F_i$.
\end{itemize}
\end{theorem}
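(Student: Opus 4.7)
The strategy is to reduce Theorem~\ref{main} to Theorem~\ref{Str} via the unweighted dilation identity~\eqref{nxp} supplied by the entropy decrement argument (Proposition~\ref{dollop}). First, absorb the potential twist: in case~(ii), set $\tilde g_k(n) \coloneqq g_k(n) n^{-it}$ (still $1$-bounded and completely multiplicative) and $\tilde g_j \coloneqq g_j$ for $j < k$, so that $\tilde g_1 \cdots \tilde g_k = (g_1 \cdots g_k) n^{-it}$ weakly pretends to $\chi$ with no further twist. Setting $\tilde G_a(n) \coloneqq \tilde g_1(n+ah_1) \cdots \tilde g_k(n+ah_k)$ and $\tilde f(a) \coloneqq \lim^{*}_{Y \to \infty} \E_{n \leq Y}^{\log} \tilde G_a(n)$, Theorem~\ref{Str}(ii) gives that $\tilde f$ is a uniform limit of $\chi$-isotypic periodic functions. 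In case~(i), running this construction for every $t \in \R$, the hypothesis forces $\tilde g_1 \cdots \tilde g_k$ to pretend to no character for any choice of $t$, so Theorem~\ref{Str}(i) makes each corresponding $\tilde f$ vanish.

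The core computation (in case~(ii)) uses $(n+ah_k)^{it} = n^{it}(1 + O_a(1/n))$ to rewrite $\E_{n \leq Y/d} G_a(n) = \E_{n \leq Y/d} n^{it} \tilde G_a(n) + o(1)$, and then Abel summation on $\sum_{n \leq N} \tilde G_a(n)/n \sim \tilde f(a) \log N$ yields $\E_{n \leq N} n^{it} \tilde G_a(n) = N^{it} \tilde f(a)/(1+it) + o(\log N)$. Applying $\E_{d \leq X}^{\log\log} d^{it}$ followed by $\lim^{*}_Y$, the main term collapses to
$$\lim_{X \to \infty} \E_{d \leq X}^{\log\log} d^{it} f_d(a) = C \tilde f(a) \eqqcolon f(a),$$
where $C$ is an absolute constant absorbing $\lim^{*}_Y Y^{it}/(1+it)$. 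A parallel second-moment computation on $|f_d(a)|^2 = f_d(a) \overline{f_d(a)}$---in which the $Y^{it} \overline{Y^{it}} = 1$ factors cancel cleanly---gives $\lim_{X \to \infty} \E_{d \leq X}^{\log\log} |f_d(a)|^2 = |f(a)|^2$. The variance expansion
$$\E_{d \leq X}^{\log\log} |f_d(a) - f(a) d^{-it}|^2 = \E_{d \leq X}^{\log\log} |f_d(a)|^2 - 2\mathrm{Re}\bigl(\overline{f(a)}\, \E_{d \leq X}^{\log\log} d^{it} f_d(a)\bigr) + |f(a)|^2$$
then tends to $0$, and~\eqref{hd} follows from this $L^2$-vanishing by Cauchy--Schwarz. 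For case~(i), every $t$-twisted $\tilde f$ vanishes, so the same second-moment computation applied to the doubled system $(\tilde g_j, \overline{\tilde g_j})$ via Theorem~\ref{Str} forces $\E_{d \leq X}^{\log\log} |f_d(a)|^2 \to 0$, giving the required conclusion.

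The main technical obstacle is twofold. First, the dilation identity~\eqref{nxp} from Proposition~\ref{dollop} holds only for most primes $p$ and most scales $X$; its use inside both the $\log\log$-average in $d$ and the generalised limit in $Y$ requires careful tracking of the exceptional sets so that they do not spoil the main term in the Abel computation. Second, the $Y^{it}$ factor produced by Abel summation has no intrinsic value under $\lim^{*}_Y$, but its symmetric appearance (as $Y^{it}$ in the first moment and $|Y^{it}|^2 = 1$ in the second moment) ensures it cancels from the final $d^{-it}$-structure, leaving only the intrinsic function $\tilde f(a)$ supplied by Theorem~\ref{Str}.
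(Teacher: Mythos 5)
Your proposal has a genuine gap at the Abel summation step, and it is exactly the gap that the paper's introduction explicitly warns cannot be bridged. You write that Abel summation on $\sum_{n \leq N} \tilde G_a(n)/n \sim \tilde f(a)\log N$ yields $\E_{n \leq N} n^{it}\tilde G_a(n) = N^{it}\tilde f(a)/(1+it) + o(\log N)$, and then you treat this as giving useful asymptotic information. But $\E_{n \leq N} n^{it}\tilde G_a(n)$ is a bounded quantity, so an error of $o(\log N)$ is vacuous --- it is not $o(1)$ and dominates everything. Concretely, writing $A(u) := \sum_{n\leq u}\tilde G_a(n)/n = \tilde f(a)\log u + E(u)$ with $E(u)=o(\log u)$, Abel summation gives $\E_{n\leq N} n^{it}\tilde G_a(n) = \tilde f(a)N^{it}/(1+it) + E(N)N^{it} + O\bigl(\frac{1}{N}\int_1^N |E(u)|\,du\bigr) + o(1)$, and both of the $E$-terms are only $o(\log N)$. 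To shrink these errors to $o(1)$ one would need the much stronger rate $E(N)=O(1/\log N)$, i.e.\ $\E_{n\leq N}^{\log}\tilde G_a(n) = \tilde f(a) + O(1/\log N)$, which is not available. This is precisely the phenomenon that the paper points to with the example $\E_{n\leq X}^{\log}n^{it}\to 0$ while $\E_{n\leq X}n^{it}$ diverges: knowing logarithmic averages, even with an ordinary limit, tells you essentially nothing about unweighted averages.

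There is a second, compounding difficulty: Theorem \ref{Str} only characterises the \emph{generalised limit} $\tilde f(a)=\lim^*_Y\E_{n\leq Y}^{\log}\tilde G_a(n)$; it does not assert that $\E_{n\leq Y}^{\log}\tilde G_a(n)$ actually converges as $Y\to\infty$, and a priori this quantity can oscillate. Your first-moment computation requires the pointwise asymptotic $\sum_{n\leq N}\tilde G_a(n)/n \sim \tilde f(a)\log N$ as $N\to\infty$, which is a strictly stronger statement. Your paragraph on ``careful tracking of the exceptional sets'' gestures at invoking Proposition \ref{dollop} to repair this, but the entropy decrement argument supplies a completely different kind of information --- the approximate multiplicativity $f_{dp}(a)G(p)\approx f_d(ap)$ for most $p$ --- and there is no route from this to the rate-of-convergence statement the Abel step needs. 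The second-moment calculation inherits the same problem, since $|f_d(a)|^2$ is again a generalised limit of an unweighted average and cannot be evaluated via Theorem \ref{Str}.

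For contrast, the paper does not deduce Theorem \ref{main} from Theorem \ref{Str} at all. Instead it runs the entropy decrement argument directly on the unweighted correlations (Propositions \ref{eda} and \ref{dollop}), iterates to obtain $f_{p_1p_2}(a)G(p_1)G(p_2)\approx f_1(ap_1p_2)$ (Corollary \ref{mango}), decomposes $f_1$ into a periodic piece $f_{1,0}$ plus a nilsequence error with bilinear cancellation (Proposition \ref{pe}), and then extracts approximate quasimorphism equations on $(\Z/q\Z)^\times$ and on $\R^+$ which are straightened via the stability lemmas (Lemmas \ref{dirs} and \ref{dira}) to produce the character $\chi$ and the exponent $t$. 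The slowly-varying bound \eqref{fd1d2} on $d\mapsto f_d(a)$ and Lemma \ref{chug} relating $\E^{\log\log}_{d}$ to $\E^{\log}_p$ play the role of the Archimedean glue that your Abel argument was hoping to provide, but in a way that circumvents the converse-Abel obstruction entirely.
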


We have defined $f_d$ for all real numbers $d>0$ for technical reasons, but we will primarily be interested in the behaviour of $f_d$ for natural numbers $d$; for instance, the averages $\lim_{X\to \infty}\E^{\log\log}_{d \leq X}$ appearing in the above theorem are restricted to this case.

Roughly speaking, the logarithmic correlation sequence $f(a)$ appearing in Theorem \ref{Str} is analogous to the average $\lim_{X\to \infty}\E^{\log\log}_{d \leq X} f_d(a)$ of the sequences appearing here (ignoring for this discussion the question of whether the limits exist).  These averages vanish when $t \neq 0$ in Theorem \ref{main}, and one basically recovers a form of Theorem \ref{Str}; but, as the simple example of averaging the single $1$-bounded multiplicative function $n \mapsto n^{it}$ already shows, in the $t \neq 0$ case it is possible for the $f_d(a)$ to be non-zero while the logarithmically averaged counterpart $f(a)$ vanishes.

By combining Theorem \ref{main} with a simple application of the Hardy--Littlewood maximal inequality, we can obtain several new cases of the unweighted Elliott and Chowla conjectures at almost all scales, as follows.

\begin{corollary}[Some cases of the unweighted Elliott conjecture at almost all scales]\label{elliott-1}  Let $k \geq 1$, and let $g_1,\dots, g_k:\N\to \Disk$ be $1$-bounded multiplicative functions.  Suppose that the product $g_1 \cdots g_k$ does not weakly pretend to be any twisted Dirichlet character $n \mapsto \chi(n) n^{it}$.  
\begin{itemize}
\item[(i)]  For any $h_1,\dots,h_k \in \Z$ and $\eps>0$, one has
$$ |\E_{n \leq X} g_1(n+h_1) \cdots g_k(n+h_k)| \leq \eps $$
for all natural numbers $X$ outside of a set $\mathcal{X}_{\varepsilon}$ of logarithmic Banach density zero, in the sense that
\begin{align}\label{eq6}
\lim_{\omega \to \infty} \sup_{X \geq\omega} \E^{\log}_{X/\omega \leq n \leq X} 1_{\mathcal{X}_{\varepsilon}}(n) = 0.
\end{align}
\item[(ii)]  There is a set ${\mathcal X}_0$ of logarithmic density zero, such that
$$ \lim_{X \to \infty; X \not \in {\mathcal X}_0} \E_{n \leq X} g_1(n+h_1) \cdots g_k(n+h_k)  = 0$$
for all $h_1,\dots,h_k \in \Z$.
\end{itemize}
\end{corollary}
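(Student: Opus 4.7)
The plan is to combine Theorem~\ref{main}(i) with a Hardy--Littlewood-type maximal inequality on $(0,\infty)$ equipped with the logarithmic measure $dx/x$, in order to convert control on log-log averages of the generalized-limit quantities $f_d(1)$ into pointwise control of the correlation $C(X) := \E_{n \leq X} g_1(n+h_1) \cdots g_k(n+h_k)$ at almost all scales. Set $G(n) := g_1(n+h_1)\cdots g_k(n+h_k)$ throughout.

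For part~(i), I would argue by contradiction. Suppose the set $\mathcal{X}_\eps := \{X \in \N : |C(X)| > \eps\}$ has positive logarithmic Banach density $\geq \delta > 0$, so that there exist $\omega_j \to \infty$ and $X_j \geq \omega_j$ with $\E^{\log}_{X_j/\omega_j \leq n \leq X_j} 1_{\mathcal{X}_\eps}(n) \geq \delta$ for every $j$. By Cantor diagonalization, pass to a subsequence so that both $C(X_j/d)$ and the indicator $1_{\lfloor X_j/d \rfloor \in \mathcal{X}_\eps}$ converge as $j \to \infty$ for every $d \in \N$, with limits $\phi(d) \in \Disk$ and $\psi(d) \in \{0,1\}$. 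Choose a generalized limit functional $\lim^*_{X \to \infty}$ compatible with this subsequence (for example, an ultrafilter limit along a set containing the $X_j$), so that the function $f_d(1)$ from Theorem~\ref{main} equals $\phi(d)$. Observe that $|\phi(d)| \geq \eps$ whenever $\psi(d) = 1$.

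Via the change of variables $n = \lfloor X_j/d \rfloor$ (so that $d$ running over $[1,\omega_j]$ corresponds to $n$ running over a subset of $[X_j/\omega_j, X_j]$), the density hypothesis translates to a lower bound on $\sum_{d \leq \omega_j} \frac{\psi(d)}{d \log(1+d)}$. A naive calculation yields only a constant lower bound $\gtrsim \delta \eps$, which after dividing by $\log\log \omega_j$ is compatible with Theorem~\ref{main}(i); to produce a contradiction we need the sum to grow like $\log\log \omega_j$. Here the Hardy--Littlewood maximal inequality enters: the \emph{Banach} density hypothesis supplies density $\geq \delta$ at \emph{every} scale $\omega$ (not just one), and a dyadic pigeonhole / Vitali argument based on the maximal inequality lets us select witnesses $X_j$ at which $\mathcal{X}_\eps$ has density $\gtrsim \delta$ in many dyadic sub-windows $[X_j/2^{i+1}, X_j/2^i]$ simultaneously. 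Each such sub-window contributes $\gtrsim \delta/i$ to the log-log sum, so summing across the sub-windows gives the desired lower bound of order $\delta \log\log \omega_j$, yielding $\E^{\log\log}_{d \leq \omega_j} |\phi(d)| \gtrsim \eps\delta$ for arbitrarily large $\omega_j$ and contradicting Theorem~\ref{main}(i).

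For part~(ii), enumerate a countable dense collection $\{(\mathbf{h}^{(i)}, \eps_i) : i \in \N\} \subset \Z^k \times (0,1)$ with $\eps_i \to 0$, and let $\mathcal{X}_i$ be the logarithmic Banach density zero set supplied by part~(i). Using the quantitative decay $\sup_X \E^{\log}_{X/\omega \leq n \leq X} 1_{\mathcal{X}_i}(n) \to 0$ as $\omega \to \infty$, choose thresholds $T_i \to \infty$ rapidly enough that $\mathcal{X}_0 := \bigcup_i (\mathcal{X}_i \cap [T_i, \infty))$ has logarithmic density zero; outside $\mathcal{X}_0$, the correlation for any fixed shift $\mathbf{h}$ tends to zero, by approximating $\mathbf{h}$ in the dense collection. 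The main obstacle is the Hardy--Littlewood step in part~(i): without using the maximal inequality to aggregate the density of $\mathcal{X}_\eps$ across many dyadic sub-scales, the single-window bound only yields a constant that is too small to contradict the $\log\log D$-normalized conclusion of Theorem~\ref{main}(i).
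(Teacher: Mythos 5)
Your overall strategy matches the paper's: argue by contradiction, use a Hardy--Littlewood-type maximal inequality on the logarithmic measure to aggregate the Banach density hypothesis across scales, and then invoke Theorem~\ref{main}(i) via a generalized limit along the witness scales to obtain $\E^{\log\log}_{d\in\N}|f_d(1)|=0$, contradicting the lower bound you extracted. You also correctly identify the central difficulty: a single window only yields a constant lower bound on the $\E^{\log\log}$ average of $|f_d(1)|$, whereas one needs a lower bound of order $\log\log\omega$ before normalizing. Part~(ii) is also fine, and the threshold construction $\mathcal{X}_0=\bigcup_i(\mathcal{X}_i\cap[T_i,\infty))$ works, being a minor variant of the paper's diagonalization.

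However, there is a genuine gap in the maximal-inequality step. You claim that the Banach density hypothesis plus a ``dyadic pigeonhole / Vitali argument'' produces a witness $X_j$ at which $\mathcal{X}_\eps$ has density $\gtrsim\delta$ in \emph{many dyadic sub-windows} $[X_j/2^{i+1},X_j/2^i]$ simultaneously. This does not follow from the one-sided maximal inequality (at least not for small $\delta$). What the maximal inequality gives you, after applying it to the indicator $a_i$ of the \emph{good} set (the complement of $\mathcal{X}_\eps$), is a scale $s_i$ close to $\log X_i$ such that $\int_{s_i-r}^{s_i}a_i\leq(1-\delta)^{1/2}r$ for \emph{all} $r>0$, i.e.\ a multiplicative density point of $\mathcal{X}_\eps$. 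This controls the measure of the bad set on all intervals $(s_i-r,s_i]$, but \emph{not} on individual dyadic annuli $(s_i-2^{l+1},s_i-2^l]$: subtracting the estimates for consecutive radii only yields a positive lower bound on an annulus when $(1-(1-\delta)^{1/2})>1/2$, i.e.\ $\delta>3/4$. For generic small $\delta$, it is entirely possible for the bad set to be absent from a positive proportion of the dyadic annuli while still being a density point. The correct aggregation is to feed the interval estimate directly into the integral $\int_{s_i-\log D}^{s_i-\log\log D}a_i(s)\,\frac{ds}{s_i-s}$ via Fubini (or, equivalently, Abel summation of $\sum_{d}\psi(d)/(d\log d)$ against the partial sums $\sum_{d'\leq d}\psi(d')/d'$), which is exactly the step the paper carries out; this is what actually produces the $\delta\cdot\log\log D$ discrepancy. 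As written, your annuli pigeonhole is not a valid route to that lower bound, and you should also state explicitly that the generalized limit must be taken along the \emph{shifted} scales $\lfloor\exp(s_i)\rfloor$ rather than the original witnesses $X_i$; otherwise the bad set may be concentrated at one end of the window and the argument fails.
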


\begin{remark}
We note that Corollary \ref{elliott-1} can be generalised to the case of dilated correlations
$$ \E_{n \leq X} g_1(q_1n+h_1) \cdots g_k(q_kn+h_k), $$
where $q_1,\ldots, q_k\in \mathbb{N}$. To see this, one applies exactly the same trick related to Dirichlet character expansions as in \cite[Appendix A]{tt}. Similarly, Corollary \ref{elliott-2} below generalises to the dilated case. We leave the details to the interested reader.
\end{remark}

\begin{remark}
We see by partial summation that if $f:\mathbb{N}\to \mathbb{C}$ is any bounded function such that for every $\varepsilon>0$ we have $\displaystyle |\lim_{X\to \infty; X\not \in \mathcal{X}_{\varepsilon}}\mathbb{E}_{n\leq X}f(n)|\leq \varepsilon$ for some set $\mathcal{X}_{\varepsilon}\subset \mathbb{N}$ of logarithmic Banach density $0$, then we also have the logarithmic correlation result $\displaystyle\limsup_{X\to \infty} |\mathbb{E}_{X/\omega(X)\leq n\leq X}^{\log}f(n)|\ll \varepsilon$ for any function $1\leq \omega(X)\leq X$ tending to infinity. Thus Corollary \ref{elliott-1} is a strengthening of our earlier result \cite[Corollary 1.6]{tt} on logarithmic correlation sequences. Similarly, Corollary \ref{elliott-2} below is a strengthening of \cite[Corollary 1.5]{tao}.
\end{remark}

\begin{remark}
The logarithmic density (or logarithmic Banach density) appearing in Corollaries \ref{elliott-1} and \ref{elliott-2} is the right density to consider in this problem. Namely, if one could show that the set $\mathcal{X}_0$ has \emph{asymptotic} density $0$, then $[1,\infty)\setminus \mathcal{X}_0$ would intersect every interval $[x,(1+\varepsilon)x]$ for all large $x$, which would easily imply (together with \eqref{eq13} below) that the unweighted correlation converges to zero without any exceptional scales.
\end{remark}

\begin{remark}  The twisted Dirichlet characters $\chi(n) n^{it}$ appear both in Conjecture \ref{elliott-conj} and in Theorems \ref{Str}, \ref{main}.  However, there is an interesting distinction as to how they appear; in Conjecture \ref{elliott-conj}, $t$ is allowed to be quite large (as large as $X$) and $\chi(n) n^{it}$ is associated to just a single multiplicative function $g_j$, while in Theorems \ref{Str}, \ref{main}, the quantity $t$ is independent of $X$ and is now associated to the product $g_1 \dots g_k$.  The dependence of $t$ on $X$ in Conjecture \ref{elliott-conj}(i) is necessary\footnote{In the case of the \emph{logarithmically averaged} Conjecture \ref{elliott-conj}(ii), in contrast, \eqref{suptx} might not be a necessary assumption, since the sequence of bad scales constructed in \cite[Theorem B.1]{mrt} is sparse and thus does not influence the behaviour of logarithmic averages. We thank the referee for this remark.}, as is shown in \cite{mrt}; roughly speaking, the individual $g_j$ can oscillate like $n^{it_j}$ for various large $t_j$ in such a fashion that these oscillations largely cancel and produce non-trivial correlations in the product $g_1(n+h_1) \dots g_k(n+h_k)$.  Meanwhile, Theorem \ref{main} asserts in some sense that the shifted product $g_1(n+h_1) \dots g_k(n+h_k)$ oscillates ``similarly to'' the unshifted product $g_1(n) \dots g_k(n)$, so in particular if the latter began oscillating like $n^{it}$ for increasingly large values of $t$ then the former product should exhibit substantial cancellation.
\end{remark}

The proof of Corollary \ref{elliott-1} is found in Section \ref{corollaries-sec}. So far, all of our results have concerned correlations where the product of the multiplicative functions involved is non-pretentious. In the case of two-point correlations, however, we can prove Corollary \ref{elliott-1} under the mere assumption that one of the multiplicative functions involved is non-pretentious, thus upgrading the logarithmic two-point Elliott conjecture in \cite{tao} to an unweighted version at almost all scales.

\begin{corollary}[The binary unweighted Elliott conjecture at almost all scales]\label{elliott-2}  Let $g_1, g_2:\N\to \Disk$ be $1$-bounded multiplicative functions, such that there exists $j\in \{1,2\}$ for which \eqref{suptx} holds as $X \to \infty$ for every Dirichlet character $\chi$.
\begin{itemize}
\item[(i)] For any distinct $h_1,h_2 \in \Z$ and $\eps > 0$, one has
$$ |\E_{n \leq X} g_1(n+h_1)  g_2(n+h_2)| \leq \eps $$
for all natural numbers $X$ outside of a set $\mathcal{X}_{\varepsilon}$ of logarithmic Banach density zero (in the sense of \eqref{eq6}).
\item[(ii)]  There is a set ${\mathcal X}_0$ of logarithmic density zero such that
$$ \lim_{X \to \infty; X \not \in {\mathcal X}_0} \E_{n \leq X} g_1(n+h_1)  g_2(n+h_2)  = 0$$
for all distinct $h_1,h_2 \in \Z$.
\end{itemize}
\end{corollary}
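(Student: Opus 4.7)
The plan is to combine Theorem~\ref{main} with Corollary~\ref{elliott-1} and the binary logarithmically averaged Elliott theorem of \cite{tao}. As with Corollary~\ref{elliott-1}, part (ii) follows from part (i) by diagonalisation over $(h_1,h_2)\in\Z^2$, so I focus on (i). Suppose for contradiction that (i) fails, extract a generalised limit $\lim^*_{X\to\infty}$ supported on a set of positive logarithmic Banach density where $|\E_{n\leq X}g_1(n+h_1)g_2(n+h_2)|>\eps$, and define $f_d(a)$ by \eqref{fda-def}. By Theorem~\ref{main} combined with the same Hardy--Littlewood maximal inequality argument used for Corollary~\ref{elliott-1}, it suffices to establish $\E^{\log\log}_{d\leq X}|f_d(a)|\to 0$; this is automatic in case (i) of Theorem~\ref{main}, while in case (ii) it reduces to showing that the structured part $f$ vanishes identically when $g_1g_2$ weakly pretends to some twisted Dirichlet character $\chi(n)n^{it}$.

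I would establish $f\equiv 0$ by splitting on whether $t$ vanishes. When $t\neq 0$, the approximate dilation invariance \eqref{nxp} (in the precise form of Proposition~\ref{dollop}), applied after the rescaling $X\mapsto X/d$ and $h_i\mapsto ah_i$, yields $g_1(p)g_2(p) f_d(a)\approx f_{dp}(ap)$ for almost all primes $p$. Substituting the asymptotic $f_d(a)\approx f(a)d^{-it}$, the weak-pretentiousness identity $g_1(p)g_2(p)\approx \chi(p)p^{it}$ on almost all primes, and the $\chi$-isotypic relation $f(ap)=\chi(p)f(a)$ (valid on a density-one set of primes by uniform approximation of $f$ by $\chi$-isotypic $F_i$), produces $(p^{2it}-1)f(a)\approx 0$ on a positive log-density set of primes. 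Since $\sum_{p\leq X}p^{-2it}=o(X/\log X)$ for $t\neq 0$, $|p^{2it}-1|$ is bounded below on such a set, forcing $f\equiv 0$.

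When $t=0$, so that $g_1g_2$ weakly pretends to $\chi$ and $f_d(a)\to f(a)$ in $\log\log$ average, I would invoke the binary logarithmically averaged Elliott theorem of \cite{tao}: under hypothesis \eqref{suptx},
\[
\lim_{Y\to\infty}\E^{\log}_{n\leq Y}g_1(n+h_1)g_2(n+h_2)=0.
\]
Partial summation gives $\E^{\log}_{n\leq Y}g_1g_2 = \E^{\log}_{d\leq Y}\E_{n\leq Y/d}g_1g_2+o(1)$; taking $\lim^*_Y$ and truncating the $d$-sum outside a bulk window $[D,Y/D]$ (whose complement contributes $O(\log D/\log Y)\to 0$), the $\log\log$-convergence of $f_d$ to $f$ on the bulk identifies the limit with $f(a)$, yielding $f(a)=0$ for every $a$.

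The main obstacle is the $t=0$ step: interchanging $\lim^*_Y$ with a log average $\E^{\log}_{d\leq Y}$ whose upper endpoint coincides with the limit parameter requires the head/tail truncation above together with the $\log\log$-convergence of $f_d$, which is a priori weaker than log convergence. The $t\neq 0$ case is conceptually simpler but requires careful bookkeeping of the $\log\log$ error in $f_d(a)\approx f(a)d^{-it}$ when it is pushed through the dilation identity's change of variables $d\mapsto dp$, where the mismatch between the $\log\log$ weighting on $d$ and the restriction to multiples of $p$ on the image side must be reconciled.
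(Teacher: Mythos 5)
Your reduction of part (ii) to part (i) by diagonalisation matches the paper, and your framing of the contradiction (a generalised limit with $f(1)\neq 0$) is the right start. But the $t\neq 0$ branch of your argument is incorrect: you have the approximate dilation identity backwards. The correct form, from Proposition~\ref{dollop}, is $G(p)\,f_{dp}(a)\approx f_d(ap)$, or equivalently $G(p)\,f_d(a)\approx f_{d/p}(ap)$ (the scale shrinks by $p$ under the change of variables $pn\mapsto n$). You wrote $G(p)\,f_d(a)\approx f_{dp}(ap)$, and the factor of $p^{2it}$ you extract comes entirely from this error. Substituting $f_d(a)\approx f(a)d^{-it}$, $G(p)\approx\chi(p)p^{it}$, and $f(ap)=\chi(p)f(a)$ into the \emph{correct} identity gives $\chi(p)p^{it}f(a)d^{-it}\approx\chi(p)f(a)(d/p)^{-it}=\chi(p)p^{it}f(a)d^{-it}$, an identity satisfied for all $p$; no contradiction arises. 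Indeed, nothing else could be expected: your $t\neq 0$ branch never invokes the hypothesis \eqref{suptx}, so if it worked it would prove that \emph{every} product weakly pretending to $\chi(n)n^{it}$ with $t\neq 0$ has vanishing $f$, which fails (e.g.\ $g_1=g_2=n^{it/2}$). The non-pretentiousness hypothesis must be used, and your $t\neq0$ branch doesn't touch it.

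Your $t=0$ branch is closer in spirit to what the paper does, but the paper avoids the case split altogether and thereby sidesteps the interchange-of-limits issue you flag. From $f(1)\neq 0$, \eqref{hd} gives $\E^{\log\log}_{d\in\N}f_d(1)d^{it}=f(1)\neq 0$; summation by parts upgrades this to $|\E^{\log}_{d\leq D}f_d(1)d^{it}|\gg 1$ along arbitrarily large $D$. Unpacking $f_d(1)$ via \eqref{fda-def}, this produces, for arbitrarily large $X$,
$$\Bigl|\sum_{\log D\leq d\leq D} d^{it}\sum_{cX/d\leq n\leq X/d}g_1(n+h_1)g_2(n+h_2)\Bigr|\gg X\log D.$$
Interchanging the order of summation and evaluating $\sum_{cX/n\leq d\leq X/n}d^{it}=\alpha\,n^{-it}X/n+o(1)$ in closed form converts the archimedean twist $d^{it}$ into a twist $n^{-it}$ on the inner sum, yielding $|\E^{\log}_{X/D\leq n\leq X}(n+h_1)^{-it}g_1(n+h_1)g_2(n+h_2)|\gg 1$. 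This contradicts the two-point logarithmic Elliott theorem of \cite{tao} applied to $n\mapsto n^{-it}g_1(n)$ and $g_2$ (noting that \eqref{suptx} for $g_1$ passes to $n^{-it}g_1(n)$). That single twist absorbs both cases $t=0$ and $t\neq 0$, and it is precisely where the hypothesis \eqref{suptx} enters. I'd suggest discarding the case split, fixing the direction of the dilation identity (it is not needed in the corrected argument), and replacing the head/tail truncation of the $d$-sum by the explicit inner-sum evaluation above, which is what actually produces the archimedean twist.
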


When specialised to the case of the Liouville function, the previous corollaries produce the following almost-all result.

\begin{corollary}[Some cases of the unweighted Chowla conjecture at almost all scales]\label{chow}  There is an exceptional set ${\mathcal X}_0$ of logarithmic density zero, such that
$$ \lim_{X \to \infty; X \not \in {\mathcal X}_0} \E_{n \leq X} \lambda(n+h_1) \cdots \lambda(n+h_k) = 0$$
for all natural numbers $k$ that are either odd or equal to $2$, and for any distinct integers $h_1,\dots,h_k$.  The same result holds if one replaces one or more of the copies of the Liouville function $\lambda$ with the M\"obius function $\mu$.
\end{corollary}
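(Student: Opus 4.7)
My plan is to derive Corollary \ref{chow} from Corollary \ref{elliott-1}(ii) (for odd $k$) and Corollary \ref{elliott-2}(ii) (for $k=2$) after verifying the required non-pretentiousness hypotheses for products of Liouville and M\"obius functions.

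For odd $k$, let each $g_i$ be either $\lambda$ or $\mu$. Since $\lambda(p) = \mu(p) = -1$ for every prime $p$, one has $(g_1 \cdots g_k)(p) = (-1)^k = -1$, and hence
$$\D(g_1\cdots g_k, \chi(n) n^{it}; X)^2 = \sum_{p \leq X} \frac{1 + \mathrm{Re}(\chi(p) p^{it})}{p}.$$
For the principal character $\chi_0$ with $t = 0$ this sum is $\sim 2\log\log X$. For all other pairs $(\chi,t)$, the classical non-vanishing of $L(s,\chi)$ on the line $\mathrm{Re}(s)=1$ (with partial summation) gives $\sum_{p \leq X} \chi(p) p^{it}/p = O_{\chi,t}(1)$, so the distance squared is still $\sim \log\log X$. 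In either case the expression is not $o(\log\log X)$, and so Corollary \ref{elliott-1}(ii) applies and yields an exceptional set of logarithmic density zero valid uniformly for all shifts $h_1,\ldots,h_k$.

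For $k = 2$, I would apply Corollary \ref{elliott-2}(ii) with $g_1, g_2 \in \{\lambda, \mu\}$, taking (say) $g_1 = \lambda$. The required hypothesis is the stronger uniform bound \eqref{suptx}, namely
$$\inf_{|t| \leq X} \D(\lambda, \chi(n) n^{it}; X) \to \infty$$
for every Dirichlet character $\chi$. Using standard zero-free regions for $L(s,\chi)$ (the classical de la Vall\'ee Poussin region for small-conductor $\chi$, Siegel--Walfisz-type bounds for larger conductors) one obtains $\sum_{p \leq X} \chi(p) p^{it}/p = o(\log\log X)$ uniformly for $|t| \leq X$, and combined with $\sum_{p \leq X} 1/p = \log\log X + O(1)$ this gives $\D(\lambda, \chi(n) n^{it}; X)^2 \gg \log\log X$ uniformly in $|t| \leq X$, hence tending to infinity. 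The case $g_1 = \mu$ is identical since $\mu$ and $\lambda$ agree on the primes.

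Each application of Corollary \ref{elliott-1}(ii) or Corollary \ref{elliott-2}(ii) yields, for a fixed tuple $(k, g_1, \ldots, g_k)$, an exceptional set $\mathcal{X}_0^{(i)}$ of logarithmic density zero valid uniformly in all shifts. The collection of such tuples (with $k$ odd or $k=2$ and each $g_i \in \{\lambda, \mu\}$) is countable, so I would combine the $\mathcal{X}_0^{(i)}$ into a single set $\mathcal{X}_0$ of logarithmic density zero by a standard diagonal construction, e.g.\ $\mathcal{X}_0 = \bigcup_i (\mathcal{X}_0^{(i)} \cap [N_i, N_{i+1}))$ for sufficiently fast-growing $N_i$. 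The only genuinely analytic step in the whole deduction is the uniform-in-$t$ non-pretentiousness of $\lambda$ used in the $k=2$ case; the rest is formal once the main corollaries are in hand.
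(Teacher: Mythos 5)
Your approach is the same as the paper's: reduce to Corollary \ref{elliott-1} for odd $k$ and Corollary \ref{elliott-2}(ii) for $k=2$, then diagonalise. Using \ref{elliott-1}(ii) rather than \ref{elliott-1}(i) (as the paper does) is a mild simplification since the shift- and $\eps$-diagonalisation is already built in, and the explicit verification of the non-pretentiousness hypotheses that you supply is a reasonable thing to spell out even though the paper takes it as known.

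Two slips, both fixable. First, the uniform claim ``$\sum_{p \leq X}\chi(p)p^{it}/p = o(\log\log X)$ uniformly for $|t| \leq X$'' is false when $\chi$ is the principal character and $t$ is near $0$: there the sum is $\log\log X + O(1)$. What the zero-free regions actually give is the one-sided estimate, namely that the sum cannot be close to $-\log\log X$; equivalently, $\lambda$ does not pretend to be $\chi(n)n^{it}$, so $\D(\lambda,\chi(n)n^{it};X)^2 \gg \log\log X$ uniformly. Your final conclusion is right, but because in the offending case the real part is close to $+1$ rather than $-1$, so $\D^2$ is in fact $\sim 2\log\log X$; the intermediate $o(\log\log X)$ claim should be replaced by the correct lower bound. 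Second, the diagonal construction $\mathcal{X}_0 = \bigcup_i \bigl(\mathcal{X}_0^{(i)} \cap [N_i, N_{i+1})\bigr)$ does not achieve what you need: to conclude $\lim_{X\to\infty;X\notin\mathcal{X}_0}\E_{n\leq X}\cdots = 0$ for a fixed tuple indexed by $i$, you need $\mathcal{X}_0$ to contain a cofinite subset of $\mathcal{X}_0^{(i)}$, but your $\mathcal{X}_0$ only contains the slice of $\mathcal{X}_0^{(i)}$ in a single bounded interval $[N_i,N_{i+1})$. The standard fix (and the one used in the proof of Corollary \ref{elliott-1}(ii)) is to take cofinite tails, $\mathcal{X}_0 = \bigcup_i \bigl(\mathcal{X}_0^{(i)} \cap [N_i,\infty)\bigr)$, with $N_i$ chosen so that each tail has logarithmic density at most $2^{-i}$ uniformly in the cutoff.
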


We establish these results in Section \ref{corollaries-sec}.
One can use these corollaries to extend some previous results involving the logarithmic density of sign patterns to now cover unweighted densities of sign patterns at almost all scales.  For instance, by inserting Corollary \ref{chow} into the proof of \cite[Corollary 1.10(i)]{tt}, one obtains the following.

\begin{corollary}[Liouville sign patterns of length three]  There is an exceptional set ${\mathcal X}_0$ of logarithmic density zero, such that
$$ \lim_{X \to \infty; X \not \in {\mathcal X}_0} \E_{n \leq X} 1_{(\lambda(n),\lambda(n+1),\lambda(n+2)) = (\epsilon_0, \epsilon_1, \epsilon_2)} = \frac{1}{8}$$
for all sign patterns $(\epsilon_0, \epsilon_1, \epsilon_2) \in \{-1,1\}^3$.
\end{corollary}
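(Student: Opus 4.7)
The plan is to reduce the statement to Corollary \ref{chow} via a Fourier-type expansion of the sign-pattern indicator. Since $\lambda(n) \in \{-1,+1\}$ for all $n \in \N$, the identity $1_{\lambda(n)=\epsilon} = (1 + \epsilon \lambda(n))/2$ holds for any $\epsilon \in \{-1,+1\}$. Multiplying the three such identities for $i = 0, 1, 2$ and expanding gives
$$ 1_{(\lambda(n),\lambda(n+1),\lambda(n+2)) = (\epsilon_0, \epsilon_1, \epsilon_2)} = \frac{1}{8} \sum_{S \subseteq \{0,1,2\}} \biggl(\prod_{i \in S} \epsilon_i\biggr) \prod_{i \in S} \lambda(n+i). $$

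First I would average both sides over $n \leq X$. The empty subset contributes exactly $1/8$, which is the desired main term. For each non-empty $S \subseteq \{0,1,2\}$ we obtain a correlation $\E_{n \leq X} \prod_{i \in S} \lambda(n+i)$ of order $k = |S| \in \{1,2,3\}$ with distinct shifts drawn from $\{0,1,2\}$. These are precisely the orders handled by Corollary \ref{chow}: $k=1$ is the prime number theorem, $k=2$ is its two-point case, and $k=3$ is the simplest odd case. Thus for each such $S$ there is an exceptional set $\mathcal{X}_S$ of logarithmic density zero off of which the corresponding correlation tends to zero as $X \to \infty$.

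I would then define $\mathcal{X}_0 \coloneqq \bigcup_{\emptyset \neq S \subseteq \{0,1,2\}} \mathcal{X}_S$, which is a finite union of sets of logarithmic density zero and hence itself of logarithmic density zero, by subadditivity of the logarithmic counting functional. Restricting to $X \notin \mathcal{X}_0$, all seven non-trivial correlation terms vanish in the limit, so the average of the indicator converges to $1/8$, as claimed.

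There is essentially no substantive obstacle here: all the analytic difficulty has already been absorbed into Corollary \ref{chow}, and the remaining argument is a standard expansion-and-union bookkeeping step. The only mild point to verify is that one may combine finitely many exceptional sets of logarithmic density zero and still land in that class, which is automatic.
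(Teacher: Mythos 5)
Your proof is correct and takes essentially the same approach as the paper intends: the paper defers to the proof of \cite[Corollary 1.10(i)]{tt}, which uses exactly this Fourier-type expansion of the sign-pattern indicator, now fed by Corollary \ref{chow} in place of the logarithmically averaged Chowla results. One small simplification you missed: Corollary \ref{chow} already furnishes a \emph{single} exceptional set $\mathcal{X}_0$ of logarithmic density zero valid simultaneously for all admissible $k$ and all tuples of distinct shifts, so there is no need to take the union of seven separate sets $\mathcal{X}_S$ (though doing so is harmless, since a finite union of logarithmic-density-zero sets still has logarithmic density zero).
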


Similarly several other results in \cite{tt} and in \cite{tera-binary} can be generalised. For example, the result \cite[Theorem 1.16]{tera-binary} on the largest prime factors of consecutive integers can be upgraded to the following form.

\begin{corollary}[The largest prime factors of consecutive integers at almost all scales] \label{cor1} Let $P^{+}(n)$ be the largest prime factor of $n$ with $P^{+}(1):=1$. Then there is an exceptional set  $\mathcal{X}_0$ of logarithmic density $0$, such that
\begin{align}\label{eq10}
\lim_{X\to \infty; X\not \in \mathcal{X}_0} \mathbb{E}_{n\leq X}1_{P^{+}(n)<P^{+}(n+1)}=\frac{1}{2}.  \end{align}

\end{corollary}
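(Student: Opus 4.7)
The plan is to adapt the argument for \cite[Theorem 1.16]{tera-binary} by replacing its invocation of the logarithmic binary Elliott result \cite[Corollary 1.5]{tao} with our Corollary \ref{elliott-2}. The hypothesis and conclusion of Corollary \ref{elliott-2} differ from those of \cite[Corollary 1.5]{tao} only in that logarithmic averages are replaced by ordinary averages taken outside an exceptional set of scales of logarithmic density zero, so the resulting argument converts the logarithmic density conclusion of \cite[Theorem 1.16]{tera-binary} into the almost-all-scales conclusion \eqref{eq10}.

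To sketch the structure: since $n$ and $n+1$ are coprime for $n\geq 1$, we have $P^{+}(n)\neq P^{+}(n+1)$, whence
\begin{align*}
1_{P^{+}(n)<P^{+}(n+1)}=\tfrac12+\tfrac12\,\mathrm{sgn}\bigl(P^{+}(n+1)-P^{+}(n)\bigr),
\end{align*}
and it suffices to show that $\E_{n\leq X}\mathrm{sgn}(P^{+}(n+1)-P^{+}(n))=o(1)$ for $X$ outside a set $\mathcal{X}_0$ of logarithmic density zero. Following \cite{tera-binary}, for each $\eta>0$ one partitions the possible values of $\log P^{+}(m)/\log m$ into $O(1/\eta)$ intervals and expresses $\mathrm{sgn}(P^{+}(n+1)-P^{+}(n))$, up to an $L^1$ error of $O(\eta)$ (controlled by the joint Erd\H os--Pomerance distribution of $(P^{+}(n),P^{+}(n+1))$), as a finite signed sum of two-point correlations $\E_{n\leq X} g(n)\,h(n+1)$ in which $g,h$ are $1$-bounded multiplicative functions assembled out of the completely multiplicative smooth-number indicators $\alpha_y(n)=1_{P^{+}(n)\leq y}$ (possibly multiplied by suitable Dirichlet characters or Liouville factors to break pretentiousness). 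The non-pretentiousness hypothesis \eqref{suptx} required by Corollary \ref{elliott-2} is verified for these auxiliary functions in \cite{tera-binary}.

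Applying Corollary \ref{elliott-2}(ii) to each of the finitely many resulting correlations produces a common exceptional set of logarithmic density zero outside of which every correlation is $o(1)$; this finite union of exceptional sets still has logarithmic density zero, and outside it the full signed sum is $O(\eta)$. Diagonalising over $\eta=1/m\to 0$ and taking a countable union of exceptional sets (still of logarithmic density zero) produces the required $\mathcal{X}_0$ and establishes \eqref{eq10}. The main obstacle, already handled in \cite{tera-binary}, is this non-pretentiousness verification; one has to rule out resonance with every twisted Dirichlet character $\chi(n)n^{it}$ uniformly in $|t|\leq X$, which in particular shows that the counterexample phenomenon of \cite{mrt} is harmless here because the support of the auxiliary functions on primes is wide enough. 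Once this input is cited, the transfer from logarithmic to almost-all-scales averaging is purely formal, driven entirely by Corollary \ref{elliott-2}.
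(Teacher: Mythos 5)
Your plan has a genuine gap in its central step. You claim to reduce the problem to a finite signed combination of two-point correlations $\E_{n\leq X}\,g(n)h(n+1)$ of $1$-bounded multiplicative functions to which Corollary \ref{elliott-2} applies, but the relevant functions here are (up to approximation) the smooth-number indicators $g_\alpha(n)=1_{P^+(n)<X^\alpha}$, and these are \emph{pretentious}: $\D(g_\alpha,1;X)^2=\sum_{X^\alpha<p\leq X}1/p=\log(1/\alpha)+o(1)$ is \emph{bounded}, so $g_\alpha$ weakly pretends to be the trivial character, and the hypothesis \eqref{suptx} of Corollary \ref{elliott-2} simply fails. Multiplying by a Dirichlet character only changes which character one pretends to be, and inserting a Liouville factor does not help because it has to be cancelled somewhere, so the claim that ``the non-pretentiousness hypothesis is verified in \cite{tera-binary}'' is not correct for these functions. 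Even setting this aside, the conclusion you need is that $\E_{n\leq X}1_{P^+(n)<n^\alpha}1_{P^+(n+1)<n^\beta}$ tends to the \emph{non-zero} value $\rho(1/\alpha)\rho(1/\beta)$, whereas Corollary \ref{elliott-2} can only assert that a correlation tends to zero; it has no mechanism for producing a Dickman main term. Relatedly, the reduction in \cite{tera-binary} does not proceed by a black-box invocation of \cite[Corollary 1.5]{tao}; it develops a separate framework for pretentious binary correlations, so ``swap Tao's corollary for Corollary \ref{elliott-2}'' is not a well-defined replacement.

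The paper's actual route is to run the full structural Theorem \ref{main} (not just its corollaries) for the approximately multiplicative functions $g_1(n)=1_{P^+(n)<n^\alpha}$, $g_2(n)=1_{P^+(n)<n^\beta}$. Since $G\equiv 1$, one is in the pretentious case $(\chi,t)=(1,0)$ of Theorem \ref{main}(ii), which yields $\E^{\log\log}_{d\in\N}|f_d(1)-c^*|=0$, i.e.\ $f_d(1)$ is essentially constant in $d$ for every generalised limit. One then identifies $c^*=\rho(1/\alpha)\rho(1/\beta)$ using the logarithmic binary Erd\H{o}s--Pomerance result of \cite{tera-binary}, and finally passes to almost-all scales via the maximal-inequality mechanism of Corollary \ref{elliott-1}(i) (cf.\ Remark \ref{18-rem}). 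Your $L^1$ discretisation via the marginal Dickman distribution and the closing diagonalisation over $\eta$ are fine and match the paper in spirit, but the key middle step must go through a pretentious version of the structural theorem (to show $f_d(1)$ does not depend on $d$), not through a non-pretentious Elliott-type vanishing statement.
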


The same equality with ordinary limit in place of the almost-all limit is an old conjecture formulated in the correspondence of Erd\H{o}s and Turán \cite[pp. 100--101]{sos}, \cite{erdos}. We remark on the proof of Corollary \ref{cor1} in Remark \ref{rem1}. In \cite[Theorem 1.6]{tera-binary} it was proved that  \eqref{eq10} holds for the logarithmic average $\mathbb{E}_{n\leq X}^{\log}$ (without any exceptional scales).

It would of course be desirable if we could upgrade ``almost all scales'' to ``all scales'' in the above results.  We do not know how to do so in general, however there is one exceptional (though conjecturally non-existent) case in which this is possible, namely if there are unusually few sign patterns in the multiplicative functions of interest.    We illustrate this principle with the following example.

\begin{theorem}[Few sign patterns implies binary Chowla conjecture]\label{sign-thm}  Suppose that for every $\eps>0$, there exist arbitrarily large natural numbers $K$ such that the set $\{ (\lambda(n+1), \dots, \lambda(n+K)): n \in \N \} \subset \{-1,+1\}^K$ of sign patterns of length $K$ has cardinality less than $\exp( \eps \frac{K}{\log K} )$.  Then, for any natural number $h$, one has
\begin{align*}
\lim_{X\to \infty}\E_{n \leq X} \lambda(n) \lambda(n+h) = 0.
\end{align*}
\end{theorem}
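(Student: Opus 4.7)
The plan is to argue by contradiction, using Corollary \ref{elliott-2} as the base case that already supplies the conclusion along almost all scales, and then invoking the sign-pattern hypothesis to eliminate the remaining exceptional scales. The Liouville function $\lambda$ is strongly non-pretentious to every twisted Dirichlet character $n \mapsto \chi(n) n^{it}$ (since $\lambda(p) = -1$ at every prime, so \eqref{suptx} holds uniformly in $|t| \leq X$), so Corollary \ref{elliott-2} applied with $g_1 = g_2 = \lambda$ yields a set $\mathcal{X}_0$ of logarithmic density zero with $\E_{n \leq X} \lambda(n) \lambda(n+h) \to 0$ as $X \to \infty$ outside $\mathcal{X}_0$. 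If the conclusion fails, then there is $\delta > 0$ and a subsequence $X_j \to \infty$, forced to lie in $\mathcal{X}_0$, with $|\E_{n \leq X_j} \lambda(n) \lambda(n+h)| \geq 3\delta$.

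The mechanism for removing these exceptional scales is the approximate dilation invariance coming from the entropy decrement argument, formalised by Proposition \ref{dollop} and \eqref{nxp}: for most primes $p$ in a dyadic range $[P, 2P]$ and most scales $X$,
$$ \E_{n \leq X} \lambda(n) \lambda(n+h) \;\approx\; \E_{n \leq X/p} \lambda(n) \lambda(n+ph), $$
and the exceptional set of scales is controlled, up to constants, by the Shannon entropy of the distribution of length-$K$ blocks of $\lambda$, which is at most $\log |P_K|$. The hypothesis supplies, for every $\eps > 0$, arbitrarily large $K$ with $\log |P_K| < \eps K / \log K$, so this entropy contribution can be made as small as we wish. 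For a suitable choice of such $K$ (and the associated dyadic range of primes), the exceptional set is forced to be thin enough that $X_j$ lies outside it, and the dilation identity holds with error $< \delta$ at $X = X_j$ for \emph{every} prime $p \in [P, 2P]$.

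The final step applies Corollary \ref{elliott-2} a second time to the shifted correlations $\E_{n \leq Y} \lambda(n) \lambda(n+ph)$ (still non-pretentious), each producing its own exceptional set $\mathcal{X}_0^{(ph)}$ of logarithmic density zero. A pigeonhole argument over the $\gg P/\log P$ primes $p \in [P, 2P]$, using the logarithmic Banach density version of part (i) of Corollary \ref{elliott-2} to quantify how sparsely $X_j/p$ can hit each $\mathcal{X}_0^{(ph)}$, produces a prime $p$ for which $|\E_{n \leq X_j/p} \lambda(n) \lambda(n+ph)| < \delta$. Combined with the dilation identity this yields $|\E_{n \leq X_j} \lambda(n) \lambda(n+h)| < 2\delta$, contradicting the assumption. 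The principal obstacle is the quantitative calibration: the bound $\eps K / \log K$ on $\log |P_K|$ must be delicately balanced against the choice of the dyadic range $P$ and the structure of the entropy decrement in Proposition \ref{dollop}, so that all primes $p \in [P, 2P]$ simultaneously give valid dilation identities at the specific scale $X = X_j$, rather than merely at most scales as the unconditional argument provides.
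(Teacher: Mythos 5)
Your proposal diverges from the paper's proof and has two genuine gaps.

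First, the improved entropy decrement argument (under the few-sign-patterns hypothesis) gives
$$ \sup_{d>0}\, \E_{2^m \leq p < 2^{m+1}} |f_{dp}(a) - f_d(ap)| \leq \eps $$
for arbitrarily large $m$, where the $f_d$ are the \emph{generalised-limit} quantities \eqref{fda-def-again}, and the estimate controls only the \emph{average} over $p$ of the discrepancy. It does not say that for the specific scale $X_j$ the dilation identity $\E_{n\le X_j}\lambda(n)\lambda(n+h)\approx\E_{n\le X_j/p}\lambda(n)\lambda(n+ph)$ holds with small error for \emph{every} prime $p\in[P,2P]$, nor even for most $p$ at that one scale. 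Your claim ``the dilation identity holds with error $<\delta$ at $X=X_j$ for every prime $p\in[P,2P]$'' is therefore not supplied by the entropy decrement, and I do not see how to extract it. The paper avoids this issue by never working at a specific scale: one simply picks the generalised limit $\lim^*$ to track the subsequence $X_j$ so that $|f_1(1)|\gg\delta$, applies the improved isotopy to deduce $f_1(1)=\E_{2^m\le p<2^{m+1}}f_{2^m}(p)+O(\eps)$, and then contradicts this directly.

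Second, and more seriously, your final pigeonhole over primes using Corollary \ref{elliott-2} cannot close the argument. The exceptional set $\mathcal{X}_\eps^{(ph)}$ of logarithmic Banach density zero for the shift $ph$ depends on $ph$, and the rate in \eqref{eq6} (i.e.\ the value of $\omega$ past which the density is below any threshold) is not uniform as $p$ ranges over $[P,2P]$. Nothing in Corollary \ref{elliott-2} prevents all of the $\sim P/\log P$ points $X_j/p$ from lying in their respective (different!) exceptional sets, so the pigeonhole can yield nothing. What is actually needed to bound $\E_{P\le p<2P}\E_{n\le X/P}\lambda(n)\lambda(n+ph)$ is a bilinear estimate in the variables $n$ and $p$ that holds at \emph{every} sufficiently large scale $X$, not at almost all scales. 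The paper obtains this from the circle-method argument of \cite{tao} (Lemmas 3.6, 3.7 and eq.\ (2.9) there), which rests on the Matom\"aki--Radziwi\l{}\l{} theorem. An ``almost all scales'' statement like Corollary \ref{elliott-2} is too weak for this final step; you would need the underlying unconditional bilinear estimate that \cite{tao} provides.
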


\begin{remark}
The best known lower bounds for the number $s(K)$ of sign patterns of length $K$ for the Liouville function are very far from $\exp(\varepsilon \frac{K}{\log K})$. It was shown by Matom\"aki, Radziwi\l{}\l{} and Tao \cite{mrt-sign} that $s(K)\geq K+5$, and Frantzikinakis and Host showed in \cite{fh-sarnak} that $s(K)/K\to \infty$ as $K\to \infty$, but the rate of growth is inexplicit in that result. This was very recently improved to $s(K)\gg K^2$ by McNamara \cite{mcnamara}. If one assumes the Chowla conjecture (in either the unweighted or logarithmically averaged forms), it is not difficult to conclude that in fact $s(K) =2^K$ for all $K$.
\end{remark}

We prove this result in Section \ref{sign-sec}.  Roughly speaking, the reason for this improvement is that the entropy decrement argument that is crucially used in the previous arguments becomes significantly stronger under the hypothesis of few sign patterns.  A similar result holds for the odd order cases of the Chowla conjecture if one assumes the sign pattern control for \emph{all} large $K$ (rather than for a sequence of arbitrarily large $K$) by adapting the arguments in  \cite{tt-odd}, but we do not do so here.  It is also possible to strengthen this theorem in a number of further ways (for instance, restricting attention to sign patterns that occur with positive upper density, or to extend to other $1$-bounded multiplicative functions than the Liouville function), but we again do not do so so here. 

One should view Theorem \ref{sign-thm} as stating that if there is ''too much structure'' in the Liouville sequence (in the sense that it has a small number of sign patterns), then the binary Chowla conjecture holds. This is somewhat reminiscent of various statements in analytic number theory that rely on the assumption of a Siegel zero; for example, Heath-Brown \cite{hb} proved that if there are Siegel zeros, then the twin prime conjecture (which is connected to the two-point Chowla conjecture) holds. Nevertheless, the proof of Theorem \ref{sign-thm} does not resemble that in \cite{hb}.

\subsection{Isotopy formulae}

The conclusion of Theorem \ref{main}(ii) asserts, roughly speaking, that $f_d(a)$ ``behaves like'' a multiple of $\chi(a) d^{-it}$ in a certain asymptotic sense.  The following corollary of that theorem makes this intuition a bit more precise. 

\begin{theorem}[Isotopy formulae] \label{isotopy}
Let $k\geq 1$, let $h_1,\ldots, h_k$ be integers and $g_1,\ldots, g_k:\mathbb{N}\to \mathbb{D}$ be $1$-bounded multiplicative functions. Suppose that the product $g_1\cdots g_k$ weakly pretends to be a twisted Dirichlet character $n\mapsto \chi(n)n^{it}$. 

\begin{itemize}
\item[(i)] (Archimedean isotopy) There exists an exceptional set $\mathcal{X}_0$ of logarithmic density zero, such that 
\begin{align*}
\lim_{X\to \infty; X\not \in \mathcal{X}_0}\left(\mathbb{E}_{n\leq X}g_1(n+h_1)\cdots g_k(n+h_k)-q^{it}\mathbb{E}_{n\leq X/q}g_1(n+h_1)\cdots g_k(n+h_k)\right)=0    
\end{align*}
for all rational numbers $q>0$.

\item[(ii)] (Non-archimedean isotopy) There exists an exceptional set $\mathcal{X}_0$ of logarithmic density zero, such that 
\begin{align*}
\lim_{X\to \infty; X\not \in \mathcal{X}_0}\left(\mathbb{E}_{n\leq X}g_1(n-ah_1)\cdots g_k(n-ah_k)-\chi(-1)\mathbb{E}_{n\leq X}g_1(n+ah_1)\cdots g_k(n+ah_k)\right)=0    
\end{align*}
for all integers $a$.
\end{itemize}
\end{theorem}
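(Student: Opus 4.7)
The plan is to deduce both parts from Theorem~\ref{main}(ii) by identifying the difference of the two correlations as $\lim^{*}_{X\to\infty} G(X/d)$ for a suitable bounded sequence $G$, showing via \eqref{hd} that this vanishes in $\log\log$-average over $d$, and then converting this to an almost-all-scales statement by the Hardy--Littlewood-type maximal argument that drives the proof of Corollary~\ref{elliott-1}.

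For part (i), I would fix a positive rational $q$ and set
\[
G_q(X) := \E_{n\leq X} g_1(n+h_1)\cdots g_k(n+h_k) - q^{it}\,\E_{n\leq X/q} g_1(n+h_1)\cdots g_k(n+h_k).
\]
With $a=1$ in \eqref{fda-def}, every generalised limit functional satisfies $\lim^{*}_{X\to\infty} G_q(X/d) = f_d(1) - q^{it} f_{qd}(1)$. Applying \eqref{hd} once at $d$ and once at $qd$ and using the identity $q^{it}(qd)^{-it}=d^{-it}$ to cancel the two copies of the approximant $f(1)d^{-it}$, I obtain
\[
\lim_{Y\to\infty}\E_{d\leq Y}^{\log\log}\,|f_d(1) - q^{it} f_{qd}(1)| = 0.
\]
When $q\in\N$, the second application is immediate since $qd\in\N$ and the $\log\log$-average over $qd$ is a sub-average of the full $\log\log$-average over $\N$; the general rational case $q=p/r$ follows from the two integer cases $q=p,r$ via the algebraic identity
\[
A(X) - q^{it}A(X/q) = \bigl(A(X)-p^{it}A(X/p)\bigr) - (p/r)^{it}\bigl(A(Xr/p)-r^{it}A(X/p)\bigr),
\]
where $A(Y) := \E_{n\leq Y} g_1(n+h_1)\cdots g_k(n+h_k)$.

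For part (ii), I would fix an integer $a$ and set
\[
H_a(X) := \E_{n\leq X} g_1(n-ah_1)\cdots g_k(n-ah_k) - \chi(-1)\,\E_{n\leq X} g_1(n+ah_1)\cdots g_k(n+ah_k),
\]
so that $\lim^{*}_{X\to\infty} H_a(X/d) = f_d(-a) - \chi(-1) f_d(a)$. The key algebraic observation is that the two candidate leading terms coincide: the $\chi$-isotypic periodic approximants $F_i$ to $f$ satisfy $F_i(-a) = F_i(a)\chi(-1)$ because $-1$ is coprime to every positive integer, and the uniform limit inherits this identity as $f(-a) = \chi(-1) f(a)$. Applying \eqref{hd} separately at $a$ and $-a$ and invoking the triangle inequality thus gives
\[
\lim_{Y\to\infty}\E_{d\leq Y}^{\log\log}\,|f_d(-a) - \chi(-1) f_d(a)| = 0.
\]

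The final step in each part is to pass from the $\log\log$-averaged bound above (valid for every generalised limit functional) to the assertion that $G_q(X)$, respectively $H_a(X)$, tends to $0$ as $X\to\infty$ outside a set $\mathcal X_{q,\eps}$, resp.\ $\mathcal X_{a,\eps}$, of logarithmic Banach density zero for every $\eps>0$. This is precisely the Hardy--Littlewood maximal argument that will be used to deduce Corollary~\ref{elliott-1}(i) from Theorem~\ref{main}(i), applied now to the bounded difference sequences $G_q,H_a$ in place of the correlation sequence; a standard diagonalisation over a null sequence $\eps_i\to 0$ and the countable collection of positive rationals $q$ (resp.\ integers $a$) then yields a single exceptional set $\mathcal X_0$ of logarithmic density zero valid uniformly in the parameter, exactly as in the passage from Corollary~\ref{elliott-1}(i) to (ii). The main hurdle I anticipate is precisely verifying this maximal-inequality step for the difference sequences $G_q,H_a$ rather than a single correlation; this should be routine since they are finite linear combinations of dilated correlations, but careful scale-bookkeeping is needed to ensure a single exceptional set of logarithmic density (rather than merely Banach density) zero, uniform in the parameter.
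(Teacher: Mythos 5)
Your proposal is correct and matches the paper's approach in all essentials: the paper first isolates exactly your $\log\log$-averaged identities as Lemma~\ref{le1} (deducing them from \eqref{hd} by the same sub-averaging over multiples of $q$, the same $q^{it}(qd)^{-it}=d^{-it}$ cancellation, and the same $f(-a)=\chi(-1)f(a)$ observation via the $\chi$-isotypic approximants), and then invokes the Hardy--Littlewood maximal argument of Corollary~\ref{elliott-1}(i) together with a diagonalisation, exactly as you describe. The only cosmetic difference is in reducing rational $q$ to integer $q$: the paper rescales $X\mapsto bX$ and takes $\mathcal{X}_{0,q}=(1/b)\mathcal{X}_{0,a}\cup(1/b)\mathcal{X}_{0,b}$, whereas you use an equivalent algebraic telescoping identity; both are routine.
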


\begin{remark}
This generalises \cite[Theorem 1.2(iii)]{tt}, which implies $f(-a)=\chi(-1)f(a)$ with $f(a)$ a generalised limit of a logarithmic correlation defined in \eqref{eq18} (indeed, Theorem \ref{isotopy}(ii) implies by partial summation that $f(-a)=\chi(-1)f(a)$ in the notation of \eqref{eq18}). In \cite{tt}, we only considered logarithmically averaged correlations, and for such averages Theorem \ref{isotopy}(i) does not make sense, as logarithmic averages are automatically slowly varying. However, for unweighted averages Theorem \ref{isotopy}(i) gives nontrivial information about the behaviour of the correlation at nearby scales. 
\end{remark}

We give the proof of Theorem \ref{isotopy} in Section \ref{isotopy-sec}. We show in that section that, perhaps surprisingly, the non-archimedean isotopy formula (Theorem \ref{isotopy}(ii)) allows us to evaluate the correlations of some multiplicative functions whose product \emph{does pretend} to be a Dirichlet character. Among other things, we use the isotopy formula to prove a version of the even order logarithmic Chowla conjectures where we twist one of the copies of the Liouville function by a carefully chosen Dirichlet character and the shifts of $\lambda$ are consecutive.

\begin{corollary}[Even order correlations of a twisted Liouville function] \label{even-chowla} Let $k\geq 4$ be an even integer, and let $\chi$ be an odd Dirichlet character of period $k-1$ (there are $\frac{\varphi(k-1)}{2}$ such characters). Then there exists an exceptional set $\mathcal{X}_0$ of logarithmic density $0$, such that
\begin{align}\label{eq1}
\lim_{X\to \infty; X\not \in \mathcal{X}_0}\mathbb{E}_{n\leq X}\chi(n)\lambda(n)\lambda(n+a)\cdots \lambda(n+(k-1)a)=0   
\end{align}
for all integers $a$.
\end{corollary}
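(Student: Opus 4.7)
The plan is to combine the non-archimedean isotopy formula of Theorem~\ref{isotopy}(ii) with a symmetry argument coming from the periodicity of $\chi$. First I would check that the hypothesis of Theorem~\ref{isotopy} is in fact met with the exponent $t=0$: take $g_1 := \chi\lambda$ and $g_2=\cdots=g_k:=\lambda$, with shifts $h_j := j-1$. Since $k$ is even and $\lambda$ is completely multiplicative with values in $\{\pm 1\}$, we have $\lambda^k \equiv 1$, and hence
$$g_1 g_2 \cdots g_k = \chi \lambda^k = \chi,$$
which is itself a Dirichlet character. Thus the product trivially weakly pretends to be $n\mapsto \chi(n)n^{it}$ with $t=0$.

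Next, write
$$C(a;X) := \mathbb{E}_{n\leq X} \chi(n)\lambda(n)\lambda(n+a)\cdots\lambda(n+(k-1)a).$$
Applying Theorem~\ref{isotopy}(ii) to the above data yields an exceptional set $\mathcal{X}_0$ of logarithmic density zero such that, for every integer $a$,
$$\lim_{X\to\infty;\, X\notin \mathcal{X}_0} \bigl( C(-a;X) - \chi(-1)\,C(a;X)\bigr) = 0.$$
Because $\chi$ is odd, $\chi(-1)=-1$, so the relation becomes $C(-a;X)+C(a;X)\to 0$ along $X\notin\mathcal{X}_0$.

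The key remaining step is to show that, \emph{unconditionally} (i.e.\ for all $X\to\infty$, no exceptional scales),
$$C(-a;X) - C(a;X) = o(1).$$
To see this I would make the change of variables $n = m+(k-1)a$ in $C(-a;X)$: the product $\lambda(n)\lambda(n-a)\cdots\lambda(n-(k-1)a)$ becomes the reverse ordering $\lambda(m+(k-1)a)\cdots\lambda(m+a)\lambda(m)$, which is the same product of factors as in $C(a;X)$. The crucial input is that $(k-1)a$ is a multiple of the period $k-1$ of $\chi$, so $\chi(m+(k-1)a)=\chi(m)$. Up to boundary terms of size $O(|a|/X)$, this gives $C(-a;X)=C(a;X)+o(1)$.

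Combining the two displayed statements yields $2\,C(a;X)\to 0$ along $X\notin\mathcal{X}_0$, which is \eqref{eq1}. The only (very mild) subtlety is ensuring the exceptional set $\mathcal{X}_0$ can be chosen uniformly in $a\in\Z$; Theorem~\ref{isotopy}(ii) already delivers this, so there is no real obstacle. The main conceptual point, and the one step I expect to require the most care in writing, is the verification that the periodicity of $\chi$ (precisely at period $k-1$) interlocks with the length $k$ of the correlation to produce the change-of-variable symmetry $C(-a;X)=C(a;X)+o(1)$—this is exactly the reason the result is stated for characters of period $k-1$ rather than some other modulus.
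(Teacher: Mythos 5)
Your proof is correct and uses essentially the same mechanism as the paper. The paper first proves a general statement (Theorem \ref{even}) valid for reflection-symmetric tuples $(g_1,\dots,g_k)$ and progression-like shift tuples with a $\chi$-weight satisfying $\chi(n+h_1+h_k)=\chi(n)$, and then specialises to $g_i=\lambda$, $h_j=(j-1)a$; but the ingredients are identical to yours: Theorem \ref{isotopy}(ii) applied after absorbing $\chi$ into one of the factors so that the product weakly pretends to be an odd character, followed by the translation-by-$(k-1)a$ change of variables exploiting the periodicity of $\chi$ and the reflection symmetry of the Liouville factors. Your observation that the parameter $a$ in Theorem \ref{isotopy}(ii) supplies the needed uniformity of $\mathcal{X}_0$ (so long as you set $h_j=j-1$ and let $a$ be the scaling parameter, as you did) is exactly the point the paper uses as well.
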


By partial summation, we see from \eqref{eq1} that we have the logarithmic correlation result 
\begin{align*}
\lim_{X \to \infty}\mathbb{E}_{n \leq X}^{\log} \chi(n)\lambda(n)\lambda(n+1)\cdots \lambda(n+k-1)=0,        
\end{align*}
which is already new. We stated Corollary \ref{even-chowla} only for even $k$, but of course the result also holds for odd $k$ by Corollary \ref{elliott-1}.

The assumption that $\chi$ is an odd character is crucial above, as will be seen in Section \ref{isotopy-sec}; the isotopy formulae are not able to say anything about the untwisted even order correlations of the Liouville function. 

We likewise show in Section \ref{isotopy-sec} that the archimedean isotopy formula (Theorem \ref{isotopy}(i)) gives a rather satisfactory description of the \emph{limit points} of the correlations
\begin{align}\label{eq19}
\mathbb{E}_{n\leq X}g_1(n+h_1)\cdots g_k(n+h_k),
\end{align}
where the product $g_1\cdots g_k$ weakly pretends to be a twisted Dirichlet character $n\mapsto \chi(n)n^{it}$ with $t\neq 0$. Indeed, our Theorem \ref{limits} shows that once one continuously excludes the scales at which the correlation \eqref{eq19} is close to zero, the argument of the quantity in \eqref{eq19} is in a sense uniformly distributed on the unit circle. This uniform distribution is indeed expected when $g_j$ are pretentious; for example, one has $\mathbb{E}_{n\leq X}n^{it}=\frac{X^{it}}{1+it}+o(1)$, which uniformly distributes on the circle of radius $1/|1+it|$ with respect to logarithmic density.

\subsection{Proof ideas}

We now briefly describe (in informal terms) the proof strategy for Theorem \ref{main}, which follows the ideas in \cite{tt}, but now contains some ``Archimedean'' arguments (relating to the Archimedean characters $n \mapsto n^{it}$) in addition to the ``non-Archimedean'' arguments in \cite{tt} (that related to the Dirichlet characters $n \mapsto \chi(n)$).  The new features compared to \cite{tt} include extensive use of the fact that the correlations $f_d(a)$ are ``slowly varying'' in terms of $d$ (this is made precise in formula \eqref{fd1d2}), and the use of this to derive ``approximate quasimorphism properties'' for certain quantities related to these correlations (these are detailed below). We then prove that the approximate quasimorphisms are very close to  actual quasimorphisms (which in our case are Dirichlet characters or Archimedean characters), which eventually leads to the desired conclusions.

As already noted, one key ingredient is (a rigorous form of) the approximate identity \eqref{nxp} that arises from the entropy decrement argument.  In terms of the correlation functions $f_d(a)$, this identity takes the (heuristic) form
$$ f_{dp}(a) G(p) \approx f_{d}(ap)$$
for any integers $a,d$ and ``most'' $p$, where $G \coloneqq g_1 \cdots g_k$; see Proposition \ref{dollop} for a precise statement.  Compared to \cite{tt}, the main new difficulty is the dependence of $f_d$ on the $d$ parameter. 

Assuming for simplicity that $G$ has modulus $1$ (which is the most difficult case), we thus have
$$ f_{dp}(a) \approx f_{d}(ap) \overline{G(p)}$$
for any integers $a,d$ and ``most'' $p$.  Iterating this leads to
\begin{equation}\label{p1p2}
 f_{p_1 p_2}(a) \approx f_1(ap_1p_2) \overline{G(p_1)} \overline{G(p_2)}
\end{equation}
for ``most'' primes $p_1,p_2$ (more precisely, the difference between the two sides of the equation is $o(1)$ when suitably averaged over  $p_1,p_2$; see Corollary \ref{mango}).   On the other hand, results from ergodic theory (such as \cite{leibman}, \cite{le})  give control on the function $f_1(a)$, describing it (up to negligible errors) as a nilsequence, which can then be decomposed further into a periodic piece $f_{1,0}$ and an ``irrational'' component.  The irrational component was already shown in \cite{tt} to give a negligible contribution to the equation \eqref{p1p2} after performing some averaging in $p_1,p_2$, thanks to certain bilinear estimates for nilsequences.  As such, one can effectively replace $f_1$ here by the periodic component $f_{1,0}$ (see \eqref{2m1-0} for a precise statement).

We thus reach the relation
\begin{align*}
 f_{p_1 p_2}(a) \approx f_{1,0}(ap_1p_2) \overline{G(p_1)} \overline{G(p_2)}
 \end{align*}
 for ``most'' $p_1,p_2$. Let $q$ be the period of $f_{1,0}$. If we pick two large primes $p_1\equiv c\pmod {q}$ and $p_1'\equiv bc\pmod{q}$ for arbitrary $b,c\in (\mathbb{Z}/q\mathbb{Z})^{\times}$ with $p_1\approx p_1'$ (using the the prime number theorem), we get
 \begin{align*}
 f_{1,0}(acp_2) \overline{G(p_1)}\approx  f_{1,0}(abcp_2) \overline{G(p_1')},   
 \end{align*}
 for ``most'' $p_1,p_1',p_2$, since the averages $f_d(a)$ are slowly varying as a function of $d$ (see equation \eqref{fd1d2} for the precise meaning of this). Choosing $p_2\equiv 1 \pmod{q}$, we see that the quotient $f_{1,0}(ac)/f_{1,0}(abc)$ is independent of $a$ (since $p_1,p_1'$ were independent of $a$). Substituting then $a=a_1$ and $a=a_2$ to the quotient, we get the approximate identity
 \begin{align}\label{eq7}
f_{1,0}(a_1c)f_{1,0}(a_2bc)\approx f_{1,0}(a_1bc)f_{1,0}(a_2c); 
 \end{align}
 see Proposition \ref{prop1} for a precise version of this, where we need to average over $c$ to make the argument rigorous. We may assume that $f(a_0)\neq 0$ for some $a_0$, as otherwise there is nothing to prove, and this leads to $f_{1,0}(a_0)\neq 0$. Taking $a_1\equiv a_0c^{-1} \pmod{q}$, $a_2\equiv a_0\pmod{q}$ in  \eqref{eq7}, we are led to 
\begin{align*}
f_{1,0}(a_0)f_{1,0}(a_0bc)\approx f_{1,0}(a_0b)f_{1,0}(a_0c)    
\end{align*}
Thus,  the function $\psi(x)=f_{1,0}(a_0x)/f_{1,0}(a_0)$ satisfies the approximate quasimorphism equation
$$ \psi(b_1 b_2) \approx \psi(b_1) \psi(b_2)$$
for $b_1,b_2 \in (\Z/q\Z)^\times$ ranging in the invertible residue classes in $\Z/q\Z$ and some unknown function $\psi: (\Z/q\Z)^\times \to \C$ (to make the above deductions rigorous, we need to take as $\psi(x)$ an averaged version of $x\mapsto f_{1,0}(a_0x)/f_{1,0}(a_0)$). Moreover, the function $\psi(x)$ takes values comparable to $1$.  Of course, Dirichlet characters obey the quasimorphism equation exactly; and we can use standard ``cocycle straightening'' arguments to show conversely that any solution to the quasimorphism equation must be very close to a Dirichlet character $\chi$ (see Lemma \ref{dirs} for a precise statement).  This will be used to show that $f_{1,0}$ and $f_d$ are essentially $\chi$-isotypic.

Once this isotopy property is established, one can then return to \eqref{p1p2} and analyse the dependence of various components of \eqref{p1p2} 
on the Archimedean magnitudes of $p_1,p_2$ rather than their residues mod $q$.  One can eventually transform this equation again to the quasimorphism equation, but this time on the multiplicative group $\R^+$ rather than $(\Z/q\Z)^\times$ (also, the functions $\psi$ will be ``log-Lipschitz'' in a certain sense).  Now it is the Archimedean characters $n \mapsto n^{it}$ that are the model solutions of this equation, and we will again be able to show that all other solutions to this equation are close to an Archimedean character (see Lemma \ref{dira} for a precise statement).  Once one has extracted both the Dirichlet character $\chi$ and the Archimedean character $n \mapsto n^{it}$ in this fashion, the rest of Theorem \ref{main} can be established by some routine calculations.

\subsection{Acknowledgments}

We thank the anonymous referees for careful reading of the paper and for useful comments.

TT was supported by a Simons Investigator grant, the James and Carol Collins Chair, the Mathematical Analysis \&
Application Research Fund Endowment, and by NSF grant DMS-1266164.

JT was supported by UTUGS Graduate School and project number 293876 of the Academy of Finland. He thanks UCLA for excellent working conditions during a visit there in April 2018, during which a large proportion of this work was completed.

\subsection{Notation}\label{subsec: not}

We use the usual asymptotic notation $X \ll Y$, $Y \gg X$, or $X = O(Y)$ to denote the bound $|X| \leq CY$ for some constant $C$.  If $C$ needs to depend on parameters, we will denote this by subscripts, thus for instance $X \ll_k Y$ denotes the estimate $|X| \leq C_k Y$ for some $C_k$ depending on $k$.  We also write $o_{n \to \infty}(Y)$ for a quantity bounded in magnitude by $c(n)Y$ for some $c(n)$ that goes to zero as $n \to \infty$ (holding all other parameters fixed). For any set $\mathcal{X}\subset \mathbb{N}$ with infinite complement, we define the limit operator $\lim_{X\to \infty; X\not \in \mathcal{X}} f(X)$ as $\lim_{n\to \infty} f(x_n)$, where $x_1,x_2,\ldots$ are the elements of the complement $\mathbb{N}\setminus \mathcal{X}$ in strictly increasing order.

We use $a\ (q)$ to denote the residue class of $a$ modulo $q$.  If $E$ is a set, we write $1_E$ for its indicator function, thus $1_E(n)=1$ when $n \in E$ and $1_E(n)=0$ otherwise. 

We use the following standard multiplicative functions throughout the paper:
\begin{itemize}
\item The \emph{Liouville function} $\lambda$, which is the $1$-bounded completely multiplicative function with $\lambda(p)=-1$ for all primes $p$;
\item The \emph{M\"obius function} $\mu$, which is equal to $\lambda$ at square-free numbers and $0$ elsewhere.
\item \emph{Dirichlet characters} $\chi$, which are $1$-bounded completely multiplicative functions of some period $q$, with $\chi(n)$ non-zero precisely when $n$ is coprime to $q$; and
\item \emph{Archimedean characters} $n \mapsto n^{it}$, where $t$ is a real number.

\item \emph{twisted Dirichlet characters} $n \mapsto \chi(n)n^{it}$, which are the product of a Dirichlet character and an Archimedean character.
\end{itemize}

In the arguments that follow, asymptotic averages of various types feature frequently, so we introduce some abbreviations for them.

\begin{definition}[Asymptotic averaging notation]
If $f: \N \to \C$ is a function, we define the asymptotic average
$$ \E_{n \in \N} f(n) \coloneqq \lim_{X \to \infty} \E_{n \leq X} f(n) $$
provided that the limit exists.  We adopt the convention that assertions such as $\E_{n \in \N} f(n) = \alpha$ are automatically false if the limit involved does not exist.  Similarly define $\E_{n \in \N}^{\log} f(n)$ and $\E_{n \in \N}^{\log\log} f(n)$.  If $f: {\P} \to \C$ is a function, we similarly define
$$ \E_{p \in \P} f(p) \coloneqq \lim_{X \to \infty} \E_{p \leq X} f(p) $$
and
$$ \E_{p \in \P}^{\log} f(p) \coloneqq \lim_{X \to \infty} \E_{p \leq X}^{\log} f(p).$$
Moreover, given a generalised limit functional $\lim^*_{X \to \infty}$, we define the corresponding asymptotic limits $\E^*_{n \in \N}$, $\E^{\log,*}_{n \in \N}$, $\E^{\log\log,*}_{n \in \N}$, $\E^*_{p \in {\P}}$, $\E^{\log,*}_{p \in {\P}}$ by replacing the ordinary limit functional by the generalised limit, thus for instance
$$ \E^{\log,*}_{n \in \N} f(n) \coloneqq \lim^*_{X \to \infty} \E^{\log}_{n \leq X} f(n).$$
If an ordinary asymptotic limit such as $\E^{\log}_{n \in \N} f(n)$ exists, then $\E^{\log,*}_{n \in \N} f(n)$ will attain the same value; but the latter limit exists for all bounded sequences $f$, whereas the ordinary limit need not exist.  In later parts of the paper we will also need an additional generalised limit $\lim^{**}_{X \to \infty}$, and one can then define generalised asymptotic averages such as $\E^{\log,**}_{n \in \N} f(n)$ accordingly. 
\end{definition}

\begin{remark}\label{avg-com}  If $f$ is a bounded sequence and $\alpha$ is a complex number, a standard summation by parts exercise shows that the statement $\E_{n \in \N} f(n) = \alpha$ implies $\E_{n \in \N}^{\log} f(n) = \alpha$, which in turn implies $\E_{n \in \N}^{\log\log} f(n) = \alpha$, and similarly $\E_{p \in \P} f(p) = \alpha$ implies $\E_{p \in {\P}}^{\log} f(p) = \alpha$; however, the converse implications can be highly non-trivial or even false.  For instance, as mentioned earlier,  it is not difficult to show that $\E_{n \in \N}^{\log} n^{it} = 0$ for any $t\neq 0$, but the limit $\E_{n \in \N} n^{it}$ does not exist.  (On the other hand, from the prime number theorem and partial summation one has $\E_{p \in \P} p^{it} = 0$.) In the same spirit, if $A$ is the set of integers whose decimal expansion has leading digit $1$, then one easily computes ``Benford's law'' $\E_{n\in \N}^{\log}1_{A}(n)=(\log 2)/(\log 10)$, whereas $\E_{n\in \N} 1_{A}(n)$ fails to exist.
\end{remark}

\section{Proof of main theorem}\label{main-sec}

In this section we establish Theorem \ref{main}.  We first establish a version of the Furstenberg correspondence principle.

\begin{proposition}[Furstenberg correspondence principle]\label{corr} Let the notation and hypotheses be as in Theorem \ref{main}.  Then for any real number $d > 0$, there exist random functions $\mathbf{g}^{(d)}_1,\dots,\mathbf{g}^{(d)}_k: \Z \to \Disk$ and a random profinite integer\footnote{The \emph{profinite integers} $\hat \Z$ are the inverse limit of the cyclic groups $\Z/q\Z$, with the weakest topology that makes the reduction maps  $n \mapsto n\ (q)$ continuous.  This is a compact abelian group and therefore it has a well-defined probability Haar measure.} $\mathbf{n}^{(d)} \in \hat \Z$, all defined on a common probability space $\Omega^{(d)}$, such that
$$ \E^{(d)} F( ((\mathbf{g}^{(d)}_i(h))_{1 \leq i \leq k, -N \leq h \leq N}, \mathbf{n}^{(d)}\ (q))) = \lim^{*}_{X \to \infty} \E_{n \leq X/d} F((g_i(n+h))_{1 \leq i \leq k, -N \leq h \leq N}, n\ (q))$$
for any natural numbers $N,q$ and any continuous function $F: \Disk^{k(2N+1)}\times \mathbb{Z}/q\mathbb{Z} \to \C$, where $\E^{(d)}$ denotes the expectation on the probability space $\Omega^{(d)}$.  Furthermore, the random variables $\mathbf{g}^{(d)}_1,\dots,\mathbf{g}^{(d)}_k: \Z \to \Disk$ and $\mathbf{n}^{(d)} \in \hat \Z$ are a stationary process, by which we mean that for any natural number $N$, the joint distribution of $(\mathbf{g}^{(d)}_i(n+h))_{1 \leq i \leq k, -N \leq h \leq N}$ and $\mathbf{n}^{(d)}+n$ does not depend on $n$ as $n$ ranges over the integers.
\end{proposition}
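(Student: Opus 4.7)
The plan is to realise the desired random variables as coordinate projections on a probability measure built on the compact metrizable product space $\Omega := \Disk^{k\Z} \times \hat\Z$, whose moments against cylinder test functions are dictated by the prescribed asymptotic averages. First I would define a linear functional $\Lambda^{(d)}$ on continuous cylinder functions: given a continuous $F$ depending only on coordinates $(x_{i,h})_{1 \le i \le k,\, |h| \le N}$ and on the residue $y\ (q)$ of the $\hat\Z$-factor, set
\[
\Lambda^{(d)}(F) \coloneqq \lim^{*}_{X \to \infty} \E_{n \leq X/d} F\bigl((g_i(n+h))_{1\le i\le k,\,|h|\le N},\, n\ (q)\bigr),
\]
where integers $n$ are embedded in $\hat\Z$ via simultaneous reduction at all moduli. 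This value is independent of the representation $(N,q)$, linear in $F$, and bounded in supremum norm by $1$ thanks to the defining property $|\lim^{*}_X a(X)| \le \limsup_X |a(X)|$.

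Next I would extend $\Lambda^{(d)}$ to all of $C(\Omega)$. The algebra of continuous cylinder functions contains the constants, is closed under products and complex conjugation, and separates points of $\Omega$ (two distinct profinite integers differ modulo some $q$ by definition of the topology on $\hat\Z$, while two distinct sequences in $\Disk^{k\Z}$ differ at some coordinate), so by Stone--Weierstrass it is uniformly dense in $C(\Omega)$. Hence $\Lambda^{(d)}$ extends uniquely to a bounded linear functional on $C(\Omega)$. Positivity of $\Lambda^{(d)}$ follows from positivity of the finite unweighted averages combined with the hypothesis that $\lim^{*}$ preserves non-negativity; together with $\Lambda^{(d)}(1) = 1$, the Riesz--Markov--Kakutani theorem produces a unique Borel probability measure $\mu^{(d)}$ on $\Omega$ representing $\Lambda^{(d)}$. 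Taking $\Omega^{(d)} \coloneqq (\Omega, \mu^{(d)})$ and declaring $\mathbf{g}^{(d)}_i(h), \mathbf{n}^{(d)}$ to be the coordinate projections, the identity in the proposition becomes, by construction, exactly $\int F\, d\mu^{(d)} = \Lambda^{(d)}(F)$ for the cylinder class.

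Stationarity then reduces to a one-line translation-invariance check. For any fixed $n_0 \in \Z$, reindexing the sum in $\E_{n \le X/d} F((g_i(n+n_0+h))_{i,h},\, n+n_0\ (q))$ replaces the summation range $\{1,\dots,\lfloor X/d\rfloor\}$ by $\{1+n_0,\dots,\lfloor X/d\rfloor+n_0\}$, which alters the average by at most $O_{\|F\|_\infty}(|n_0|\, d / X) = o_{X \to \infty}(1)$. Because $\lim^{*}$ agrees with the ordinary limit on convergent sequences, the generalised limits of the shifted and unshifted averages coincide, i.e.\ the joint law of $((\mathbf{g}^{(d)}_i(n_0 + h))_{i,h}, \mathbf{n}^{(d)} + n_0)$ equals that of $((\mathbf{g}^{(d)}_i(h))_{i,h}, \mathbf{n}^{(d)})$.

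The only genuine bookkeeping point---and the main (mild) obstacle---is verifying that the continuous cylinder functions form a uniformly dense subalgebra of $C(\Omega)$; everything else is a routine adaptation of the Furstenberg correspondence principle already employed in \cite{tt}, with the sole modification being that the averaging length is $X/d$ instead of $X$ (which plays no algebraic role in any of the positivity, density, or stationarity arguments above).
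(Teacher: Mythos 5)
Your proof is correct and recapitulates the standard Furstenberg correspondence argument (linear functional via $\lim^*$ on cylinder test functions, Stone--Weierstrass density, Riesz--Markov--Kakutani representation, coordinate projections, and a translation-invariance check for stationarity), which is precisely the content the paper invokes when it cites \cite[Proposition 3.1]{tt} and notes that replacing the logarithmic averaging there by $\E_{n \leq X/d}$ does not affect the argument. The two are the same approach; you have merely written out in full what the paper cites.
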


\begin{proof}  Up to some minor notational changes, this is essentially \cite[Proposition 3.1]{tt}, applied once for each value of $d$. The only difference is that the logarithmic averaging $\E^{\log}_{x_m/w_m \leq n \leq x_m}$ there has been replaced by the non-logarithmic averaging $\E_{n \leq X/d}$.  However, an inspection of the arguments reveal that the proof of the proposition is essentially unaffected by this change.
\end{proof}

Let $G: \N \to \Disk$ denote the multiplicative function $G \coloneqq g_1 \cdots g_k$.  We now adapt the entropy decrement arguments from \cite[\S 3]{tt} to establish the approximate relation
\begin{equation}\label{fdap}
 f_d(ap) \approx f_{dp}(a) G(p)
\end{equation}
for integers $a$, real numbers $d>0$, and ``most'' primes $p$.

Fix $a,d$, and let $p$ be a prime.  From \eqref{fda-def} we have
$$
 f_{dp}(a) G(p) = \lim^{*}_{x \to \infty} \E_{n \leq x/dp} g_1(p) g_1(n+ah_1) \cdots g_k(p) g_k(n+ah_k).
$$
From multiplicativity, we can write $g_j(p) g_j(n+ah_j)$ as $g_j(pn + aph_j)$ unless $n = -ah_j\ (p)$.   The latter case contributes $O\left(\frac{1}{p}\right)$ to the above limit (where we allow implied constants to depend on $k$), thus
$$
 f_{dp}(a) G(p) = \lim^{*}_{x \to \infty} \E_{n \leq x/dp} g_1(pn+aph_1) \cdots g_k(pn+aph_k) + O\left( \frac{1}{p} \right).$$
If we now make $pn$ rather than $n$ the variable of summation, we conclude that
$$
 f_{dp}(a) G(p) = \lim^{*}_{x \to \infty} \E_{n \leq x/d} g_1(n+aph_1) \cdots g_k(n+aph_k) p 1_{p|n} + O\left( \frac{1}{p} \right).$$
Comparing this with \eqref{fda-def}, we conclude that
$$
 f_{dp}(a) G(p) - f_d(ap) = \lim^{*}_{x \to \infty} \E_{n \leq x/d} g_1(n+aph_1) \cdots g_k(n+aph_k) (p 1_{p|n}-1) + O\left( \frac{1}{p} \right)$$
and hence by Proposition \ref{corr}
\begin{equation}\label{fdag}
f_{dp}(a) G(p) - f_d(ap) = \E^{(d)} \mathbf{g}^{(d)}_1(aph_1) \cdots \mathbf{g}^{(d)}_k(aph_k) (p 1_{p|\mathbf{n}^{(d)}}-1) + O\left( \frac{1}{p} \right).
\end{equation}

On the other hand, by repeating the proof of \cite[Theorem 3.6]{tt} verbatim (see also \cite[Remark 3.7]{tt}), we have the following general estimate:

\begin{proposition}[Entropy decrement argument]\label{eda}
Let  $\mathbf{g}_1,\dots,\mathbf{g}_k: \Z \to \Disk$ be random functions and $\mathbf{n} \in \hat \Z$ be a stationary process, let $a,h_1,\dots,h_k$ be integers, and let $0 < \eps < \frac{1}{2}$ be real.  Then one has
$$ \E_{2^m \leq p < 2^{m+1}} |\mathbb{E}\mathbf{g}_1(aph_1) \cdots \mathbf{g}_k(aph_k) (p 1_{p|\mathbf{n}}-1) | \leq \eps$$
for all natural numbers $m$ outside of an exceptional set ${\mathcal M}$ obeying the bound
\begin{equation}\label{eps}
 \sum_{m \in {\mathcal M}} \frac{1}{m} \ll_{a, h_1,\dots,h_k} \eps^{-4} \log \frac{1}{\eps}.
\end{equation}
\end{proposition}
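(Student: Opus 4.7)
The plan is to carry out the entropy decrement argument of \cite[\S 3]{tt}. The central quantity is, for each prime $p$ and each natural number $N$, the Shannon mutual information
$$I_{N,p} \coloneqq I\bigl( (\mathbf{g}_i(n))_{1 \leq i \leq k,\ |n| \leq N};\ \mathbf{n}\ (p) \bigr).$$
Because the process is produced by Proposition \ref{corr} from Haar measure on $\hat \Z$, the reduction $\mathbf{n}\ (p)$ is uniform on $\Z/p\Z$, so the multiplier $p 1_{p|\mathbf{n}} - 1$ has mean zero and $L^2$-norm $\sqrt{p-1}$. Provided $N$ is large enough (say $N \geq |a|\,\max_j |h_j|\cdot p$) that the positions $aph_j$ all lie in the window, a chi-squared refinement of Pinsker's inequality combined with the $L^2$ control on the multiplier converts smallness of $I_{N,p}$ into smallness of the correlation: schematically,
$$|\E\, \mathbf{g}_1(aph_1) \cdots \mathbf{g}_k(aph_k) (p 1_{p|\mathbf{n}} - 1)| \ll \sqrt{p\, I_{N,p}}.$$

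The second step is to bound $I_{N,p}$ on average. After an initial $\eps$-discretisation of the $\Disk$-valued $\mathbf{g}_i$, the entropy $H(\mathbf{X}_N)$ of a window $\mathbf{X}_N \coloneqq (\mathbf{g}_i(n))_{i,\ |n| \leq N}$ is bounded by $O(N \log(1/\eps))$. Stationarity together with the Shannon chain rule show that the conditional joint entropy of $p$-spaced translates of $\mathbf{X}_N$ given $\mathbf{n}\ (p)$ decomposes additively, whereas without conditioning it is only subadditive. Telescoping this discrepancy over a geometric sequence of window sizes $N_m$ dominating $|a|\max_j |h_j| \cdot 2^{m+1}$ (which is precisely where the dependence on $a,h_1,\dots,h_k$ in the final bound enters) yields the key entropy-decrement estimate
$$\sum_m \frac{1}{m}\, \E_{2^m \leq p < 2^{m+1}} I_{N_m,p} \ll_{a, h_1,\dots,h_k} \log(1/\eps).$$

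Combining the two inputs via Cauchy--Schwarz in $p$, define
$$\mathcal{M} \coloneqq \Bigl\{ m : \E_{2^m \leq p < 2^{m+1}} I_{N_m,p} > c\eps^{2}\cdot 2^{-m} \Bigr\}.$$
Markov's inequality applied to the entropy estimate above gives $\sum_{m \in \mathcal{M}} 1/m \ll \eps^{-4} \log(1/\eps)$, where the $\eps^{-4}$ arises from squaring the Pinsker/$L^2$ step and the $\log(1/\eps)$ from the discretisation; for $m \notin \mathcal{M}$ the desired dyadic average bound follows directly from the first step. The main obstacle is the careful bookkeeping: one must choose the window sizes $N_m$ so that they accommodate every prime in $[2^m, 2^{m+1})$ together with every shift $aph_j$, balance the discretisation scale against the resulting alphabet entropy, and track the $\eps$-dependence throughout to obtain the sharp $\eps^{-4}\log(1/\eps)$ bound. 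Since the hypotheses of this proposition match those of \cite[Theorem 3.6]{tt} (the only upstream change is that the Furstenberg correspondence of Proposition \ref{corr} now uses unweighted rather than logarithmic averaging, a substitution that preserves the stationarity properties used in the argument), this bookkeeping can be imported without modification.
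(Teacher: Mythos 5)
Your proposal matches the paper's treatment, which offers no independent proof and instead asserts that Proposition \ref{eda} follows by repeating the proof of \cite[Theorem 3.6]{tt} verbatim (see also \cite[Remark 3.7]{tt})---exactly your conclusion. The one wrinkle in your sketch is that the unconditioned mutual informations $I_{N,p}$ between the window and a single residue $\mathbf{n}\ (p)$ do not telescope across dyadic scales $m$; the entropy decrement step in \cite{tt} works with the \emph{conditional} mutual informations $\mathbf{I}\bigl(\mathbf{X}_m : \mathbf{Y}_m \mid \mathbf{Y}_{<m}\bigr)$, where $\mathbf{Y}_m$ collects the residues at all primes in $[2^m,2^{m+1})$ and $\mathbf{Y}_{<m}$ those at the smaller ranges, so that the chain rule makes the normalised sum over $m$ genuinely telescopic---but you are right that this is part of the bookkeeping you defer to that reference.
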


Note that the bound \eqref{eps} is uniform in the random functions $\mathbf{g}_1,\dots,\mathbf{g}_k$ (although the set $\mathcal{M}$ may depend on these functions). Summing the result over different dyadic scales gives us the following version of \eqref{fdap}.

\begin{proposition}[Approximate isotopy]\label{dollop}  Let the notation and hypotheses be as in Theorem \ref{main}.  Let $a$ be an integer, and let $\eps > 0$ be real.  Then for sufficiently large $P$, we have
$$ \sup_{d >0} \E^{\log}_{p \leq P} |f_{dp}(a) G(p) - f_d(ap)| \leq \eps$$
where the supremum is over positive reals.
\end{proposition}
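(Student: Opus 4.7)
The plan is to combine the pointwise identity \eqref{fdag} with the entropy decrement estimate (Proposition \ref{eda}), and then convert the resulting dyadic control into logarithmic control uniformly in $d$.

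First, for each real $d>0$, apply Proposition \ref{corr} to produce the stationary process $\mathbf{g}^{(d)}_1,\dots,\mathbf{g}^{(d)}_k,\mathbf{n}^{(d)}$ on the probability space $\Omega^{(d)}$. The identity \eqref{fdag} already derived above the proposition shows that
\[
 |f_{dp}(a)G(p)-f_d(ap)| \;\le\; \bigl|\E^{(d)}\mathbf{g}^{(d)}_1(aph_1)\cdots\mathbf{g}^{(d)}_k(aph_k)(p1_{p\mid\mathbf{n}^{(d)}}-1)\bigr| \;+\; O\!\left(\tfrac{1}{p}\right).
\]
Set $\eps':=\eps/3$ and apply Proposition \ref{eda} with $\eps'$ to this stationary process. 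For each $d$ this yields an exceptional set $\mathcal{M}_d\subset \N$ of dyadic scales with
\[
\sum_{m\in\mathcal{M}_d}\frac{1}{m}\;\ll_{a,h_1,\dots,h_k}\;(\eps')^{-4}\log\tfrac{1}{\eps'}=:C(\eps),
\]
such that for every $m\notin\mathcal{M}_d$ one has the dyadic bound
\[
\E_{2^m\le p<2^{m+1}} |f_{dp}(a)G(p)-f_d(ap)| \;\le\; \eps' + O(2^{-m}).
\]
The key point is that the bound $C(\eps)$ is independent of $d$, since Proposition \ref{eda} is uniform in the underlying random functions.

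Next, convert dyadic averages to a logarithmic average. By Mertens' theorem the logarithmic mass of the $m$-th dyadic block of primes is $\sum_{2^m\le p<2^{m+1}}\frac{1}{p}\asymp \frac{1}{m}$, and $\sum_{p\le P}\frac{1}{p}=\log\log P+O(1)$. Splitting $[1,P]$ into dyadic intervals, using the trivial bound $|f_{dp}(a)G(p)-f_d(ap)|\le 2$ on scales $m\in\mathcal{M}_d$ and the bound $\eps'+O(2^{-m})$ on scales $m\notin\mathcal{M}_d$, we obtain
\[
\sum_{p\le P}\frac{|f_{dp}(a)G(p)-f_d(ap)|}{p}\;\ll\;\eps'\log\log P \;+\; C(\eps) \;+\; O(1),
\]
and dividing by $\sum_{p\le P}\frac{1}{p}\asymp \log\log P$ gives
\[
\E^{\log}_{p\le P}|f_{dp}(a)G(p)-f_d(ap)| \;\le\; \eps' + O\!\left(\frac{C(\eps)}{\log\log P}\right).
\]
Choosing $P$ so large that the error term is at most $\eps'$, and recalling $3\eps'=\eps$, yields the desired bound uniformly in $d>0$.

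I do not expect a serious obstacle here: the identity \eqref{fdag} has already been established, Proposition \ref{eda} is quoted, and the remaining work is the standard dyadic-to-logarithmic passage. The only point requiring mild care is the uniformity in $d$, which is guaranteed by the fact that the estimate in Proposition \ref{eda} depends only on $a,h_1,\dots,h_k,\eps$ and not on the particular stationary process, so the same value of $P$ works for all $d>0$ simultaneously.
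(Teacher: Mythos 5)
Your proposal is correct and follows essentially the same route as the paper's proof: both invoke the identity \eqref{fdag}, apply Proposition \ref{eda} to obtain dyadic control with an exceptional set of scales whose $\sum 1/m$ mass is bounded uniformly in $d$, pass from dyadic to logarithmic prime averages, and absorb the exceptional contribution by taking $P$ large. The only differences are cosmetic: the paper applies Proposition \ref{eda} with $\eps^2$ in place of your $\eps/3$ (so its exceptional-set bound reads $\eps^{-8}\log(1/\eps)$ rather than $\eps^{-4}\log(1/\eps)$), and it compresses the dyadic-to-logarithmic conversion into a single line using the prime number theorem.
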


A key technical point for our application is that while $P$ may depend on $a,\eps$, it can be taken to be uniform in $d$.

\begin{proof}   Let $a,\eps,P$ be as in the proposition, and let $d >0$. We may assume that $\varepsilon>0$ is small.  By the prime number theorem, we have
$$ \E^{\log}_{p \leq P} |f_{dp}(a) G(p) - f_d(ap)|  \ll \E^{\log}_{m \leq (\log P)/(\log 2)} \E_{2^m \leq p < 2^{m+1}} |f_{dp}(a) G(p) - f_d(ap)|.$$
By \eqref{fdag} and Proposition \ref{eda}, we have
$$ \E_{2^m \leq p < 2^{m+1}} |f_{dp}(a) G(p) - f_d(ap)| \leq \eps^2$$
for all $m$ outside of an exceptional set ${\mathcal M}_{a,\eps,d}$ obeying the bound
$$ \sum_{m \in {\mathcal M}_{a,\eps,d}} \frac{1}{m} \ll_{a, h_1,\dots,h_k} \eps^{-8} \log \frac{1}{\eps}.$$
In the exceptional set ${\mathcal M}_{a,\eps,d}$, we use the trivial bound
$$ \E_{2^m \leq p < 2^{m+1}} |f_{dp}(a) G(p) - f_d(ap)| \ll 1$$
to conclude that
$$ \E^{\log}_{p \leq P} |f_{dp}(a) G(p) - f_d(ap)| \ll \eps^2 + O_{a,h_1,\dots,h_k}\left( \frac{\eps^{-8} \log \frac{1}{\eps}}{\log\log P} \right),$$
and the claim follows by choosing $P$ large in terms of $a,\varepsilon, h_1,\ldots, h_k$.
\end{proof}

As in \cite{tt}, we iterate this approximate formula to obtain

\begin{corollary}\label{mango}  For any integer $a$ one has
$$ \limsup_{P_1 \to \infty} \limsup_{P_2 \to \infty} \E_{p_1 \leq P_1}^{\log} \E_{p_2 \leq P_2}^{\log} |f_{p_1 p_2}(a) G(p_1) G(p_2) - f_1(ap_1 p_2)| = 0$$
\end{corollary}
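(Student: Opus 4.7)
My plan is to apply Proposition \ref{dollop} twice, interpolating through the intermediate quantity $f_{p_2}(ap_1) G(p_2)$. Inserting this telescopically and using $|G(p_2)| \leq 1$, the triangle inequality gives
\begin{equation*}
|f_{p_1 p_2}(a) G(p_1) G(p_2) - f_1(ap_1 p_2)| \leq |f_{p_1 p_2}(a) G(p_1) - f_{p_2}(ap_1)| + |f_{p_2}(ap_1) G(p_2) - f_1(ap_1 p_2)|.
\end{equation*}
Fix $\varepsilon > 0$; I will bound the contribution of each of the two summands to the double average by $\varepsilon/2$.

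For the first summand, the expression is exactly of the form $|f_{dp}(a) G(p) - f_d(ap)|$ with $d = p_2$ and $p = p_1$. Proposition \ref{dollop}, applied with $\varepsilon/2$ in place of $\varepsilon$, therefore provides a threshold $P_1 = P_1(a,\varepsilon)$ --- independent of $p_2$, thanks crucially to the $\sup_{d > 0}$ uniformity --- such that the inner $p_1$-average is at most $\varepsilon/2$ for every value of $p_2$. Averaging over $p_2 \leq P_2$ preserves this pointwise bound for every $P_2$.

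For the second summand, the expression has the form $|f_{dp}(ap_1) G(p) - f_d(ap_1 \cdot p)|$ with $d = 1$ and $p = p_2$, but the shift parameter has become $ap_1$. For each fixed $p_1$, Proposition \ref{dollop} (with shift $ap_1$) yields a threshold $P_2^*(p_1) = P_2^*(p_1,a,\varepsilon)$ such that the inner $p_2$-average is at most $\varepsilon/2$ once $P_2 \geq P_2^*(p_1)$. For $P_1$ already fixed, only finitely many primes $p_1 \leq P_1$ arise, so one can take $P_2 \geq \max_{p_1 \leq P_1} P_2^*(p_1)$; averaging in $p_1$ then gives the $\varepsilon/2$ bound.

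Combining, for $P_1 \geq P_1(a,\varepsilon)$ and $P_2$ sufficiently large in terms of $P_1$, $a$, and $\varepsilon$, the double average is at most $\varepsilon$. Taking $\limsup_{P_2 \to \infty}$ first and then $\limsup_{P_1 \to \infty}$, and letting $\varepsilon \to 0$, yields the conclusion. The main subtlety --- and precisely the reason for the nested $\limsup_{P_1 \to \infty} \limsup_{P_2 \to \infty}$ structure rather than a joint limit --- is the dependence of $P_2^*(p_1)$ on $p_1$ in the second application of Proposition \ref{dollop}, which forces one to fix $P_1$ before sending $P_2 \to \infty$.
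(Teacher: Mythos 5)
Your argument is correct and essentially identical to the paper's: the same telescoping through the intermediate quantity $f_{p_2}(ap_1)G(p_2)$, the same two applications of Proposition \ref{dollop} (first with $d=p_2$, $p=p_1$, shift $a$, exploiting the $\sup_{d>0}$ uniformity to get a $P_1$ independent of $p_2$; then with $d=1$, $p=p_2$, shift $ap_1$, making $P_2$ depend on $P_1$), and the same triangle inequality and iterated-$\limsup$ conclusion. Your closing remark about why the nested order of limits is forced by the $p_1$-dependence in the second application is a nice explication of a point the paper leaves implicit.
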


\begin{proof}  Let $a$ be an integer, let $\eps>0$ be real, let $P_1$ be sufficiently large depending on $a,\eps$, and let $P_2$ be sufficiently large depending on $a,\eps,P_1$.  From Proposition \ref{dollop} one has
$$ \E^{\log}_{p_1 \leq P_1} |f_{p_1 p_2}(a) G(p_1) - f_{p_2}(ap_1)| \ll \eps$$
for all primes $p_2$, and hence
$$ \E_{p_1 \leq P_1}^{\log} \E_{p_2 \leq P_2}^{\log} |f_{p_1 p_2}(a) G(p_1) G(p_2) - f_{p_2}(ap_1) G(p_2)| \ll \eps.$$
On the other hand, from a second application of Proposition \ref{dollop} one has
$$ \E^{\log}_{p_2 \leq P_2} |f_{p_2}(ap_1) G(p_2) - f_{1}(ap_1 p_2)| \ll \eps$$
for all $p_1 \leq P_1$, and hence
$$ \E_{p_1 \leq P_1}^{\log} \E_{p_2 \leq P_2}^{\log} |f_{p_2}(ap_1) G(p_2) - f_1(ap_1 p_2)| \ll \eps.$$
From the triangle inequality we thus have
$$ \E_{p_1 \leq P_1}^{\log} \E_{p_2 \leq P_2}^{\log} |f_{p_1 p_2}(a) G(p_1) G(p_2) - f_1(ap_1 p_2)| \ll \eps$$
under the stated hypotheses on $\eps,P_1,P_2$.  Taking limit superior in $P_2$ and then in $P_1$, we conclude that
$$ \limsup_{P_1 \to \infty} \limsup_{P_2 \to \infty} \E_{p_1 \leq P_1}^{\log} \E_{p_2 \leq P_2}^{\log} |f_{p_1 p_2}(a) G(p_1) G(p_2) - f_1(ap_1 p_2)| \ll \eps$$
for any $\eps>0$, and the claim follows.
\end{proof}

Next, we have the following structural description of $f_1$.

\begin{proposition}\label{pe}  Let $f_1$ be as in Theorem \ref{main}. For any $\eps>0$, one can write
$$ f_{1} = f_{1,0} + g$$
where $f_{1,0} = f_{1,0}^{(\eps)}$ is periodic, and the error $g = g^{(\eps)}$ obeys the bilinear estimate
\begin{equation}\label{unweighted}
 \E_{p_1 \leq x} \E_{p_2 \leq y} \alpha_{p_1} \beta_{p_2} g(a p_1 p_2) \ll \eps 
\end{equation}
as well as the logarithmic counterpart
$$ \E^{\log}_{p_1 \leq x} \E^{\log}_{p_2 \leq y} \alpha_{p_1} \beta_{p_2} g(a p_1 p_2) \ll \eps $$
whenever $a$ is a non-zero integer, $x$ is sufficiently large depending on $a,\eps$; $y$ is sufficiently large depending on $x,a,\eps$; and $\alpha_{p_1}, \beta_{p_2} = O(1)$ are bounded sequences.  
\end{proposition}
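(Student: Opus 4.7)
The plan is to adapt the proof of the logarithmically averaged analogue, \cite[Proposition 5.1]{tt}: represent $f_1$ as a multiple correlation of a stationary process, identify the resulting correlation sequence as a nilsequence, decompose the nilsequence into a periodic part (which becomes $f_{1,0}$) and an irrational remainder (which becomes $g$), and verify the bilinear estimates for the remainder using quantitative equidistribution of primes on nilmanifolds.

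First I would apply Proposition \ref{corr} at $d=1$ to obtain
$$f_1(a) = \E^{(1)}\, \mathbf{g}^{(1)}_1(ah_1) \cdots \mathbf{g}^{(1)}_k(ah_k),$$
realising $a \mapsto f_1(a)$ as a multiple ergodic correlation of the stationary process $(\mathbf{g}^{(1)}_i)$. By the Host--Kra/Leibman structure theorem for multiple correlations, together with the refinements of Frantzikinakis--Host tailored to correlations of $1$-bounded multiplicative functions, this sequence can be approximated in sup-norm, up to error $\eps$, by a nilsequence $a \mapsto F(T^a x)$ on some nilmanifold $G/\Gamma$. Crucially, this reduction depends only on the stationarity of the process produced by the correspondence principle and is insensitive to whether $f_1$ arose from logarithmic or unweighted averaging.

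Next I would decompose $F$ via Fourier analysis on the horizontal torus of $G/\Gamma$: projecting onto characters of bounded denominator yields a periodic-in-$a$ function $f_{1,0}$, while the orthogonal component (supported on higher-order horizontal characters and vertical oscillations) furnishes the irrational remainder $g$. With this split, $g$ is a bounded nilsequence whose projection onto each rational subtorus of bounded denominator vanishes. For the bilinear estimates, I would write $g(ap_1 p_2) = \tilde{F}(T^{ap_1 p_2} x)$ and apply Cauchy--Schwarz in $p_1$, reducing \eqref{unweighted} to an average over $p_1$ of auto-correlations of $\tilde{F}$ along the two-variable orbit $T^{a p_1 (p_2 - p_2')}$. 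Quantitative equidistribution of primes on nilmanifolds (Green--Tao in the abelian case and its nilpotent extension as used in \cite{le}) then supplies the required cancellation, uniformly in the bounded coefficients $\alpha_{p_1}, \beta_{p_2}$. The logarithmic version follows by the same argument with logarithmic prime averaging.

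The main obstacle I anticipate is arranging that $y$ depends only on $a, x, \eps$ (and not on the coefficients $\alpha, \beta$ or on any internal parameters of the nilsequence decomposition). This forces the nilmanifold equidistribution input to be quantitative and uniform across arbitrary bounded coefficient sequences, rather than merely qualitative — exactly the regime handled by \cite[\S 5]{tt} in the logarithmic setting. Once this uniformity is in place, the unweighted bilinear bound follows because the averaging type enters the argument only through the correspondence principle that produces $f_1$, and not in the nilsequence analysis itself; the same decomposition $f_1 = f_{1,0} + g$ therefore simultaneously satisfies both the unweighted and logarithmic bilinear estimates.
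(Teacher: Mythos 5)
Your proposal follows the same route as the paper: apply the Furstenberg correspondence principle at $d=1$, decompose $f_1$ into a nilsequence plus an error that is small along primes (this is exactly \cite[Corollary~4.6]{tt}), split the nilsequence into a periodic piece and irrational nilcharacters (\cite[Proposition~5.6]{tt}), and absorb the irrational part into $g$ using bilinear equidistribution estimates for primes on nilmanifolds (\cite[Lemma~5.8]{tt}, noting that irrationality of $\chiup$ is preserved under $\chiup \mapsto \chiup(a\cdot)$). The one imprecision is that the nilsequence approximation error from \cite[Corollary~4.6]{tt} is not $\eps$-small in sup-norm as you state, but only small in the sense that $\lim_{x\to\infty}\E_{p\leq x}|f_{1,2}(ap)|=0$; this is still exactly what is needed to absorb it into $g$, and then the logarithmic estimate can be obtained directly or, as the paper observes, simply by summation by parts from the unweighted one.
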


\begin{proof}  We freely use the notation from \cite[Sections 4--5]{tt}.  By summation by parts it suffices to obtain a decomposition obeying \eqref{unweighted}.
By repeating the proof of \cite[Corollary 4.6]{tt} verbatim\footnote{In \cite[Corollary 4.6]{tt}, $a$ was required to be a natural number rather than a non-zero integer, however one can easily adapt the arguments to the case of negative $a$ with only minor modifications (in particular, one has to modify the definition of $\mathbf{X}_m$ slightly to allow $l$ to be negative).}, we can write
$$ f_1 = f_{1,1} + f_{1,2}$$
where $f_{1,1}$ is a nilsequence of some finite degree $D$, and $f_{1,2}$ obeys the asymptotic
$$ \lim_{x \to \infty} \E_{p \leq x} |f_{1,2}(ap)| = 0$$
for any non-zero integer $a$.  We can now neglect the $f_{1,2}$ term as it can be absorbed into the $g$ error.  Next, applying \cite[Proposition 5.6]{tt}, we can decompose
$$ f_{1,1} = f_{1,0} + \sum_{i=1}^D \sum_{j=1}^{J_i} c_{i,j} \chiup_{i,j}$$
for some periodic function $f_{1,0}$, some non-negative integers $J_1,\dots,J_D$, some irrational nilcharacters $\chiup_{i,j}$ of degree $i$, and some linear functionals $c_{i,j}$.  Using \cite[Lemma 5.8]{tt} (noting that if $\chiup$ is an irrational nilcharacter, then so is $\chiup(a \cdot)$) we see that each of the terms $c_{i,j} \chiup_{i,j}$ can be absorbed into the error term $g$.  The claim then follows from the triangle inequality.  
\end{proof}

Finally, we record a simple log-Lipschitz estimate
\begin{equation}\label{fd1d2}
|f_{d_1}(a) - f_{d_2}(a)| \leq 2|\log d_1 - \log d_2|
\end{equation}
for any integer $a$ and any real $d_1,d_2 > 0$; this follows by using \eqref{fda-def} and the triangle inequality to estimate $|f_{d_1}(a)-f_{d_2}(a)|\leq \frac{2|d_1-d_2|}{\max\{d_1,d_2\}}$ and then the mean value theorem to $x\mapsto \log x$.

We return to the proof of Theorem \ref{main}.  If we have
$$  \limsup_{X \to \infty} \E^{\log\log}_{d \leq X} |f_d(a)| = 0$$
for all $a$, then the claim follows by setting $f=0$, so we may assume without loss of generality that there exists an integer $a_0$ such that
$$ \limsup_{X \to \infty} \E^{\log\log}_{d \leq X} |f_d(a_0)| > 0.$$
Thus, by the Hahn-Banach theorem, we may find a generalised limit $\lim^{**}_{X \to \infty}$ (which may or may not be equal to the previous generalised limit $\lim_{X \to \infty}^*$) such that
$$ \lim^{**}_{X \to \infty} \E^{\log\log}_{d \leq X} |f_d(a_0)| > 0,$$
and thus using the generalised limit asymptotic notation associated to $\lim^{**}_{X \to \infty}$ (see Subsection \ref{subsec: not}), we have
\begin{equation}\label{fda}
 \E^{\log\log,**}_{d \in \N} |f_d(a_0)| \gg 1.
\end{equation}

For future reference we record the following convenient lemma relating the averaging operator $\E^{\log\log,**}_{d \in \N}$ with $\E^{\log,**}_{p \in {\P}}$:

\begin{lemma}[Comparing averages over integers and primes]\label{chug}  Let $f: \N \to \C$ be a function which is bounded log-Lipschitz in the sense that there is a constant $C$ such that $|f(d)| \leq C$ and $|f(d)-f(d')| \leq C |\log d - \log d'|$ for all $d,d' \in \N$.  Then for any natural number $a$, one has
$$\limsup_{X\to \infty}|\E^{\log\log}_{d \leq X} f(d)-\E^{\log}_{p \leq X} f(ap)|=0,$$
so in particular
$$ \E^{\log\log,**}_{d \in \N} f(d) = \E^{\log,**}_{p \in {\P}} f(ap).$$
\end{lemma}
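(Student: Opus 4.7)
The plan is to reduce the lemma to an integral comparison. By Mertens' theorem, $\sum_{p\leq X}\tfrac{1}{p}=\log\log X+O(1)$, and a trivial integral comparison gives $\sum_{d\leq X}\tfrac{1}{d\log(1+d)}=\log\log X+O(1)$; so both normalizers are $\log\log X+O(1)$, and after writing the two averages as numerator over denominator, the discrepancy from the differing denominators contributes only $O(C/\log\log X)=o(1)$. Hence it suffices to show
\[
\sum_{p\leq X}\frac{f(ap)}{p}-\sum_{d\leq X}\frac{f(d)}{d\log(1+d)}=O_{C,a}(1),
\]
which after division by $\log\log X$ yields $o(1)$.

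Next I would pass from sums to integrals. By the prime number theorem in the form $\pi(t)=\operatorname{li}(t)+R(t)$ with $R(t)=O(t/\log^{2}t)$, together with Abel summation, $\sum_{p\leq X}\tfrac{f(ap)}{p}=\int_{2}^{X}\tfrac{f(at)}{t\log t}\,dt+\int_{2}^{X}\tfrac{f(at)}{t}\,dR(t)$. The log-Lipschitz hypothesis implies $|d(f(at)/t)|\ll C\,dt/t^{2}$, so after integration by parts the PNT contribution is $O(C\int_{2}^{X}\tfrac{dt}{t\log^{2}t})=O(C)$. Analogously, an Euler--Maclaurin argument (using $\log(1+t)=\log t+O(1/t)$) gives $\sum_{d\leq X}\tfrac{f(d)}{d\log(1+d)}=\int_{2}^{X}\tfrac{f(t)}{t\log t}\,dt+O(C)$.

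It remains to bound $\int_{2}^{X}\tfrac{f(at)-f(t)}{t\log t}\,dt$. The pointwise bound $|f(at)-f(t)|\leq C\log a$ is far too crude --- it would only give $O(C\log a\cdot\log\log X)$. Instead, I substitute $s=at$ in the first piece to get $\int_{2a}^{aX}\tfrac{f(s)}{s\log(s/a)}\,ds$, then split both integrals at $\{2a,X\}$. On the common interval $[2a,X]$ the integrand factors through $\tfrac{1}{\log(s/a)}-\tfrac{1}{\log s}=\tfrac{\log a}{\log s\cdot\log(s/a)}$; after the substitution $u=\log s$ this becomes $\tfrac{\log a}{u(u-\log a)}$, and the partial fraction $\tfrac{1}{u(u-\log a)}=\tfrac{1}{\log a}\bigl(\tfrac{1}{u-\log a}-\tfrac{1}{u}\bigr)$ telescopes to give an integral of size $O(1/\log a)$; the $\log a$ factor cancels, yielding an $O(C)$ contribution. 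The boundary pieces on $[2,2a]$ and $[X,aX]$ contribute $O_{a}(C)$ and $o(1)$, respectively. This completes the proof of the first assertion; the second (equality of generalised limits) follows since any generalised limit annihilates $o(1)$ sequences. The main obstacle is this final dilation-and-telescoping step: without exploiting the ``dilation'' structure of the perturbation, one cannot escape the wasteful logarithmic loss from the naive pointwise bound.
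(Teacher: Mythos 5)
Your proof is correct, but it takes a genuinely different route from the paper's. The paper argues discretely: it bins primes into multiplicative intervals $[d,(1+\eps)d]$, uses the log-Lipschitz hypothesis to replace $f(ap)$ by $f(ad)$ on each bin and the prime number theorem to match the prime density to the weight $\tfrac{1}{d\log d}$, and then applies log-Lipschitz a second time to expand $f(ad)$ as an average of $f(d')$ over $d'\in[ad,a(d+1))$, yielding $\E^{\log\log}_{d\leq X}f(d)$ after reindexing. You instead continuize both sides: Abel summation plus an effective PNT converts the prime average to $\int_2^X \frac{f(at)}{t\log t}\,dt+O(C)$, an Euler--Maclaurin estimate converts the $\log\log$-average to $\int_2^X\frac{f(t)}{t\log t}\,dt+O(C)$, and the dilation $s=at$ together with the partial-fraction identity
$$
\frac{1}{\log(s/a)}-\frac{1}{\log s}=\frac{\log a}{\log s\,\log(s/a)},\qquad \frac{1}{u(u-\log a)}=\frac{1}{\log a}\Bigl(\frac{1}{u-\log a}-\frac{1}{u}\Bigr),
$$
telescopes the $a$-dependence down to $O_a(1)$. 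You correctly flag that the naive pointwise bound $|f(at)-f(t)|\leq C\log a$ loses a $\log\log X$, and the substitution-and-telescoping step is exactly what rescues this. One thing worth making explicit if you flesh this out: $f$ is only defined on $\N$, so the integrals should be read with $f(\lfloor t\rfloor)$ as a step function, and the total-variation estimate $\bigl\|d\tfrac{f(at)}{t}\bigr\|\ll C\,t^{-2}\,dt$ requires accounting for the jump discontinuities (the log-Lipschitz bound $|f(n)-f(n-1)|\ll C/n$ gives exactly the right decay, so this is fine, but it is not free). Your method is more explicitly quantitative, giving an $O(C/\log\log X)$ rate, whereas the paper's binning argument stays at the level of discrete sums and avoids Stieltjes bookkeeping; both are legitimate proofs.
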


\begin{proof}  We allow implied constants to depend on $C,a$.  Let $\eps>0$, and assume $X$ is sufficiently large depending on $C,\eps$.  Then from the prime number theorem and the bounded log-Lipschitz property we have
\begin{align*}
\E^{\log}_{p \leq X} f(ap) &= \frac{1}{\log\log X} \sum_{p \leq X} \frac{f(ap)}{p} + O(\eps) \\
&= \frac{1}{\log\log X} \sum_{d \leq X} \frac{1}{\eps d} \sum_{d \leq p \leq (1+\eps) d} \frac{f(ap)}{p} + O(\eps) \\
&= \frac{1}{\log\log X} \sum_{d \leq X} \frac{1}{\eps d} \sum_{d \leq p \leq (1+\eps) d} \frac{f(ad)}{d} + O(\eps) \\
&= \frac{1}{\log\log X} \sum_{d \leq X} \frac{f(ad)}{d \log(2+d)} + O(\eps). 
\end{align*}
Again by the bounded log-Lipschitz property, we have
\begin{align*}
f(ad)=\frac{1}{a}\sum_{ad\leq d'< a(d+1)}f(d')+O(1/d),    
\end{align*}
and inserting this into the preceding computation, we get
\begin{align*}
\E^{\log}_{p \leq X} f(ap)&=\frac{1}{\log\log X} \sum_{d' \leq aX} f(d') \cdot\frac{1}{a}\sum_{d'/a-1< d\leq d'/a}\frac{1}{d\log (2+d)}+O(\varepsilon)\\
&=\frac{1}{\log\log X} \sum_{d' \leq X} \frac{f(d')}{d'\log(2+d')}+O(\varepsilon).
\end{align*}
Taking the absolute value of the difference of the two sides of this equation, applying $\limsup_{X \to \infty}$ and then sending $\eps \to 0$, we obtain the claim.
\end{proof}

Now, let $\eps>0$ be a sufficiently small parameter. If one had
$$ \sum_p \frac{1-|g_j(p)|}{p} = \infty$$
for some $1 \leq j \leq k$, then by Wirsing's theorem \cite{wirsing} as in \cite[\S 6]{tt} one would have $f_d(a)=0$ for all $a,d$.  Thus we may assume that
$$ \sum_p \frac{1-|g_j(p)|}{p} < \infty$$
for all $j$, which implies in particular that one has 
\begin{equation}\label{gp-inv}
1-\varepsilon\leq |G(p)|\leq 1
\end{equation}
for all but finitely many $p$.  For any integer $a$, we see from Corollary \ref{mango} that
$$ \limsup_{P_1 \to \infty} \limsup_{P_2 \to \infty} \E_{p_1 \leq P_1}^{\log} \E_{p_2 \leq P_2}^{\log} |f_{p_1 p_2}(a) G(p_1) G(p_2) - f_1(ap_1 p_2)| \ll \eps.$$
By \eqref{gp-inv} we then have
$$ \limsup_{P_1 \to \infty} \limsup_{P_2 \to \infty} \E_{p_1 \leq P_1}^{\log} \E_{p_2 \leq P_2}^{\log} |f_{p_1 p_2}(a) - \overline{G(p_1)} \overline{G(p_2)}  f(ap_1 p_2)| \ll \eps$$
Applying Proposition \ref{pe}, we conclude that 
\begin{equation}\label{2m1-0}
\limsup_{P_1 \to \infty} \limsup_{P_2 \to \infty} \E_{p_1 \leq P_1}^{\log} \E_{p_2 \leq P_2}^{\log} |f_{p_1 p_2}(a) - \overline{G(p_1)} \overline{G(p_2)} f_{1,0}(ap_1 p_2)| \ll \eps.
\end{equation}
In particular we have
\begin{equation}\label{2m1}
 \E_{p_1 \in {\P}}^{\log,**} \E_{p_2 \in {\P}}^{\log,**} |f_{p_1 p_2}(a) - \overline{G(p_1)} \overline{G(p_2)} f_{1,0}(ap_1 p_2)| \ll \eps.
\end{equation}

Heuristically, \eqref{2m1} asserts the approximation
\begin{equation}\label{2m1-approx}
 f_{p_1 p_2}(a) \approx \overline{G(p_1)} \overline{G(p_2)} f_{1,0}(ap_1 p_2)
\end{equation}
for ``most'' $a,p_1,p_2$.  This turns out to be a remarkably powerful approximate equation, giving a lot of control on the functions $G$, $f_d$,  and $f_{1,0}$.  Roughly speaking, we will be able to show that the only way to solve \eqref{2m1-approx} (in a manner compatible with \eqref{fda} and \eqref{fd1d2}) is if $G(p) \approx \chi(p) p^{it}$, $f_d(a) \approx f(a) d^{-it}$, and $f_{1,0} \approx f$ for some $\chi$-isotypic $q$-periodic function $f$.  Conversely, it is easy to see that if $G, f_d, f_{1,0}$ are of the above form, then they obey \eqref{2m1-approx}.

We first use \eqref{2m1} to control $f_{1,0}$.  Let $q$ denote the period of $f_{1,0}$ (which depends on $\varepsilon$); by abuse of notation, we view $f_{1,0}$ as a function on $\Z/q\Z$ as well as on $\Z$.  We then have

\begin{proposition}[Initial control on $f_{1,0}$]\label{prop1}  Let $a_0$ be as in \eqref{fda}. We have
\begin{equation}\label{f10-large}
\E_{c \in (\Z/q\Z)^\times} |f_{1,0}( a_0 c)| \gg 1.
\end{equation}
Furthermore, for any integers $a_1,a_2$ and any natural number $b$ coprime to $q$, we have
\begin{equation}\label{fito}
\E_{c \in (\Z/q\Z)^\times} |f_{1,0}(a_1 c)f_{1,0}(a_2bc) - f_{1,0}(a_1 bc) f_{1,0}(a_2 c)| \ll \eps.
\end{equation}
\end{proposition}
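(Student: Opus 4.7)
The plan is to deduce both parts of Proposition \ref{prop1} from the approximate identity \eqref{2m1}, the $q$-periodicity of $f_{1,0}$, the log-Lipschitz bound \eqref{fd1d2}, and standard equidistribution of primes in arithmetic progressions (Dirichlet's theorem, in logarithmic averages).

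For part (i), I would first observe that $d \mapsto |f_d(a_0)|$ is bounded and log-Lipschitz by \eqref{fd1d2}, so Lemma \ref{chug} gives $\E^{\log\log,**}_{d \in \N} |f_d(a_0)| = \E^{\log,**}_{p \in \P} |f_{ap}(a_0)|$ for every natural number $a$. Taking $a = p_1$ a prime and then averaging over $p_1$ collapses the right-hand side to a two-prime average, yielding $\E^{\log,**}_{p_1 \in \P} \E^{\log,**}_{p_2 \in \P} |f_{p_1 p_2}(a_0)| = \E^{\log\log,**}_{d \in \N} |f_d(a_0)| \gg 1$ by \eqref{fda}. Next I would apply \eqref{2m1} with $a = a_0$, the triangle inequality, and \eqref{gp-inv} to replace each $|G(p_i)|$ by $1$ up to $O(\eps)$, obtaining the same lower bound for $\E^{\log,**}_{p_1} \E^{\log,**}_{p_2} |f_{1,0}(a_0 p_1 p_2)|$. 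Finally, the $q$-periodicity of $f_{1,0}$ together with the logarithmic equidistribution of primes in $(\Z/q\Z)^\times$ identifies this average with $\E_{c_1, c_2 \in (\Z/q\Z)^\times} |f_{1,0}(a_0 c_1 c_2)| = \E_{c \in (\Z/q\Z)^\times} |f_{1,0}(a_0 c)|$ (using that $c_1 c_2$ is uniform on the group as $c_1, c_2$ are), which gives \eqref{f10-large} once $\eps$ is sufficiently small.

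For part (ii), the starting point is the pointwise identity
\begin{align*}
f_{1,0}(a_1 c) f_{1,0}(a_2 bc) - f_{1,0}(a_1 bc) f_{1,0}(a_2 c) = f_{1,0}(a_1 p_1 p_2) f_{1,0}(a_2 p_1' p_2) - f_{1,0}(a_1 p_1' p_2) f_{1,0}(a_2 p_1 p_2),
\end{align*}
valid for any primes $p_1 \equiv c$, $p_1' \equiv bc$, $p_2 \equiv 1 \pmod q$ coprime to $q$, thanks to the $q$-periodicity of $f_{1,0}$. I would then apply \eqref{2m1} to each of the four factors $f_{1,0}(a_j p_i p_2)$, replacing it on $(p_1,p_1',p_2)$-average by $G(p_i) G(p_2) f_{p_i p_2}(a_j)$; the shared factor $G(p_1) G(p_1') G(p_2)^2$ has modulus $1 + O(\eps)$ by \eqref{gp-inv}, so the right-hand side becomes, up to $O(\eps)$ on prime averaging,
\begin{align*}
G(p_1) G(p_1') G(p_2)^2 \bigl[f_{p_1 p_2}(a_1) f_{p_1' p_2}(a_2) - f_{p_1' p_2}(a_1) f_{p_1 p_2}(a_2)\bigr].
\end{align*}
The essential cancellation comes from restricting $p_1, p_1'$ to a common short interval $[X, (1+\eps)X]$: then \eqref{fd1d2} gives $f_{p_1 p_2}(a_j) - f_{p_1' p_2}(a_j) = O(\eps)$, so the bracketed expression is $O(\eps)$.

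To render this rigorous, I would simultaneously average $c$ over $(\Z/q\Z)^\times$, primes $p_1, p_1'$ over a common short window $[X, (1+\eps)X]$ with the prescribed residues $c$ and $bc$ modulo $q$, primes $p_2 \equiv 1 \pmod q$, and scales $X$ (logarithmically), taking absolute values before these averages. The equidistribution of primes in arithmetic progressions and in short intervals (in logarithmic averages) would then convert the prime averages back into the target average $\E_c |\cdot|$, giving \eqref{fito}. The main obstacle will be choreographing these multiple averages so that the $\eps$-errors arising from \eqref{2m1}, \eqref{gp-inv}, and \eqref{fd1d2} are all controlled simultaneously; this is handled by fixing $\eps$ first, setting the window width equal to $\eps$, and then sending the scales of $X$, $p_2$, and the generalised-limit parameters to infinity in a suitable hierarchical order.
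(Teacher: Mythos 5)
Your proof of \eqref{f10-large} matches the paper's essentially line for line. For \eqref{fito}, however, your approach differs in a genuinely interesting way: the paper first eliminates $f_{p_1p_2}$ and a single $\overline{G(p_2)}$-factor via \eqref{2m1} (comparing $p_2$ and a nearby $p_2'$ with residue twisted by $b$), and then uses an algebraic identity (the displayed identity beginning $\overline{G(p_2)}(f_{1,0}(a_1cp_1)f_{1,0}(a_2bcp_1) - \cdots)$) to eliminate the remaining $\overline{G(p_2)}$; whereas you start from a clean pointwise identity for the target ``commutator''-type expression, replace all four $f_{1,0}$-factors at once via \eqref{2m1}, factor out $G(p_1)G(p_1')G(p_2)^2$ using \eqref{gp-inv}, and then obtain the cancellation by restricting $p_1,p_1'$ to a common short window so that $f_{p_1p_2}(a_j) - f_{p_1'p_2}(a_j) = O(\eps)$ by \eqref{fd1d2}. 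The underlying mechanism --- eliminating $G$ by pairing two nearby primes and exploiting log-Lipschitz continuity --- is shared, but the bookkeeping is different and your version is arguably more transparent.

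There is one genuine gap: you fix $p_2 \equiv 1\ (q)$. The bound \eqref{2m1} is an $L^1$-in-$(p_1,p_2)$ estimate of size $O(\eps)$, and restricting the $p_2$-average to a single residue class modulo $q$ can inflate this bound by a factor of roughly $\varphi(q)$, since the logarithmic density of primes $\equiv 1\ (q)$ among all primes is about $1/\varphi(q)$ and the error in \eqref{2m1} has no a priori equidistribution across residue classes. Because $q$ is the period of $f_{1,0} = f_{1,0}^{(\eps)}$ from Proposition \ref{pe} and hence depends on $\eps$, a bound of $O(\varphi(q)\eps)$ is useless. The fix is implicit in the paper's argument: do not pin $p_2$ to the residue $1$, but let $p_2$ range over all residue classes $c_2 \in (\Z/q\Z)^\times$. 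Your pointwise identity then produces $f_{1,0}(a_1cc_2)f_{1,0}(a_2bcc_2) - f_{1,0}(a_1bcc_2)f_{1,0}(a_2cc_2)$, and since $cc_2$ is uniform on $(\Z/q\Z)^\times$ as $(c,c_2)$ ranges over $(\Z/q\Z)^\times \times (\Z/q\Z)^\times$, the double average over $c,c_2$ collapses to the single average over $c' = cc_2$ appearing in \eqref{fito}; meanwhile the $p_2$-average is now a full (decomposed) prime average, so \eqref{2m1} applies without loss. With this adjustment, the rest of your argument goes through.
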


\begin{proof}  By Lemma \ref{chug}, \eqref{fda} and \eqref{fd1d2}, we see that
$$\E_{p_2 \in {\P}}^{\log,**} |f_{p_1 p_2}(a_0)| = \E_{d \in \N}^{\log\log,**} |f_{d}(a_0)| \gg 1$$
for any $p_1$, and hence
$$ \E_{p_1 \in {\P}}^{\log,**} \E_{p_2 \in {\P}}^{\log,**} |f_{p_1 p_2}(a_0)| \gg 1.$$
On the other hand, from \eqref{2m1} we have
\begin{equation}\label{m1}
 \E_{p_1 \in {\P}}^{\log,**} \E_{p_2 \in {\P}}^{\log,**} |f_{p_1 p_2}(a_0) - \overline{G(p_1)} \overline{G(p_2)} f_{1,0}(a_0 p_1 p_2)| \ll \eps.
\end{equation}
From the triangle inequality, we have
$$ |f_{p_1 p_2}(a_0)| \ll |f_{1,0}(a_0 p_1 p_2)| + |f_{p_1 p_2}(a_0) - \overline{G(p_1)} \overline{G(p_2)} f_{1,0}(a_0 p_1 p_2)|+\varepsilon,$$
and hence (since $\eps$ is assumed small)
$$  \E_{p_1 \in {\P}}^{\log,**} \E_{p_2 \in {\P}}^{\log,**} |f_{1,0}(a_0 p_1 p_2)| \gg 1.$$
By the periodicity of $f_{1,0}$ and the prime number theorem in arithmetic progressions, we conclude \eqref{f10-large}.

Next, let $a_1,a_2,b$ be as in the proposition.  Applying \eqref{2m1} twice, we see that 
\begin{equation}\label{mas-1}
\E_{p_1 \in {\P}}^{\log,**} \E_{p_2 \in {\P}}^{\log,**} |f_{p_1 p_2}(a_1) - \overline{G(p_1)} \overline{G(p_2)} f_{1,0}(a_1 p_1 p_2)| \ll \eps
\end{equation}
and 
\begin{equation}\label{mas-2}
\E_{p_1 \in {\P}}^{\log,**} \E_{p_2 \in {\P}}^{\log,**}  |f_{p_1 p_2}(a_2) - \overline{G(p_1)} \overline{G(p_2)} f_{1,0}(a_2 p_1 p_2)| \ll \eps.
\end{equation}

We now eliminate the functions $f_{p_1p_2}$ and $G$ from these estimates.  As in the proof of Lemma \ref{chug}, we can use the prime number theorem in arithmetic progressions to rearrange the left-hand side of \eqref{mas-1} as 
$$
 \E_{p_1 \in {\P}}^{\log,**} \E_{c \in (\Z/q\Z)^\times} \E_{d \in \N}^{\log\log,**} \E_{d \leq p_2 < (1+\eps) d; p_2 = c\ (q)} |f_{p_1 p_2}(a_1) - \overline{G(p_1)} \overline{G(p_2)} f_{1,0}(a_1 p_1 p_2)| + O(\eps)$$
and hence after a change of variables $c \mapsto bc$ (and renaming $p_2$ as $p'_2$)
$$
 \E_{p_1 \in {\P}}^{\log,**} \E_{c \in (\Z/q\Z)^\times} \E_{d \in \N}^{\log\log,**} \E_{d \leq p'_2 < (1+\eps) d; p'_2 = bc\ (q)} 
 |f_{p_1 p'_2}(a_1) - \overline{G(p_1)} \overline{G(p'_2)} f_{1,0}(a_1 p_1 p'_2)| \ll \eps.$$
From \eqref{fd1d2}, we have $f_{p_1 p_2}(a_1), f_{p_1 p'_2}(a_1) = f_{p_1 d}(a_1) + O(\eps)$; from the periodicity of $f_{1,0}$ we also have $f_{1,0}(a_1 p_1 p_2) = f_{1,0}(a_1 c p_1)$ and $f_{1,0}(a_1 p_1 p'_2) = f_{1,0}(a_1bc p_1)$.  We conclude that
$$
 \E_{p_1 \in {\P}}^{\log,**} \E_{c \in (\Z/q\Z)^\times} \E_{d \in \N}^{\log\log,**} \E_{d \leq p_2 < (1+\eps) d; p_2 = c\ (q)}  |f_{p_1 d}(a_1) - \overline{G(p_1)} \overline{G(p_2)} f_{1,0}(a_1 cp_1)| \ll \eps$$
and
$$
 \E_{p_1 \in {\P}}^{\log,**} \E_{c \in (\Z/q\Z)^\times} \E_{d \in \N}^{\log\log,**} \E_{d \leq p'_2 < (1+\eps) d; p'_2 = bc\ (q)} |f_{p_1 d}(a_1) - \overline{G(p_1)} \overline{G(p'_2)} f_{1,0}(a_1bcp_1)| \ll \eps$$
and hence by the triangle inequality and \eqref{gp-inv} we have
\begin{align*} 
 &\E_{p_1 \in {\P}}^{\log,**} \E_{c \in (\Z/q\Z)^\times} \E_{d \in \N}^{\log\log,**} \E_{d \leq p_2 < (1+\eps) d; p_2 = c\ (q)}  \\
 &\quad \E_{d \leq p'_2 < (1+\eps) d; p'_2 = bc\ (q)}|\overline{G(p_2)} f_{1,0}(a_1 cp_1) - \overline{G(p'_2)} f_{1,0}(a_1bcp_1)| \ll \eps.
 \end{align*}
We have thus eliminated $f_{p_1 p_2}$ and one factor of $G$; we still seek to eliminate the other factor of $G$.  To do this, we replace $a_1$ by $a_2$ in the above analysis to obtain
\begin{align*}
 &\E_{p_1 \in {\P}}^{\log,**} \E_{c \in (\Z/q\Z)^\times} \E_{d \in \N}^{\log\log,**} \E_{d \leq p_2 < (1+\eps) d; p_2 = c\ (q)}  \\
 &\quad \E_{d \leq p'_2 < (1+\eps) d; p'_2 = bc\ (q)}|\overline{G(p_2)} f_{1,0}(a_2 cp_1) - \overline{G(p'_2)} f_{1,0}(a_2bcp_1)| \ll \eps.
 \end{align*}
 
 At this point, let us note that $|f_{1,0}(a)|\ll 1$ for $a\in \mathbb{Z}$. To see this, we use Corollary \ref{mango} to conclude that
 \begin{align*}
 \limsup_{P_1 \to \infty} \limsup_{P_2 \to \infty} \E_{p_1 \leq P_1; p_1\equiv 1\ (q)}^{\log} \E_{p_2 \leq P_2; p_2\equiv 1\ (q)}^{\log} |f_{p_1 p_2}(a) - \overline{G(p_1)} \overline{G(p_2)} f_{1,0}(a)| =0.
 \end{align*}
 Then from the triangle inequality, \eqref{gp-inv}, and the trivial bound $|f_{p_1p_2}(a)|\ll 1$ we reach the conclusion $|f_{1,0}(a)|\ll 1$.

Next observe the identity
\begin{align*}
&\overline{G(p_2)} (f_{1,0}(a_1 cp_1)f_{1,0}(a_2bcp_1) - f_{1,0}(a_1bcp_1) f_{1,0}(a_2 cp_1))\\
&= f_{1,0}(a_2 bcp_1) (\overline{G(p_2)} f_{1,0}(a_1 cp_1) - \overline{G(p'_2)} f_{1,0}(a_1bcp_1))\\
&- f_{1,0}(a_1 bcp_1) (\overline{G(p_2)} f_{1,0}(a_2 cp_1) - \overline{G(p'_2)} f_{1,0}(a_2bcp_1));
\end{align*} 
we thus have from the triangle inequality, the boundedness of $|f_{1,0}(a)|$, and \eqref{gp-inv} that
\begin{align*} 
&|f_{1,0}(a_1 cp_1)f_{1,0}(a_2bcp_1) - f_{1,0}(a_1bcp_1) f_{1,0}(a_2 cp_1)| \\
&\ll |\overline{G(p_2)} f_{1,0}(a_1 cp_1) - \overline{G(p'_2)} f_{1,0}(a_1bcp_1)| + |\overline{G(p_2)} f_{1,0}(a_2 cp_1) - \overline{G(p'_2)} f_{1,0}(a_2bcp_1)|
\end{align*}
for all but finitely many $p_1,p_2$, and thus by further application of the triangle inequality
\begin{align*}
& \E_{p_1 \in {\P}}^{\log,**} \E_{c \in (\Z/q\Z)^\times} \E_{d \in \N}^{\log\log,**}\E_{d \leq p_2 < (1+\eps) d; p_2 = c\ (q)} \\
&\quad  \E_{d \leq p'_2 < (1+\eps) d; p'_2 = bc\ (q)} |f_{1,0}(a_1 cp_1)f_{1,0}(a_2bcp_1) - f_{1,0}(a_1bcp_1) f_{1,0}(a_2 cp_1)| \ll \eps.
\end{align*}
As the expression being averaged does not depend on $d,p_2,p'_2$, this bound simplifies to
$$
 \E_{p_1 \in {\P}}^{\log,**} \E_{c \in (\Z/q\Z)^\times} |f_{1,0}(a_1 cp_1)f_{1,0}(a_2bcp_1) - f_{1,0}(a_1bcp_1) f_{1,0}(a_2 cp_1)| \ll \eps$$
and by the prime number theorem in arithmetic progressions and the periodicity of $f_{1,0}$, this simplifies further (cf. Lemma \ref{chug}) to
give the desired bound \eqref{fito}.
\end{proof}

Let $a$ be an integer, and let $b$ be coprime to $q$.  Applying \eqref{fito} with $a_1 = a$ and $a_2 = a_0 c'$ for $c'$ coprime to $q$, and averaging, we conclude that
$$ \E_{c' \in (\Z/q\Z)^\times} \E_{c \in (\Z/q\Z)^\times} |f_{1,0}(a c)f_{1,0}(a_0bc c') - f_{1,0}(a bc) f_{1,0}(a_0 c c')| \ll \eps$$
and hence
$$ \E_{c' \in (\Z/q\Z)^\times} \E_{c \in (\Z/q\Z)^\times} \left|f_{1,0}(a c)f_{1,0}(a_0bc c') \overline{f_{1,0}(a_0cc')}- f_{1,0}(a bc) |f_{1,0}(a_0 c c')|^2 \right| \ll \eps.$$
By the triangle inequality, this implies that
$$ \E_{c \in (\Z/q\Z)^\times} \left|f_{1,0}(a c) \E_{c' \in (\Z/q\Z)^\times} f_{1,0}(a_0bc c') \overline{f_{1,0}(a_0cc')} - f_{1,0}(a bc) \E_{c' \in (\Z/q\Z)^\times} |f_{1,0}(a_0 c c')|^2 \right| \ll \eps.$$
Making the change of variables $c'' = cc'$, this is
$$ \E_{c \in (\Z/q\Z)^\times} \left|f_{1,0}(a c) \E_{c'' \in (\Z/q\Z)^\times} f_{1,0}(a_0bc'') \overline{f_{1,0}(a_0c'')} - f_{1,0}(a bc) \E_{c'' \in (\Z/q\Z)^\times} |f_{1,0}(a_0c'')|^2 \right| \ll \eps.$$
If we define the function $\psi: (\Z/q\Z)^\times \to \C$ by
$$ \psi(b) \coloneqq \frac{\E_{c'' \in (\Z/q\Z)^\times} f_{1,0}(a_0bc'') \overline{f_{1,0}(a_0c'')}}{\E_{c'' \in (\Z/q\Z)^\times} |f_{1,0}(a_0c'')|^2}$$
then by \eqref{f10-large} and Cauchy-Schwarz, we have $\psi(b) = O(1)$ for all $b \in (\Z/q\Z)^\times$, and
\begin{equation}\label{ecc}
 \E_{c \in (\Z/q\Z)^\times} |f_{1,0}(a c) \psi(b) - f_{1,0}(a bc)| \ll \eps
\end{equation}
for all $a \in \Z/q\Z$ and $b \in (\Z/q\Z)^\times$.  

By definition, $\psi(1)=1$.  Next, we establish an approximate multiplicativity property of $\psi$, known as the \emph{quasimorphism} property \cite{quasi} in the literature.  If $b_1, b_2 \in (\Z/q\Z)^\times$, then from three applications of \eqref{ecc} one has
\begin{align*}
 \E_{c \in (\Z/q\Z)^\times} |f_{1,0}(a_0 c) \psi(b_1) - f_{1,0}(a_0 b_1c)| &\ll \eps \\
 \E_{c \in (\Z/q\Z)^\times} |f_{1,0}(a_0 b_1 c) \psi(b_2) - f_{1,0}(a_0 b_1 b_2 c)| &\ll \eps \\
 \E_{c \in (\Z/q\Z)^\times} |f_{1,0}(a_0 c) \psi(b_1 b_2) - f_{1,0}(a_0 b_1 b_2 c)| &\ll \eps.
\end{align*}
Applying the triangle inequality (after multiplying the first inequality by $|\psi(b_2)|$, we conclude that
$$ \E_{c \in (\Z/q\Z)^\times} |f_{1,0}(a_0 c) (\psi(b_1 b_2) - \psi(b_1) \psi(b_2))| \ll \eps$$
and hence by \eqref{f10-large} we have the \emph{quasimorphism equation}
$$ \psi(b_1 b_2) = \psi(b_1) \psi(b_2) + O(\eps).$$

We now apply a stability theorem to replace this quasimorphism on $(\Z/q\Z)^\times$ by a homomorphism (i.e., a Dirichlet character).

\begin{lemma}[Stability of Dirichlet characters]\label{dirs}  Let $\varepsilon>0$, and let $\psi: (\Z/q\Z)^\times \to \C$ be a function obeying the bound $\psi(b) = O(1)$ for all $b \in \Z/q\Z$, the identity $\psi(1) = 1$, and the quasimorphism equation $\psi(b_1 b_2) = \psi(b_1) \psi(b_2) + O(\eps)$ for all $b_1,b_2 \in (\Z/q\Z)^\times$.  Then there exists a Dirichlet character $\chi: (\Z/q\Z)^\times \to \mathbb{S}^{1}$ of period $q$ such that $\psi(b) = \chi(b) + O(\eps)$ for all $b \in (\Z/q\Z)^\times$.
\end{lemma}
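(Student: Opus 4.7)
The plan is to apply Fourier analysis on the finite abelian group $G = (\Z/q\Z)^\times$ and then use a convolution trick to upgrade $L^2$ Fourier control to the desired $L^\infty$ bound.

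First I would establish $|\psi(b)| = 1 + O(\eps)$ uniformly in $b$. Specialising the quasimorphism equation to $b_1 = b_2 = b$ gives $|\psi(b)|^2 \leq |\psi(b^2)| + O(\eps)$; taking the supremum over $b$ and using $|\psi(1)| = 1$ shows that $M := \sup_b |\psi(b)|$ satisfies $M^2 \leq M + O(\eps)$ and $M \geq 1$, forcing $M \leq 1 + O(\eps)$. Setting $b_2 = b_1^{-1}$ in the quasimorphism equation gives $\psi(b)\psi(b^{-1}) = 1 + O(\eps)$, whence $|\psi(b)| \geq (1 - O(\eps))/M \geq 1 - O(\eps)$.

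Next I would expand $\psi$ in the Fourier basis of Dirichlet characters of period $q$: $\psi(b) = \sum_\chi c_\chi \chi(b)$ with $c_\chi := \E_{b \in G} \psi(b)\overline{\chi(b)}$. Substituting this expansion into $\psi(b_1 b_2) - \psi(b_1)\psi(b_2)$ shows that its Fourier coefficient at $\chi_1(b_1)\chi_2(b_2)$ equals $c_\chi(1 - c_\chi)$ when $\chi_1 = \chi_2 = \chi$ and $-c_{\chi_1}c_{\chi_2}$ otherwise, so Parseval on $L^2(G \times G)$ applied to the quasimorphism equation yields
\begin{equation*}
\sum_\chi |c_\chi|^2 |c_\chi - 1|^2 \;+\; \sum_{\chi_1 \neq \chi_2} |c_{\chi_1}|^2 |c_{\chi_2}|^2 \ll \eps^2.
\end{equation*}
The first sum implies $|c_\chi|\,|c_\chi - 1| \ll \eps$ for every $\chi$, so every Fourier coefficient is either $O(\eps)$ or within $O(\eps)$ of $1$; the second sum equals $A^2 - \sum_\chi |c_\chi|^4$ where $A := \sum_\chi |c_\chi|^2 = \E_b |\psi(b)|^2 = 1 + O(\eps)$ by step one, and this in combination with the first sum forbids two distinct Fourier coefficients from both being within $O(\eps)$ of $1$ and also rules out the degenerate case of every coefficient being $O(\eps)$. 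Thus there is a unique Dirichlet character $\chi_0$ of period $q$ with $c_{\chi_0} = 1 + O(\eps)$, and applying the bound $|c_\chi - 1| \geq 1/2$ for $\chi \neq \chi_0$ inside the first sum gives the sharpened control $\sum_{\chi \neq \chi_0} |c_\chi|^2 \ll \eps^2$.

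Finally I would exploit the convolution identity
\begin{equation*}
\E_{c \in G} \psi(bc)\overline{\psi(c)} = \sum_\chi |c_\chi|^2 \chi(b),
\end{equation*}
obtained by inserting the Fourier expansion of each factor and using the orthogonality of characters on $G$. Pointwise, the quasimorphism equation together with $|\psi(c)|^2 = 1 + O(\eps)$ gives $\psi(bc)\overline{\psi(c)} = \psi(b)|\psi(c)|^2 + O(\eps) = \psi(b) + O(\eps)$, so the left-hand side above equals $\psi(b) + O(\eps)$. The right-hand side equals $|c_{\chi_0}|^2 \chi_0(b) + \sum_{\chi \neq \chi_0} |c_\chi|^2 \chi(b) = \chi_0(b) + O(\eps) + O(\eps^2)$, where the final error comes from the sup-norm bound $\sum_{\chi \neq \chi_0} |c_\chi|^2 \ll \eps^2$. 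Equating the two expressions yields $\psi(b) = \chi_0(b) + O(\eps)$ for every $b \in G$, as required. The main obstacle is precisely this passage from $L^2$ to $L^\infty$ on the non-principal coefficients: a direct Cauchy--Schwarz bound on the Fourier expansion of $\psi - \chi_0$ only gives $\sum_{\chi \neq \chi_0} |c_\chi| \leq \sqrt{|G|}\cdot O(\eps)$, which deteriorates with $q$; the convolution identity circumvents this by squaring the non-principal coefficients, compressing their aggregate $L^\infty$ contribution to $O(\eps^2)$ with constant independent of $|G|$.
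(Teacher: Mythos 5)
Your proposal is correct, and it takes a genuinely different route from the paper. The paper's argument is a cocycle-straightening (bounded-cohomology) argument: one sets $\psi(b_1 b_2) = \psi(b_1)\psi(b_2)\exp(\rho(b_1,b_2))$ with $\rho = O(\eps)$, verifies the cocycle identity $\rho(b_1,b_2) + \rho(b_1 b_2, b_3) = \rho(b_1, b_2 b_3) + \rho(b_2, b_3)$, averages in $b_3$ to obtain a coboundary $\phi(b) = \E_{b_3}\rho(b,b_3)$, and then defines $\chi(b) := \psi(b)\exp(\phi(b))$, which is now an exact homomorphism within $O(\eps)$ of $\psi$. Your argument instead works in Fourier duality: you show $|\psi| = 1 + O(\eps)$, use Parseval on $G \times G$ to constrain the Fourier coefficients $c_\chi$ to a dichotomy, use the $A^2 - \sum|c_\chi|^4 \ll \eps^2$ constraint to isolate a unique $\chi_0$ with $c_{\chi_0} = 1 + O(\eps)$ and $\sum_{\chi\neq\chi_0}|c_\chi|^2 \ll \eps^2$, and then deploy the convolution identity $\E_c\psi(bc)\overline{\psi(c)} = \sum_\chi |c_\chi|^2\chi(b)$ to pass from $L^2$ to $L^\infty$ with constants independent of $|G|$. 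The steps all check out: the Fourier coefficient computation of $\psi(b_1 b_2) - \psi(b_1)\psi(b_2)$ is right, the dichotomy follows from $|c_\chi||1-c_\chi| \ll \eps$ plus $\max(|c_\chi|,|1-c_\chi|) \geq 1/2$, and the case analysis on $m = \#\{\chi : c_\chi = 1 + O(\eps)\}$ correctly excludes $m = 0$ (since $A^2 \gg 1$ would be forced to be $\ll \eps^2$) and $m \geq 2$ (two coefficients near $1$ contribute $\gg 1$ to the off-diagonal Parseval sum). The two approaches trade different strengths: the paper's cocycle argument is structurally cleaner, manufactures the target character $\chi$ in one multiplicative step, and extends to arbitrary amenable groups (which is the content of Kazhdan's theorem that the paper cites); your Fourier argument is constrained to abelian groups but is more self-contained and quantitatively transparent, and the convolution trick for circumventing the $\sqrt{|G|}$ loss in the $L^2\to L^\infty$ passage is exactly the right idea — a direct Cauchy--Schwarz would indeed fail uniformly in $q$.
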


This lemma is a special case of Kazhdan\footnote{We thank Assaf Naor for this reference.  Ben Green also pointed out to us the closely related fact that the bounded cohomology of amenable groups is trivial; see for instance \cite[Theorem 3.7]{frigerio}.} \cite{kazhdan}, and also follows from \cite[Proposition 5.3]{bgs} (which cites \cite{bfl} for a more general result), but for the convenience of the reader we give a self-contained proof here.

\begin{proof}  We can assume that $\eps$ is smaller than any given positive absolute constant, as the claim is trivial otherwise.  Since $1 = \psi(1) = \psi(b) \psi(b^{-1}) + O(\eps)$ and $\psi(b^{-1}) = O(1)$, we see that $1 \ll |\psi(b)| \ll 1$ for all $b \in (\Z/q\Z)^\times$.  We introduce the cocycle $\rho: (\Z/q\Z)^\times \times (\Z/q\Z)^\times \to \C$ by defining $\rho(b_1,b_2)$ for $b_1,b_2 \in (\Z/q\Z)^\times$ to be the unique complex number of size $O(\eps)$ such that
\begin{equation}\label{psib}
 \psi(b_1 b_2) = \psi(b_1) \psi(b_2) \exp( \rho(b_1, b_2) );
\end{equation}
this is well-defined for $\eps$ small enough.  For $b_1,b_2,b_3 \in (\Z/q\Z)^\times$, we have
$$ \psi(b_1 b_2 b_3) = \psi(b_1 b_2) \psi(b_3) \exp( \rho(b_1b_2, b_3) ) = \psi(b_1) \psi(b_2) \psi(b_3) \exp( \rho(b_1, b_2) + \rho(b_1 b_2, b_3) )$$
and
$$ \psi(b_1 b_2 b_3) = \psi(b_1) \psi(b_2 b_3) \exp( \rho(b_2, b_3) ) = \psi(b_1) \psi(b_2) \psi(b_3) \exp( \rho(b_1, b_2 b_3) + \rho(b_2, b_3) )$$
which on taking logarithms yields (for $\eps$ small enough) the \emph{cocycle equation}
$$ \rho(b_1, b_2) + \rho(b_1 b_2, b_3) = \rho(b_1, b_2 b_3) + \rho(b_2, b_3).$$
Averaging in $b_3$, we conclude the \emph{coboundary equation}
$$ \rho(b_1, b_2) + \phi(b_1 b_2) = \phi(b_1) + \phi(b_2)$$
where $\phi(b) \coloneqq \E_{b_3 \in (\Z/q\Z)^\times} \rho(b,b_3)$.  If we then define the function $\chi: \Z/q\Z \to \C$ by 
$$ \chi(b) \coloneqq \psi(b) \exp( \phi(b) ),$$
then $\psi(b) = \chi(b) + O(\eps)$ for all $b \in (\Z/q\Z)^\times$, and from \eqref{psib} we have
$$ \chi(b_1 b_2) = \chi(b_1) \chi(b_2)$$
for all $b_1,b_2 \in (\Z/q\Z)^\times$, thus $\chi: (\Z/q\Z)^\times \to \C$ is a homomorphism and therefore a Dirichlet character of period $q$.  The claim follows.
\end{proof}

Let $\chi$ be the Dirichlet character of period $q$ provided by the above lemma, then from \eqref{ecc} and the triangle inequality we have the \emph{approximate isotopy equation}
$$
 \E_{c \in (\Z/q\Z)^\times} |f_{1,0}(a c) \chi(b) - f_{1,0}(a bc)| \ll \eps
$$
for all $a \in \Z/q\Z$ and $b \in (\Z/q\Z)^\times$.  We can rearrange this as
$$ \E_{c \in (\Z/q\Z)^\times} |f_{1,0}(a c) - \overline{\chi(b)} f_{1,0}(a bc)| \ll \eps $$
and average in $b$ to conclude that
\begin{equation}\label{tfa}
 \E_{c \in (\Z/q\Z)^\times} |f_{1,0}(a c) - \tilde f(ac)| \ll \eps 
\end{equation}
for all $a$, where $\tilde f: \Z/q\Z \to \C$ is the function
$$ \tilde f(a) \coloneqq \E_{b \in (\Z/q\Z)^\times} \overline{\chi(b)} f_{1,0}(a b).$$
Observe that $\tilde f$ is $\chi$-isotypic in the sense that
$$ \tilde f(ab) = \chi(b) \tilde f(a)$$
whenever $a \in \Z/q\Z$ and $b \in (\Z/q\Z)^\times$.  

From \eqref{tfa} and \eqref{f10-large}, one has
$$ \E_{c \in (\Z/q\Z)^\times} |\tilde f(a_0 c)| \gg 1$$
and hence by the $\chi$-isotypy of $\tilde f$
\begin{equation}\label{tfa0}
|\tilde f(a_0)| \gg 1.
\end{equation}

Now we work to control $f_d$.  Let $a$ be an integer.  From \eqref{tfa} and the prime number theorem in arithmetic progressions, we have
$$
\limsup_{P_1 \to \infty} \limsup_{P_2 \to \infty} \E_{p_1 \leq P_1}^{\log}  \E_{p_2 \leq P_2}^{\log} |f_{1,0}(a p_1 p_2) - \tilde f(a p_1 p_2)| \ll \eps
$$
From this, \eqref{gp-inv}, \eqref{2m1-0}, and the triangle inequality, we conclude that 
$$
\limsup_{P_1 \to \infty} \limsup_{P_2 \to \infty} \E_{p_1 \leq P_1}^{\log} \E_{p_2 \leq P_2}^{\log} |f_{p_1 p_2}(a) - \overline{G(p_1)}\,  \overline{G(p_2)} \tilde f(a p_1 p_2)| \ll \eps.
$$
Using the $\chi$-isotopy of $\tilde f$, we can write this as
\begin{equation}\label{pip-0}
\limsup_{P_1 \to \infty} \limsup_{P_2 \to \infty} \E_{p_1 \leq P_1}^{\log} \E_{p_2 \leq P_2}^{\log} |f_{p_1 p_2}(a) - \overline{G}\chi(p_1) \overline{G}\chi(p_2) \tilde f(a)| \ll \eps.
\end{equation}
This has the following useful consequence.

\begin{lemma}[Isotopy]\label{isotop}  Let the notation be as above. Let $a$ be an integer and let $b$ be an integer coprime to $q$.  Then we have
$$ \limsup_{X\to \infty}\E_{d \leq X}^{\log\log} |f_d(ab) - \chi(b) f_d(a)| \ll \eps.$$
\end{lemma}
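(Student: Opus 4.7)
The plan is to obtain the bound directly from the approximate identity \eqref{pip-0}, combined with the $\chi$-isotypy of $\tilde f$ established just prior to the lemma. Apply \eqref{pip-0} once with the integer parameter equal to $a$ and once with it equal to $ab$; since $b$ is coprime to $q$ and $\tilde f$ is $\chi$-isotypic, $\tilde f(ab) = \chi(b)\tilde f(a)$, so multiplying the first application by $\chi(b)$ and invoking the triangle inequality causes the common factor $\overline{G\chi}(p_1)\overline{G\chi}(p_2)\chi(b)\tilde f(a)$ to cancel. This yields
\begin{equation*}
\limsup_{P_1 \to \infty}\limsup_{P_2\to\infty} \E^{\log}_{p_1 \leq P_1}\E^{\log}_{p_2 \leq P_2} |f_{p_1 p_2}(ab) - \chi(b)\, f_{p_1 p_2}(a)|\ll\eps.
\end{equation*}

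The remaining task is to replace the double log-prime average in $p_1,p_2$ by a $\log\log$ average in integers $d$. Set $F(d)\coloneqq |f_d(ab)-\chi(b)f_d(a)|$; this is bounded by $2$ and log-Lipschitz (with constant $4$) by \eqref{fd1d2} and the triangle inequality, so Lemma \ref{chug} applies. Choose an arbitrary generalised limit functional $\lim^{**}_{X\to\infty}$ (not necessarily the same one fixed earlier). Applying Lemma \ref{chug} with its natural number parameter specialised to a prime $p_1$ gives the exact identity
\begin{equation*}
\E^{\log\log,**}_{d \in \N} F(d) = \E^{\log,**}_{p_2 \in \P} F(p_1 p_2).
\end{equation*}
Since the left-hand side does not depend on $p_1$, further averaging the right-hand side in the generalised log-prime sense over $p_1$ (and using that positive generalised limits are bounded by the corresponding $\limsup$) combines with the first step to give $\E^{\log\log,**}_{d \in \N} F(d)\ll\eps$. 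As $\lim^{**}_{X\to\infty}$ was arbitrary, a Hahn--Banach argument, extending a $\limsup$-achieving sublimit to a generalised limit, upgrades this to the claimed bound $\limsup_{X\to\infty}\E^{\log\log}_{d\leq X}F(d)\ll\eps$.

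I do not anticipate any serious obstacle. The $\chi$-isotypy of $\tilde f$ is precisely what is needed to cancel the leading term in the first step, and Lemma \ref{chug} was formulated exactly to interchange log-prime and $\log\log$-integer averages of bounded log-Lipschitz functions; the rest is bookkeeping. The only mild technical point is that $F$ is log-Lipschitz uniformly in $b$, which is immediate from \eqref{fd1d2} since $|\chi(b)|\leq 1$.
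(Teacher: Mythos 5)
Your proposal is correct and follows essentially the same route as the paper: apply \eqref{pip-0} twice with parameters $a$ and $ab$, cancel the common factor using $\tilde f(ab)=\chi(b)\tilde f(a)$ and $|\chi(b)|=1$, then convert the double log-prime average to a $\log\log$ integer average via Lemma \ref{chug} and average out the $p_1$ variable. The only cosmetic difference is that the paper makes the reduction to an arbitrary generalised limit at the outset, whereas you invoke Hahn--Banach at the end; and the paper cites \eqref{gp-inv} in the triangle-inequality step, though as you correctly observe this is not actually needed since the cancelled factor $\overline{G}\chi(p_1)\overline{G}\chi(p_2)\chi(b)\tilde f(a)$ is identical on both sides.
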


\begin{proof} It suffices to prove the claim with an arbitrary generalised limit $\lim^{*}_{X\to \infty}$ in place of $\limsup_{X\to \infty}$. From \eqref{pip-0} we have
$$
 \E_{p_1 \in {\P}}^{\log,*} \E_{p_2 \in {\P}}^{\log,*} |f_{p_1 p_2}(a) - \overline{G}\chi(p_1) \overline{G}\chi(p_2) \tilde f(a)| \ll \eps$$
and
$$
 \E_{p_1 \in {\P}}^{\log,*} \E_{p_2 \in {\P}}^{\log,*} |f_{p_1 p_2}(ab) - \overline{G}\chi(p_1) \overline{G}\chi(p_2) \tilde f(ab)| \ll \eps.$$
As $\tilde f$ is isotypic, $\tilde f(ab) = \chi(b) \tilde f(a)$.  From the triangle inequality and \eqref{gp-inv}, we conclude that
$$
 \E_{p_1 \in {\P}}^{\log,*} \E_{p_2 \in {\P}}^{\log,*} |f_{p_1 p_2}(ab) - \chi(b) f_{p_1 p_2}(a)| \ll \eps.$$
On the other hand, since $d\mapsto |f_{d}(ab)-\chi(b)f_{d}(a)|$ is bounded log-Lipschitz by \eqref{fd1d2}, by Lemma \ref{chug} for any $p_1$ we have
$$
\E_{p_2 \in {\P}}^{\log,*} |f_{p_1 p_2}(ab) - \chi(b) f_{p_1 p_2}(a)|=\E_{d\in \N}^{\log\log, *}|f_{d}(ab)-\chi(b)f_{d}(a)|,
$$
and now the claim now follows by taking the average $\E_{p_1\in \P}^{\log,*}$ on both sides.
\end{proof}

Now we derive another consequence of \eqref{pip-0}. Let $x>0$ be a positive real, and let $a$ be an integer.  From \eqref{pip-0} we have
$$
 \E_{p'_1 \in {\P}}^{\log, **} \E_{p_2 \in {\P}}^{\log,**} |f_{p'_1 p_2}(a) - \overline{G}\chi(p'_1) \overline{G}\chi(p_2) \tilde f(a)| \ll \eps.
$$
By the prime number theorem, this can also be written as
$$
 \E_{p_1 \in {\P}}^{\log,**} \E_{x p_1 \leq p'_1 \leq (1+\eps) xp_1} \E_{p_2 \in {\P}}^{\log,**} |f_{p'_1 p_2}(a) - \overline{G}\chi(p'_1) \overline{G}\chi(p_2) \tilde f(a)| \ll \eps.
$$
From \eqref{gp-inv} we have
$$1-\varepsilon \leq |\overline{G}\chi(p_1)|, |\overline{G}\chi(p'_1)| , |\overline{G}\chi(p_2)|\leq 1$$
for all but finitely many $p_1,p_1',p_2$, so that
\begin{align*} 
&|f_{p'_1 p_2}(a) - \overline{G}\chi(p'_1) G \overline{\chi}(p_1) f_{p_1 p_2}(a)|\\
&\ll
|f_{p_1 p_2}(a) - \overline{G}\chi(p_1) \overline{G}\chi(p_2) \tilde f(a)| + |f_{p'_1 p_2}(a) - \overline{G}\chi(p'_1) \overline{G}\chi(p_2) \tilde f(a)|+O(\varepsilon).
\end{align*}
Thus by the triangle inequality we have 
$$
 \E_{p_1 \in {\P}}^{\log,**} \E_{x p_1 \leq p'_1 \leq (1+\eps) xp_1} \E_{p_2 \in {\P}}^{\log,**}  |f_{p'_1 p_2}(a) - \overline{G}\chi(p'_1) G \overline{\chi}(p_1) f_{p_1 p_2}(a)| \ll \eps.
$$
From \eqref{fd1d2} we have $f_{p'_1 p_2}(a) = f_{x p_1 p_2}(a) + O(\eps)$ (recall that $f_d$ is defined for any real $d>0$), thus
$$
 \E_{p_1 \in {\P}}^{\log,**} \E_{x p_1 \leq p'_1 \leq (1+\eps) xp_1} \E_{p_2 \in {\P}}^{\log,**} |f_{x p_1 p_2}(a) - \overline{G}\chi(p'_1) G \overline{\chi}(p_1) f_{p_1 p_2}(a)| \ll \eps
$$
and thus by the triangle inequality
$$ \E_{p_1 \in {\P}}^{\log,**} \E_{p_2 \in {\P}}^{\log,**} |f_{x p_1 p_2}(a) - \alpha_{p_1}(x) f_{p_1 p_2}(a)| \ll \eps,
$$
where
$$ \alpha_{p_1}(x) \coloneqq \E_{x p_1 \leq p'_1 < (1+\eps) x p_1} \overline{G}\chi(p'_1) G \overline{\chi}(p_1).$$
By Lemma \ref{chug}, this implies that
\begin{equation}\label{edl-0}
 \E_{p_1 \in {\P}}^{\log,**} \E_{d \in \N}^{\log\log,**} |f_{x d}(a) - \alpha_{p_1}(x) f_{d}(a)| \ll \eps
\end{equation}
which by the triangle inequality implies that
\begin{equation}\label{edl}
 \E_{d \in \N}^{\log\log,**} |f_{x d}(a) - \alpha(x) f_{d}(a)| \ll \eps
\end{equation}
where
$$ \alpha(x) \coloneqq \E_{p_1 \in {\P}}^{\log,**} \alpha_{p_1}(x).$$

By construction, we have $\alpha(x) = O(1)$ for all $x$. Setting $a=a_0$ in \eqref{edl} and using \eqref{fd1d2} to write $f_{xd}(a)=f_{d}(a)+O(\varepsilon)$ for $|x-1|\leq \varepsilon$, we deduce from \eqref{fda} that $\alpha(x)=1+O(\varepsilon)$ for $|x-1|\leq \varepsilon$.

Next, for $x,y > 0$, we have the estimates
\begin{align*}
\E_{d \in \mathbb{N}}^{\log\log, **} |f_{x d}(a_0) - \alpha(x) f_{d}(a_0)| &\ll \eps\\
\E_{d \in \mathbb{N}}^{\log\log, **} |f_{x y d}(a_0) - \alpha(y) f_{xd}(a_0)| &\ll \eps\\
\E_{d \in \mathbb{N}}^{\log\log, **} |f_{x y d}(a_0) - \alpha(xy) f_{d}(a_0)| &\ll \eps,
\end{align*}
which by the triangle inequality and \eqref{fda} implies the quasimorphism equation
$$ \alpha(xy) = \alpha(x) \alpha(y) + O(\eps).$$
We now require the Archimedean analogue of Lemma \ref{dirs} (which is also a special case of the results of \cite{kazhdan}).

\begin{lemma}[Stability of Archimedean characters]\label{dira}  Let $\alpha: (0,+\infty) \to \C$ be any function obeying the bound $\alpha(x) = O(1)$ for all $x > 0$, such that $\alpha(x) = 1 + O(\eps)$ when $|x-1| \leq \eps$, and $\alpha(xy) = \alpha(x) \alpha(y) + O(\eps)$ for all $x,y>0$.  Then there exists a real number $t$ such that $\alpha(x) = x^{-it} + O(\eps)$ for all $x>0$.
\end{lemma}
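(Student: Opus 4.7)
The plan is to mirror the cocycle/coboundary argument used to prove Lemma \ref{dirs}, with the finite group $(\Z/q\Z)^\times$ replaced by the (still amenable) non-compact multiplicative group $((0,+\infty),\cdot)$. I would first establish the magnitude bound $|\alpha(x)| = 1 + O(\eps)$ uniformly in $x>0$: iterating the quasimorphism equation gives $\alpha(x^n) = \alpha(x)^n + O(n\eps)$, and combined with $|\alpha(x^n)| = O(1)$ this forces $|\alpha(x)| \leq 1$ upon taking $n$-th roots as $n\to\infty$; the identity $\alpha(x)\alpha(1/x) = \alpha(1)+O(\eps) = 1+O(\eps)$ then supplies the matching lower bound. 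Since $\alpha(x)\alpha(y)$ is bounded away from $0$, I can define a cocycle $\rho(x,y)\in\C$ of size $O(\eps)$ by $\alpha(xy) = \alpha(x)\alpha(y)\exp(\rho(x,y))$, and associativity of multiplication forces the cocycle equation
\[ \rho(x,y) + \rho(xy,z) = \rho(x,yz) + \rho(y,z), \]
exactly as in Lemma \ref{dirs}.

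The key structural difference appears in the trivialization of $\rho$: in place of averaging over a finite group, I would invoke a multiplicative-translation-invariant mean $\Lambda$ on $\ell^\infty((0,+\infty))$, whose existence follows from amenability of $(0,+\infty)$ (equivalently, pulling back via $\log$, a translation-invariant Banach limit on $\ell^\infty(\R)$ furnished by Hahn--Banach). Setting $\phi(x) \coloneqq \Lambda_z \rho(x,z)$ gives $|\phi(x)| = O(\eps)$, and averaging the cocycle equation in $z$ using the invariance of $\Lambda$ yields the coboundary relation $\rho(x_1,x_2) = \phi(x_1) + \phi(x_2) - \phi(x_1 x_2)$. Then $\tilde\alpha(x) \coloneqq \alpha(x)\exp(\phi(x))$ is $O(\eps)$-close to $\alpha$ and is \emph{exactly} multiplicative: $\tilde\alpha(xy) = \tilde\alpha(x)\tilde\alpha(y)$.

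Next, the identity $\tilde\alpha(x^n) = \tilde\alpha(x)^n$ combined with the boundedness of $\tilde\alpha$ forces $|\tilde\alpha(x)| = 1$ identically. From the local hypothesis on $\alpha$ and $\phi = O(\eps)$, I also inherit $\tilde\alpha(x) = 1 + O(\eps)$ for $|x-1| \leq \eps$. To conclude, I would upgrade this $\eps$-local near-continuity to genuine continuity at $1$, at which point classification of continuous unimodular characters of $\R^+$ delivers $\tilde\alpha(x) = x^{-it}$ for some $t\in\R$ (real since $|\tilde\alpha|=1$), and hence $\alpha(x) = x^{-it} + O(\eps)$. I would carry out the upgrade by a quadratic self-improvement: if $|\tilde\alpha(y) - 1| \leq C\delta$ for $|y-1| \leq \delta$, then applying this to $y = x^n$ with $|x-1| \leq \delta/n$, the identity $\tilde\alpha(x)^n = \tilde\alpha(x^n) = 1 + O(\delta)$ places $\tilde\alpha(x)$ within $O(\delta/n)$ of some $n$-th root of unity, while the inductive bound $|\tilde\alpha(x)-1| \leq C\delta$ pins that root to be $1$ provided $n\delta$ stays below a universal constant. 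This improves the scale $\delta \to \delta/n$ with $n \sim 1/\delta$, so iterating produces bounds at doubly-exponentially smaller scales and hence genuine continuity at $1$, which spreads to every point of $\R^+$ by exact multiplicativity.

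I expect the subtlest step to be this final continuity bootstrap, since the $\phi$ produced by the invariant mean need not be measurable in general, so I cannot fall back on standard Lebesgue-integration type regularity arguments; the quadratic improvement has to be tracked carefully, and the constraint $n\delta \ll 1$ limits the improvement available at each stage. The coboundary step is conceptually routine once amenability is invoked, but it still requires verifying that the invariance of $\Lambda$ under multiplicative translations meshes properly with the cocycle equation — a somewhat more delicate bookkeeping than the averaging over the finite group $(\Z/q\Z)^\times$ in the proof of Lemma \ref{dirs}.
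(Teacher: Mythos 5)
Your proposal follows the same strategic arc as the paper's proof — define a cocycle $\rho$ via $\alpha(x_1x_2) = \alpha(x_1)\alpha(x_2)\exp(\rho(x_1,x_2))$, trivialise it to a coboundary by averaging, pass to the exactly multiplicative $\tilde\alpha$, bootstrap $\eps$-closeness near $1$ to genuine continuity, and classify — but the details diverge at two points, one substantive and one cosmetic.

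The substantive divergence is in the coboundary step. You invoke an abstract multiplicative-translation-invariant mean on $\ell^\infty((0,+\infty))$, supplied by amenability of $\R$; this has the advantage of not caring whether $\rho$ is measurable. The paper instead replaces $\alpha$ by a discretised version $\alpha_1$ (defined by rounding the argument to a lattice of mesh $\eps^2$) which changes $\alpha$ by only $O(\eps)$ but makes it Lebesgue measurable; it then sets $\phi(x) = \plim_{M\to\infty}\frac{1}{\log M}\int_1^M\rho(x,x_3)\frac{dx_3}{x_3}$, a generalised limit of concrete logarithmic averages. Both routes produce a bounded $\phi$ satisfying the coboundary equation; the paper's is more self-contained (generalised limits are already a fixture of the paper, and the discretisation trick is elementary), while yours is arguably more conceptually transparent about why it works. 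For the continuity bootstrap, the paper uses a single clean observation: if $|x-1|\leq\eps/n$ then $\tilde\alpha(x)^n,\ \tilde\alpha(x) = 1+O(\eps)$ together force $\tilde\alpha(x) = 1+O(\eps/n)$, giving Lipschitz control $\tilde\alpha(x) = 1+O(|x-1|)$ at $1$ directly, with no iteration; your quadratic self-improvement (scale $\delta\to\delta^2$ and so on) reaches the same conclusion but is more elaborate than necessary, and the paper does not in fact rely on measurability for this step either.

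The cosmetic issue: your opening claim that $\alpha(x^n) = \alpha(x)^n + O(n\eps)$ is circular — unrolling the quasimorphism relation gives an error that accumulates like $C\eps(1+|\alpha(x)|+\cdots+|\alpha(x)|^{n-1})$, which is only $O(n\eps)$ once you already know $|\alpha(x)|\leq 1+O(\eps)$. A clean fix is the supremum argument: with $M := \sup_x|\alpha(x)| = O(1)$, the relation $|\alpha(x)|^2 \leq |\alpha(x^2)| + O(\eps) \leq M + O(\eps)$ gives $M^2 \leq M + O(\eps)$, hence $M \leq 1+O(\eps)$; pairing with $\alpha(x)\alpha(1/x) = 1+O(\eps)$ gives the matching lower bound. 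In any case the paper does not need the sharp bound $|\alpha(x)| = 1+O(\eps)$; it only uses $1\ll|\alpha(x)|\ll1$, which follows immediately from $\alpha(x)\alpha(1/x) = 1+O(\eps)$ and $\alpha(x)=O(1)$, both of which you also have. So the proposal is correct in substance, just with one avoidable detour and one fixable slip.
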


\begin{proof} As before, we can assume $\eps$ is smaller than any given positive constant, as the claim is trivial otherwise.  Since $\alpha(1) = 1+O(\eps)$ and $\alpha(1) = \alpha(x) \alpha(1/x) + O(\eps)$, we have the bounds $1 \ll |\alpha(x)| \ll 1$ for all $x$.  By construction, we also have $\alpha(xy) = \alpha(x) + O(\eps)$ whenever $1 \leq y \leq 1+\eps$.  By replacing $\alpha$ with the discretised version 
\begin{align*}
\alpha_1(x):=\begin{cases}\alpha(\varepsilon^2\lfloor \frac{x}{\varepsilon^2}\rfloor),\quad x\geq \varepsilon,\\ \alpha(\frac{1}{n}),\quad x\in (\frac{1}{n+1},\frac{1}{n}],\,\, 0<x<\varepsilon,\end{cases}
\end{align*}
we may assume that $\alpha$ is Lebesgue measurable. The function $\alpha_1$ continues to enjoy the same properties as $\alpha$, since $\alpha_1(x)=\alpha(x)+O(\varepsilon)$ for all $x>0$. To simplify notation, we denote $\alpha_1$ by $\alpha$ in what follows.

We introduce the cocycle $\rho: (0,+\infty) \times (0,+\infty) \to \C$ by defining $\rho(x_1,x_2)$ for $x_1,x_2 > 0$ to be the unique complex number of size $O(\eps)$ such that
\begin{align}\label{eq3}
 \alpha(x_1 x_2) = \alpha(x_1) \alpha(x_2) \exp( \rho(x_1, x_2) );
\end{align}
this is well-defined and measurable for $\eps$ small enough.  Arguing exactly as in the proof of Lemma \ref{dirs}, we obtain the cocycle equation
$$ \rho(x_1, x_2) + \rho(x_1 x_2, x_3) = \rho(x_1, x_2 x_3) + \rho(x_2, x_3).$$
Taking an asymptotic logarithmic average in $x_3$, we conclude the coboundary equation
\begin{equation}\label{eq21}
\rho(x_1, x_2) + \phi(x_1 x_2) = \phi(x_1) + \phi(x_2)
\end{equation}
where
$$ \phi(x) \coloneqq \plim_{M \to \infty} \frac{1}{\log M} \int_{1}^{M} \rho(x, x_3) \frac{dx_3}{x_3}.$$
If we then define the function $\tilde \alpha: (0,+\infty) \to \C$ by
$$ \tilde \alpha(x) \coloneqq \alpha(x) \exp( \phi(x) )$$
then $\tilde \alpha(x) = \alpha(x) + O(\eps)$ for all $x>0$, and from \eqref{eq3} and \eqref{eq21} we have
$$ \tilde \alpha(xy) = \tilde \alpha(x) \tilde \alpha(y)$$
for all $x,y > 0$, thus $\tilde \alpha: (0,+\infty) \to \C$ is a homomorphism.  Also, by construction one has $\tilde \alpha(x) = O(1)$ for all $x$, so $\tilde \alpha$ in fact takes values in the unit circle $\mathbb{S}^{1}$.  We have $\tilde \alpha(x) = 1+O(\eps)$ when $|x-1| \leq \eps$, and we will use this additional information to show that $\tilde \alpha(x)=x^{it}$ for some real $t$ and all $x>0$. 

If $|x-1| \leq \eps/n$ for some natural number $n$, then $\tilde \alpha(x)^n, \tilde \alpha(x) = 1 + O(\eps)$, which implies that $\tilde \alpha(x) = 1 + O(\eps/n)$.  This implies that $\tilde \alpha(x) = 1 + O(|x-1|)$, and so $\tilde \alpha$ is continuous at $1$ and hence continuous on all of $(0,+\infty)$.  Next, if $x_0 \coloneqq 1 + \eps$ then we have $\tilde \alpha(x_0) = x_0^{it}$ for some $t=O(1)$; taking roots we conclude that $\tilde \alpha(x_0^{1/n}) = (x_0^{1/n})^{it}$ for all natural numbers $n$, and hence $\tilde \alpha(x_0^{m/n}) = (x_0^{m/n})^{it}$ for all natural numbers $n$ and integers $m$.  By continuity we conclude that $\tilde \alpha(x) = x^{it}$ for all $x \in (0,+\infty)$, as required.
\end{proof}

From the above lemma, we conclude that there is a real number $t$ with the property that for every integer $a$ and real $x>0$, one has
\begin{equation} \label{eq25} \E_{d \in \N}^{\log\log,**} |f_{x d}(a) - x^{-it} f_{d}(a)| \ll \eps.\end{equation}
In particular, for every prime $p_1$, one has
\begin{equation*} \E_{d \in \N}^{\log\log,**} |f_{p_1 d}(a_0) - p_1^{-it} f_{d}(a_0)| \ll \eps,\end{equation*}
and thus
\begin{equation}\label{eq22} \E_{p_1 \in {\P}}^{\log,**} \E_{d \in \N}^{\log\log,**} |f_{p_1 d}(a_0) - p_1^{-it} f_{d}(a_0)| \ll \eps\end{equation}

On the other hand, from Proposition \ref{dollop} one has that if $P_1$ is sufficiently large depending on $a_0,\eps$, then
$$ \sup_{d>0} \E^{\log}_{p_1 \leq P_1} |f_{p_1 d}(a_0) G(p_1) - f_d(a_0p_1)| \ll \eps.$$
Hence on averaging in $d$ and taking limits in the $d$ average and then in the $p_1$ average, we conclude that
\begin{equation}\label{eq23} \limsup_{P_1 \to \infty} \E^{\log}_{p_1 \leq P_1} \E_{d \in \N}^{\log\log,**} |f_{p_1 d}(a_0) G(p_1) - f_d(a_0p_1)| \ll \eps.\end{equation}
Meanwhile, from Lemma \ref{isotop} we have
$$ \E_{d \in \N}^{\log\log,**} |f_d(a_0p_1) - \chi(p_1) f_d(a_0)| \ll \eps$$
for all sufficiently large $p_1$, and thus
\begin{equation} \label{eq24} \limsup_{P_1 \to \infty} \E^{\log}_{p_1 \leq P_1} \E_{d \in \N}^{\log\log,**} |f_d(a_0p_1) - \chi(p_1) f_d(a_0)| \ll \eps.\end{equation}
Applying the triangle inequality to \eqref{eq22}, \eqref{eq23}, \eqref{eq24}, we obtain
$$ \limsup_{P_1 \to \infty} \E^{\log}_{p_1 \leq P_1} \E_{d \in \N}^{\log\log,**} |G(p_1) - \chi(p_1) p_1^{it}| |f_d(a_0)| \ll \eps $$
and hence by \eqref{fda} we have
$$ \limsup_{P_1 \to \infty} \E^{\log}_{p_1 \leq P_1} |G(p_1) - \chi(p_1) p_1^{it}| \ll \eps.$$

To summarise the above analysis, we have shown that for every $\eps>0$ there exists a Dirichlet character $\chi = \chi_\eps$ and a real number $t = t_\eps$ such that
$$ \limsup_{P_1 \to \infty} \E^{\log}_{p_1 \leq P_1} |G(p_1) - \chi_\eps(p_1) p_1^{it_\eps}| \ll \eps.$$
\emph{A priori}, the character $\chi_\eps$ and the real number $t_\eps$ depend on $\eps$.  But if $\eps,\eps'>0$ are sufficiently small, we have from the triangle inequality that
$$ \limsup_{P_1 \to \infty} \E^{\log}_{p_1 \leq P_1} |\chi_{\eps'}(p_1) p_1^{it_{\eps'}} - \chi_\eps(p_1) p_1^{it_\eps}| \ll \eps + \eps'.$$
But from the prime number theorem in arithmetic progressions and partial summation, we see that the left-hand side is $\gg 1$ unless $t_\eps = t_{\eps'}$ and the Dirichlet characters are \emph{co-trained} in the sense that they are both induced from the same primitive character $\chi$.  We conclude that there exists a primitive character $\chi$ independent of $\eps$, and a real number $t_0$ independent of $\eps$, such that $t_\eps = t_0$ and $\chi_\eps$ is induced from $\chi$ for $\eps$ sufficiently small.  In particular, as $\chi_{\varepsilon}(p_1)$ and $\chi(p_1)$ agree for all but $O_{\varepsilon}(1)$ primes $p_1$, we have for each $\eps>0$ that
$$
\limsup_{P_1 \to \infty} \E^{\log}_{p_1 \leq P_1} |G(p_1) - \chi(p_1) p_1^{it_0}| \ll \eps
$$
and thus
\begin{equation}\label{lop}
\E^{\log}_{p_1 \in {\P}} |G(p_1) - \chi(p_1) p_1^{it_0}| = 0.
\end{equation}
 Thus $G$ weakly pretends to be the twisted Dirichlet character $n \mapsto n^{it_0} \chi(n)$. This (vacuously) establishes part (i) of Theorem \ref{main}.  

Now let $\eps>0$ be small, and let $a$ be an integer.  From \eqref{pip-0} (and the fact that $\chi_\eps$ is induced from $\chi$), and making the dependence of $\tilde f_\eps$ on $\eps$ explicit, we have
$$
 \E_{p_1 \in {\P}}^{\log,**} \E_{p_2 \in {\P}}^{\log,**} |f_{p_1 p_2}(a) - \overline{G}\chi(p_1) \overline{G}\chi(p_2) \tilde f_\eps(a)| \ll \eps
$$
and hence by \eqref{lop} and the triangle inequality
$$
 \E_{p_1 \in {\P}}^{\log,**} \E_{p_2 \in {\P}}^{\log,**} |f_{p_1 p_2}(a) - p_1^{-it_0} p_2^{-it_0} \tilde f_\eps(a)| \ll \eps
$$
or equivalently
$$
 \E_{p_1 \in {\P}}^{\log,**} \E_{p_2 \in {\P}}^{\log,**} |(p_1p_2)^{it_0} f_{p_1 p_2}(a) - \tilde f_\eps(a)| \ll \eps.
$$
Applying \eqref{fd1d2}, Lemma \ref{chug} and \eqref{eq25} (where we can in fact take $\varepsilon\to 0$, since the deduction succeeding this formula shows that $t=t_0$ is independent of $\varepsilon$), we have
$$\E_{p_2 \in {\P}}^{\log,**} |(p_1p_2)^{it_0} f_{p_1 p_2}(a) - \tilde f_\eps(a)| = 
\E_{d \in \N}^{\log\log,**} |(p_1d)^{it_0} f_{p_1 d}(a) - \tilde f_\eps(a)|  = \E_{d \in \N}^{\log\log,**} |d^{it_0} f_d(a) - \tilde f_\eps(a)| 
$$
for any $p_1$, and hence
$$
 \E_{d \in \N}^{\log\log,**} |d^{it_0} f_{d}(a) - \tilde f_\eps(a)| \ll \eps.
$$
We thus see from the triangle inequality that
$$ |\tilde f_\eps(a) - \tilde f_{\eps'}(a)| \ll \eps + \eps'$$
and so $\tilde f_\eps$ converges uniformly to a limit $f$ with
\begin{equation}\label{ttfa}
 |\tilde f_\eps(a) - f(a)| \ll \eps
\end{equation}
and thus by the triangle inequality, we have
$$
 \E_{d \in \N}^{\log\log,**} |d^{it_0} f_{d}(a) - f(a)| \ll \eps
$$
whenever $\eps>0$, which gives
\begin{equation}\label{dlog}
 \E_{d \in \N}^{\log\log,**} |d^{it_0} f_{d}(a) - f(a)| = 0.
\end{equation}
From \eqref{fda} we see in particular that $f(a_0) \neq 0$.  By construction, each $\tilde f_\eps$ is $\chi$-isotypic in the sense that $\tilde f_\eps(ab) = \chi(b) \tilde f_\eps(a)$ whenever $a,b$ are integers with $b$ coprime to the periods of both $\chi$ and $\tilde f_\eps$. Hence, what remains to be shown is that \eqref{dlog} holds also when taking the average with respect to the ordinary limit.

Now let $\eps>0$ be arbitrary.  Inserting \eqref{lop} into \eqref{pip-0}, we see that
$$
\limsup_{P_1 \to \infty} \limsup_{P_2 \to \infty} \E_{p_1 \leq P_1}^{\log} \E_{p_2 \leq P_2}^{\log} |f_{p_1 p_2}(a) - (p_1 p_2)^{-it_0} \tilde f_\eps(a)| \ll \eps
$$
and hence by \eqref{ttfa} and sending $\eps \to 0$ we get
\begin{equation*}
\limsup_{P_1 \to \infty} \limsup_{P_2 \to \infty} \E_{p_1 \leq P_1}^{\log} \E_{p_2 \leq P_2}^{\log} |f_{p_1 p_2}(a) - (p_1 p_2)^{-it_0} f(a)| = 0.
\end{equation*}
For any $\varepsilon>0$ and any $P_1$ large enough in terms of $\varepsilon$, we apply Lemma \ref{chug}, Proposition \ref{dollop}, formula \eqref{lop} and Lemma \ref{isotop} to write
\begin{align*}
 &\limsup_{P_2 \to \infty} \E_{p_1 \leq P_1}^{\log} \E_{p_2 \leq P_2}^{\log} |f_{p_1 p_2}(a) - (p_1 p_2)^{-it_0} f(a)|\\ 
=& \limsup_{P_2 \to \infty} \E_{p_1 \leq P_1}^{\log} \E_{d \leq P_2}^{\log\log} |f_{p_1 d}(a) - (p_1 d)^{-it_0} f(a)|  \\
=& \limsup_{P_2 \to \infty} \E_{p_1 \leq P_1}^{\log} \E_{d \leq P_2}^{\log\log} |\overline{G(p_1)}f_d(ap_1) - (p_1 d)^{-it_0} f(a)|+O(\varepsilon)\\
=&\limsup_{P_2 \to \infty} \E_{p_1 \leq P_1}^{\log} \E_{d \leq P_2}^{\log\log} |p_1^{-it_0}\overline{\chi}(p_1)f_d(ap_1) - (p_1 d)^{-it_0} f(a)|+O(\varepsilon)\\
=& \limsup_{P_2 \to \infty} \E_{d \leq P_2}^{\log\log} |f_{d}(a) - d^{-it_0} f(a)|+O(\varepsilon),  
\end{align*}
and hence, sending $\varepsilon\to 0$, we obtain
$$ \E_{d \in \N}^{\log\log} |f_d(a) - d^{-it_0} f(a)| = 0.$$
This establishes part (ii) of Theorem \ref{main} (recalling as before that as $G$ weakly pretends to be a twisted Dirichlet character $n \mapsto \chi(n) n^{it}$, it can only weakly pretend to be another twisted Dirichlet character $n \mapsto \chi'(n) n^{it'}$ if $t=t'$ and $\chi, \chi'$ are co-trained).

\section{Proofs of corollaries}\label{corollaries-sec}

In this section we use Theorem \ref{main} to prove Corollaries \ref{elliott-1}, \ref{elliott-2}, \ref{chow}.  We begin with Corollary \ref{elliott-1}.  

\begin{proof}[Proof of Corollary \ref{elliott-1}] Suppose the claim failed, then we can find $k, g_1,\dots,g_k$ as in that corollary, as well as $h_1,\dots,h_k \in \Z$ and $\eps>0$, such that the set
$$ {\mathcal X} \coloneqq \{ X \in \N: |\E_{n \leq X} g_1(n+h_1) \cdots g_k(n+h_k)| > \eps \}$$
does not have logarithmic Banach density zero.  In particular, one can find sequences $X_i \geq \omega_i \to \infty$ and $0 < \delta < 1/2$ such that
\begin{align}\label{eq4}
\E^{\log}_{X_i/\omega_i \leq X \leq X_i} 1_{{\mathcal X}}(x) \geq \delta
\end{align} 
for all $i$. 

Intuitively, if the exceptional set $\mathcal{X}$ was big in the sense of \eqref{eq4}, there would have to be a lot of ``points of density'' of $\mathcal{X}$ (in a sense to be specified later). To make this rigorous, we introduce for each $i$ the function $a_i: \R \to [0,1]$ given by
$$ a_i(s) \coloneqq \sum_{X_i/\omega_i \leq X \leq X_i: X \not \in {\mathcal X}} 1_{\log(X-1) < s \leq \log X}.$$
Note that $a_i(s)$ is the indicator function of the event that there exists an integer $X\not \in \mathcal{X}$ with $X\in [e^{s},e^{s}+1)$ and $X_i/\omega_i\leq X\leq X_i$.

The function $a_i$ is a piecewise constant function supported on an interval of length $(1 + o_{i \to \infty}(1)) \log \omega_i$ and has integral
\begin{align*}
\int_\R a_i(s)\ ds &= \sum_{X_i/\omega_i \leq X \leq X_i: X \not \in {\mathcal X}} \log \frac{X}{X-1} \\
&= \left(\sum_{X_i/\omega_i \leq X \leq X_i: X \not \in {\mathcal X}} \frac{1}{X}\right) + O(1) \\
&= \log \omega_i + O(1) - \sum_{X_i/\omega_i \leq X \leq X_i: X \in {\mathcal X}} \frac{1}{X} \\
&\leq (1-\delta + o_{i \to \infty}(1)) \log \omega_i.
\end{align*}
We introduce the one-sided Hardy--Littlewood maximal function
$$ Ma_i(s) \coloneqq \sup_{r>0} \frac{1}{r} \int_{s-r}^s a_i(s')\ ds'.$$
It is a well-known consequence of the rising sun lemma \cite{riesz} that one has the Hardy--Littlewood maximal inequality 
$$ m( \{ s \in \R: Ma_i(s) \geq \lambda \} ) \leq \frac{1}{\lambda} \int_\R a_i(s)\ ds$$
for any $\lambda>0$, where $m$ denotes Lebesgue measure.  Applying this with $\lambda \coloneqq (1-\delta)^{1/2}$, we conclude that
$$ m(\{ s \in \R: Ma_i(s) \geq (1-\delta)^{1/2} \} ) \leq ((1-\delta)^{1/2} + o_{i \to \infty}(1)) \log \omega_i.$$
In particular, one can find a real number $s_i$ with
\begin{equation}\label{txi}
 \log X_i - ((1-\delta)^{1/2} + o_{i \to \infty}(1)) \log \omega_i \leq s_i \leq \log X_i
\end{equation}
such that
$$ Ma_i(s_i) < (1-\delta)^{1/2} $$
which implies that
\begin{equation}\label{ar}
 \int_{s_i-r}^{s_i} a_i(t)\ dt \leq (1-\delta)^{1/2} r
\end{equation}
for all $r>0$.  Informally, the estimate \eqref{ar} asserts that the natural number $\lfloor \exp(s_i) \rfloor$ is a ``multiplicative point of density'' for the exceptional set ${\mathcal X}$.

By passing to subsequences, and using a diagonalisation argument, we may assume that the limits
\begin{align}\label{eq5}
 f_d(a) \coloneqq \lim_{i \to \infty}  \E_{n \leq \lfloor \exp(s_i)\rfloor/d} g_1(n+ah_1) \cdots g_k(n+ah_k),    
\end{align}
exist for every natural number $d$ and integer $a$.  In particular, the limit of the right-hand side of \eqref{eq5} is the same along any generalised limit $\lim^{**}$. If we now apply Theorem \ref{main}(i) to a generalised limit of the form
$$ \lim^*_{X \to \infty} f(X) \coloneqq \lim^{**}_{i \to \infty} f( \lfloor \exp(s_i) \rfloor ),$$
where $\lim^{**}$ is any generalised limit, we conclude that
$$ \E^{\log\log}_{d \in \N} |f_d(1)| = 0.$$
Thus, if we let $\mu>0$ denote a small constant (depending on $\delta,\eps$) to be chosen later, and $D$ is sufficiently large depending on $\mu$, we have
$$ \E^{\log\log}_{d \leq D} |f_d(1)| \leq \mu.$$
Thus by the triangle inequality
$$ \limsup_{i \to \infty} \E^{\log\log}_{d \leq D} |\E_{n \leq \lfloor \exp(s_i)\rfloor/d} g_1(n+h_1) \cdots g_k(n+h_k)| \leq \mu, $$
and hence for all sufficiently large $i$ (depending on $\delta,\eps,\mu,D$) we find
$$ \E_{d\leq D}^{\log \log} |\E_{n \leq \lfloor \exp(s_i)\rfloor/d} g_1(n+h_1) \cdots g_k(n+h_k)| \leq 2\mu. $$
This implies
$$ \sum_{\log D \leq d \leq D} \frac{1}{d \log d} |\E_{n \leq \lfloor \exp(s_i)\rfloor/d} g_1(n+h_1) \cdots g_k(n+h_k)| \ll \mu \log\log D, $$
say.  In particular, by Markov's inequality one has
\begin{equation}\label{df}
|\E_{n \leq \lfloor \exp(s_i)\rfloor/d} g_1(n+h_1) \cdots g_k(n+h_k)| \leq \frac{\eps}{2} 
\end{equation}
for all $\log D \leq d \leq D$ outside of an exceptional set ${\mathcal D}_i$ with
\begin{equation}\label{di}
 \sum_{d\in {\mathcal D}_i} \frac{1}{d \log d} \ll \frac{\mu}{\eps} \log\log D.
\end{equation}
If $\log D \leq d \leq D$ lies outside of ${\mathcal D}_i$, then one has
$$  |\E_{n \leq X} g_1(n+h_1) \cdots g_k(n+h_k)| < \eps$$
for all $X$ between $\frac{\exp(s_i)}{d+1}-1$ and $\frac{\exp(s_i)}{d}+1$.  In particular, all such $X$ lie outside of ${\mathcal X}$.  Using \eqref{txi} (which places $\exp(s_i)/d$ below $X_i$ and well above $X_i/\omega_i$), we conclude that
$$ a_i(t) = 1$$
on the interval $[s_i - \log(d+1), s_i - \log(d)]$.  In particular,
$$ \int_d^{d+1} a_i( s_i - \log u)\ du = 1.$$
For $d \in {\mathcal D}_i$ we use the trivial bound
$$ \int_d^{d+1} a_i( s_i - \log u)\ du \geq 0.$$
From \eqref{di} we conclude that
\begin{equation}\label{eq27} 
\sum_{\log D \leq d \leq D} \frac{1}{d \log d} \int_d^{d+1} a_i( s_i - \log u)\ du  \geq (1 - O(\frac{\mu}{\eps})) \log\log D. \end{equation}
The left-hand side, up to errors that can be absorbed into the $O( \frac{\mu}{\eps}) \log\log D$ term, can be rewritten as
$$ \int_{\log D}^D a_i( s_i - \log u) \frac{du}{u\log u}$$
which by the change of variables $s = s_i - \log u$ becomes
$$ \int_{s_i - \log D}^{s_i - \log\log D} a_i(s) \frac{ds}{s_i-s}.$$
However, from Fubini's theorem and \eqref{ar} we have
\begin{align*}
\int_{s_i - \log D}^{s_i - \log\log D} a_i(s) \frac{ds}{s_i-s} 
&= \int_{s_i - \log D}^{s_i - \log\log D} a_i(s) (\int_{s_i - \log D}^s \frac{dt}{(s_i-t)^2} + \frac{1}{\log D})\ ds \\
&= \int_{s_i-\log D}^{s_i - \log\log D} (\int_t^{s_i-\log\log D} a_i(s)\ ds) \frac{dt}{(s_i-t)^2} + \frac{1}{\log D} \int_{s_i-\log D}^{s_i-\log\log D} a_i(s)\ ds \\
&\leq
 \int_{s_i-\log D}^{s_i - \log\log D} (\int_t^{s_i} a_i(s)\ ds) \frac{dt}{(s_i-t)^2} + \frac{1}{\log D} \int_{s_i-\log D}^{s_i} a_i(s)\ ds \\
&\leq 
 \int_{s_i-\log D}^{s_i - \log\log D} (1 - \delta)^{1/2} (s_i-t) \frac{dt}{(s_i-t)^2} + \frac{1}{\log D} (1-\delta)^{1/2} \log D \\
&= (1-\delta)^{1/2} (\log\log D - \log\log\log D + 1)
\end{align*}
and the right-hand side is equal to $(1-\delta)^{1/2} \log\log D$ up to errors that can be absorbed into the $O( \frac{\mu}{\eps}) \log\log D$ term.  For $\mu$ small enough, this gives a contradiction when compared with \eqref{eq27}, proving Corollary \ref{elliott-1}(i).

We are left with proving part (ii) of Corollary \ref{elliott-1}. Since sets of logarithmic Banach density zero automatically have logarithmic density zero, we already know from Corollary \ref{elliott-1}(i) that for each tuple $(h_1,\dots,h_k)$ of integers and every $m \geq 1$, there is a set ${\mathcal X}_{h_1,\dots,h_k,m}$ of logarithmic density zero such that
$$
| \E_{n \leq X} g_1(n+h_1) \cdots g_k(n+h_k) |\leq \frac{1}{m}$$
for all $X$ outside of ${\mathcal X}_{h_1,\dots,h_k,m}$.  Since the number of tuples $(h_1,\dots,h_k,m)$ is countable, a standard diagonalisation construction then gives a further set ${\mathcal X}_0$, still of logarithmic density zero, such that for each $h_1,\dots,h_k,m$, all but finitely many of the elements of ${\mathcal X}_{h_1,\dots,h_k,m}$ are contained in ${\mathcal X}_0$.  For instance, one could remove finitely many elements from ${\mathcal X}_{h_1,\dots,h_k,m}$ to create a subset ${\mathcal X}'_{h_1,\dots,h_k,m}$ with the property that
$$ \E_{X \leq Y}^{\log} 1_{{\mathcal X}'_{h_1,\dots,h_k,m}}(X) \leq 2^{-h_1-\cdots-h_k-m}$$
for all $Y \geq 1$, and then take ${\mathcal X}_0$ to be the union of all the ${\mathcal X}'_{h_1,\dots,h_k,m}$, which thus differs from a finite union of these sets by a set of arbitrarily small logarithmic density (and finite unions of the sets ${\mathcal X}'_{h_1,\dots,h_k,m}$ have logarithmic density $0$).  By construction one then has
$$ \limsup_{X \to \infty; X \not \in {\mathcal X}_0} | \E_{n \leq X} g_1(n+h_1) \cdots g_k(n+h_k) |\leq \frac{1}{m}$$
for all $h_1,\dots,h_k,m$, and the claim follows.
\end{proof}

\begin{remark}\label{18-rem} An inspection of the above argument shows that one could have replaced the sequence $n \mapsto g_1(n+h_1) \dots g_1(n+h_k)$ by any other bounded sequence $n \mapsto F(n)$ for which the analogue of Theorem \ref{main}(i) holds, or more precisely that 
$$ \E^{\log\log}_{d \in \N} |\lim^*_{X \to \infty} \E_{n \leq X/d} F(n)| = 0$$
for any generalised limit $\lim^*_{X \to \infty}$.
\end{remark}

Next we prove Corollary \ref{elliott-2}.

\begin{proof}[Proof of Corollary \ref{elliott-2}] By Corollary \ref{elliott-1}, we are done unless $g_1 g_2$ weakly pretends to be a twisted Dirichlet
character $n \mapsto \chi(n)n^{it}$, so suppose that this is indeed the case for some $\chi$ and $t$.   Then for any generalised limit $\lim^*_{X \to \infty}$, the corresponding correlations $f_d(a)$ defined by \eqref{fda-def} obey the property \eqref{hd} for some function $f: \Z \to \Disk$.  If this function $f$ was vanishing at $a=1$ for every choice of the generalised limit, then one could repeat the proof of Corollary \ref{elliott-1} to obtain the claim (cf. Remark \ref{18-rem}).  Thus suppose instead that we can find a generalised limit $\lim^*_{X \to \infty}$ such that $f(1) \neq 0$ for the function $f$ provided by Theorem \ref{main}(ii).  By \eqref{hd} and the triangle inequality, this implies that
$$ \E^{\log\log}_{d \in \N} f_d(1) d^{it} = f(1) \neq 0.$$
In particular, for $D$ sufficiently large, one has
$$ |\E_{d \leq D}^{\log\log}  f_d(1) d^{it} | \gg 1$$
and hence by summation by parts we have
$$ |\E_{d \leq D}^{\log}  f_d(1) d^{it} | \gg 1$$
for a sequence of arbitrarily large $D$.   If $D$ obeys the above estimate, then by \eqref{fda-def} we have
$$ |\lim^*_{X \to \infty} \E_{d \leq D}^{\log} d^{it} \E_{n \leq X/d} g_1(n+h_1) g_2(n+h_2)| \gg 1$$
and thus there exist arbitrarily large $X$ such that
$$ |\E_{d \leq D}^{\log} d^{it} \E_{n \leq X/d} g_1(n+h_1) g_2(n+h_2)| \gg 1.$$
This implies that
$$ |\E_{d \leq D}^{\log} d^{it} \E_{cX/d \leq n \leq X/d} g_1(n+h_1) g_2(n+h_2)| \gg 1$$
for some small constant $c>0$ (not depending on $D$ and $X$).  This yields
$$ \bigg|\sum_{\log D\leq d \leq D} d^{it} \sum_{cX/d \leq n \leq X/d} g_1(n+h_1) g_2(n+h_2)\bigg| \gg X \log D$$
The left-hand side can be rearranged (discarding negligible errors, assuming $D$ is large enough) as
$$ \bigg|\sum_{cX/D \leq n \leq X/\log D} \bigg(\sum_{cX/n \leq d \leq X/n} d^{it}\bigg) g_1(n+h_1) g_2(n+h_2)\bigg| \gg X \log D.$$
By summation by parts, for $cX/D\leq n \leq X/\log D$ we have
\begin{align*}
\sum_{cX/n \leq d \leq X/n} d^{it} = \alpha n^{-it} \frac{X}{n} + o_{D \to \infty}(1),\quad \alpha=\frac{X^{it}-(cX)^{it}\cdot c}{1+it},
\end{align*}
where in particular the quantity $\alpha$ is bounded and is independent of $n$.  For $D$ large enough, we conclude that
$$ \bigg|\sum_{cX/D \leq n \leq X/\log D} \frac{n^{-it}}{n} g_1(n+h_1) g_2(n+h_2)\bigg| \gg \log D.$$
and hence
$$ |\E_{X/D \leq n \leq X}^{\log} n^{-it} g_1(n+h_1) g_2(n+h_2)| \gg 1.$$
Approximating $n^{-it}$ by $(n+h_1)^{-it}$, we conclude that there exist arbitrarily large $D$ such that
$$ |\E_{X/D \leq n \leq X}^{\log} (n+h_1)^{-it} g_1(n+h_1) g_2(n+h_2)| \gg 1$$
for arbitrarily large $X$.  But this contradicts the $k=2$ case of the logarithmically averaged Elliott conjecture \cite[Corollary 1.5]{tao} applied to the functions $n \mapsto n^{-it} g_1(n)$ and $n \mapsto g_2(n)$ (note that the hypothesis \eqref{suptx} for $g_1$ implies the same hypothesis for $n \mapsto n^{-it} g_1(n)$).  This completes the proof of part (i) of Corollary \ref{elliott-2}.

Part (ii) of Corollary \ref{elliott-2} is then deduced from Corollary \ref{elliott-2}(i) using precisely the same diagonalisation argument that was used to deduce Corollary \ref{elliott-1}(ii) from Corollary \ref{elliott-1}(i).
\end{proof}

\begin{remark} The above argument shows more generally that if the logarithmically averaged Elliott conjecture\footnote{One needs the variant where we sum over $X/\omega(X) \leq n \leq X$ rather than $n \leq X$.} (resp. Chowla conjecture) is proven for a given value of $k$, then the unweighted form of the Elliott conjecture (resp. Chowla conjecture) for that value of $k$ holds at almost all scales.  (Note in the case of the Chowla conjecture that the parameter $t$ will vanish, since $\lambda^k=1$ for even $k$ and $\lambda^k=\lambda$ does not pretend to be any twisted Dirichlet character for odd $k$.)
\end{remark}

\begin{remark}\label{rem1}
With small modifications, we can adapt the above proofs to prove Corollary \ref{cor1}. Firstly, by approximating the indicator function $1_{P^{+}(n)<P^{+}(n+1)}$ as in \cite[Section 4]{tera-binary} by a linear combination of indicator functions of the form $1_{P^{+}(n)<n^{\alpha}, P^{+}(n+1)<n^{\beta}}$, we can reduce the proof to showing
\begin{align}\label{eq9}
\lim_{X\to \infty; X\not \in \mathcal{X}_0}\mathbb{E}_{n\leq X}1_{P^{+}(n)<n^{\alpha}}1_{P^{+}(n+1)<n^{\beta}} =\rho(1/\alpha)\rho(1/\beta),   \end{align}
where $\rho(\cdot)$ is the Dickmann function and $\alpha, \beta \in (0,1)$ are any rational numbers. Since the set of rationals is countable, by a diagonal argument (as in the proof of Corollary \ref{elliott-2}(ii)) it suffices to prove \eqref{eq9} with $\alpha, \beta$ fixed. One starts by proving a version of the structural theorem (Theorem \ref{main}) in the case of the functions $g_1(n)=1_{P^{+}(n)<n^{\alpha}}$, $g_2(n)=1_{P^{+}(n)<n^{\beta}}$; these are not quite multiplicative functions, but they can be approximated as $1_{P^{+}(n)<n^{\alpha}}=1_{P^{+}(n)<X^{\alpha}}+O(1_{P^{+}(n)\in [(X/\log X)^{\alpha},X^{\alpha}]})$ for $n\in [X/\log X,X]$. The $O(\cdot)$ term has negligible contribution in the entropy decrement argument by standard estimates on smooth numbers, so the proof of Proposition \ref{dollop} goes through for the generalised limits associated to the correlations of $g_1$ and $g_2$ with $G\equiv 1$ (so certainly \eqref{gp-inv} holds). We did not use the specific properties of $g_1,g_2$ anywhere else in the proof of Theorem \ref{main}, so that proof goes through, giving 
\begin{equation}\label{eq28}\mathbb{E}_{d\in \mathbb{N}}|\lim^{*}_{X\to \infty}\mathbb{E}_{n\leq X/d}g_1(n)g_2(n+1)-c^{*}|=0\end{equation}
for all generalised limits $\lim^{*}$ and some constant $c^{*}$ depending on $\lim^{*}$. From \cite[Proof of Corollary 1.19]{tera-binary}, we have a logarithmic version of \eqref{eq9}, so following the proof of Corollary \ref{elliott-2} verbatim, we see that $c^{*}=\rho(1/\alpha)\rho(1/\beta)$. Then from Remark \ref{18-rem} we deduce \eqref{eq9}. We leave the details to the interested reader.
\end{remark}

\begin{proof}[Proof of Corollary \ref{chow}]
We observe from Corollary \ref{elliott-1}(i) (for odd $k$) or Corollary \ref{elliott-2}(ii) (for $k=2$) that for any distinct integers $h_1,\dots,h_k$ and $\eps>0$, one has
$$ |\E_{n \leq X} \lambda(n+h_1) \cdots \lambda(n+h_k)| \leq \eps $$
for all $X$ outside of a set $\mathcal{X}_{k,\varepsilon}$ of logarithmic Banach density zero, and hence also of logarithmic density zero.  The claim then follows by the same diagonalisation argument used to prove Corollary \ref{elliott-1}(ii) and Corollary \ref{elliott-2}(ii).
\end{proof}

\section{Consequences of the isotopy formulae}\label{isotopy-sec}

Before proving the isotopy formula in the form of Theorem \ref{isotopy}, let us state a variant of it that involves the quantities $f_d(a)$ present in Theorem \ref{main}. In what follows, a sequence $b_n$ of integers is said to be \emph{asymptotically rough} if for any given prime $p$, one has $p \nmid b_n$ for all sufficiently large $n$.  For instance, any increasing sequence of primes is asymptotically rough, as is the sequence $-1,-1,-1,\dots$.

\begin{lemma}\label{le1}  Let the notation and hypotheses be as in Theorem \ref{main}.  Let $n \mapsto \chi(n)n^{it}$ be a twisted Dirichlet character that weakly pretends to be $g_1 \cdots g_k$, if one exists; otherwise, choose $\chi$ and $t$ arbitrarily.  Let $a$ be an arbitrary integer.
\begin{itemize}
\item[(i)]  (Archimedean isotopy) For any natural number $h$, one has
$$ \lim_{X\to \infty}\E^{\log\log}_{d \leq X} |f_{hd}(a) - h^{-it} f_d(a)| = 0.$$
\item[(ii)]  (Non-archimedean isotopy) For any asymptotically rough sequence $b_n$ of natural numbers, one has
$$ \lim_{n\to \infty}\lim_{X\to \infty}\E^{\log\log}_{d \leq X} |f_{d}(ab_n) - \chi(b_n) f_d(a)|=0.$$
 In particular, since the sequence $b_n = -1$ is asymptotically rough, one has
\begin{equation}\label{lln}
\lim_{X\to \infty}\E^{\log\log}_{d \leq X} |f_{d}(-a) - \chi(-1) f_d(a)| = 0.
\end{equation}
\end{itemize}
\end{lemma}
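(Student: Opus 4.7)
The plan is to deduce both parts from Theorem~\ref{main}, using Lemma~\ref{chug} to transfer $\log\log$ averages over $d$ to $\log$ averages over primes, and using Lemma~\ref{isotop} from the proof of Theorem~\ref{main} for part~(ii).

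For part~(i), I would first dispose of the non-pretentious case: by Theorem~\ref{main}(i) both $\E^{\log\log}_{d\leq X}|f_{d}(a)|$ and (after a transfer step) $\E^{\log\log}_{d\leq X}|f_{hd}(a)|$ tend to $0$, and the triangle inequality takes care of the $h^{-it}$ coefficient regardless of what $\chi,t$ are. To justify the transfer step, note that by \eqref{fd1d2} the function $d\mapsto|f_{hd}(a)|$ is bounded log-Lipschitz in $d$, so Lemma~\ref{chug} applied with natural number $1$ gives $\E^{\log\log,*}_{d\in\N}|f_{hd}(a)|=\E^{\log,*}_{p\in\P}|f_{hp}(a)|$, while Lemma~\ref{chug} applied with natural number $h$ to the log-Lipschitz function $d\mapsto|f_{d}(a)|$ gives $\E^{\log\log,*}_{d\in\N}|f_{d}(a)|=\E^{\log,*}_{p\in\P}|f_{hp}(a)|$, so the two $\log\log$ averages coincide under every generalised limit. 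In the pretentious case Theorem~\ref{main}(ii) asserts $\lim_{X\to\infty}\E^{\log\log}_{d\leq X}|f_{d}(a)-f(a)d^{-it}|=0$; the same Lemma~\ref{chug} trick applied to the log-Lipschitz function $d\mapsto|f_{d}(a)-f(a)d^{-it}|$ (here one uses $|d_1^{-it}-d_2^{-it}|\ll|t||\log d_1-\log d_2|$) yields $\lim_{X\to\infty}\E^{\log\log}_{d\leq X}|f_{hd}(a)-f(a)(hd)^{-it}|=0$. Since $f(a)(hd)^{-it}=h^{-it}\cdot f(a)d^{-it}$, a final triangle inequality against Theorem~\ref{main}(ii) itself produces the bound $|f_{hd}(a)-h^{-it}f_{d}(a)|\to 0$ on $\log\log$ average, as desired.

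For part~(ii), if $g_1\cdots g_k$ does not weakly pretend to be a twisted Dirichlet character, then Theorem~\ref{main}(i) directly gives that $\E^{\log\log}_{d\leq X}|f_{d}(ab_n)|$ and $\E^{\log\log}_{d\leq X}|f_{d}(a)|$ both tend to $0$ as $X\to\infty$, for every fixed $n$, and the triangle inequality finishes the argument (indeed in this case one does not even need to take $n\to\infty$). In the pretentious case I would return to the proof of Theorem~\ref{main} and invoke Lemma~\ref{isotop}: for each $\eps>0$ there is a Dirichlet character $\chi_\eps$ of some period $q=q(\eps)$ with $\limsup_{X\to\infty}\E^{\log\log}_{d\leq X}|f_{d}(ab)-\chi_\eps(b)f_{d}(a)|\ll\eps$ for all $b$ coprime to $q$. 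The analysis following \eqref{lop} in the proof of Theorem~\ref{main} shows that for $\eps$ sufficiently small, $\chi_\eps$ is induced from the same primitive character $\chi$, so $\chi_\eps(b)=\chi(b)$ whenever $(b,q)=1$. Since $b_n$ is asymptotically rough, for each $\eps$ we have $(b_n,q(\eps))=1$ for all $n$ past some $N(\eps)$, and hence $\limsup_{X\to\infty}\E^{\log\log}_{d\leq X}|f_{d}(ab_n)-\chi(b_n)f_{d}(a)|\ll\eps$ for all such $n$. Taking $\limsup_{n\to\infty}$ and then $\eps\to0$ yields the conclusion; the case $b_n\equiv-1$ is an instance of this, giving \eqref{lln}.

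The only real subtlety is the dependence of $\chi_\eps$ and its modulus $q(\eps)$ on $\eps$, and the fact that a single primitive $\chi$ underlies all the $\chi_\eps$ for $\eps$ small; but this is exactly what is established in the course of the proof of Theorem~\ref{main}, so we simply reuse that analysis rather than re-running it. Everything else reduces to bounded log-Lipschitz manipulations with Lemma~\ref{chug} and straightforward triangle-inequality bookkeeping.
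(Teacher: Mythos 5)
Your proof is correct, but for part (ii) it takes a genuinely different route than the paper. The paper's proof of part (ii) treats Theorem~\ref{main}(ii) as a black box: from \eqref{hd} applied with $a$ and with $ab_n$ one derives the exact identity $\E^{\log\log}_{d\in\N}|f_d(ab_n)-\chi(b_n)f_d(a)|=|f(ab_n)-\chi(b_n)f(a)|$, and then the remaining input is the structural assertion in Theorem~\ref{main}(ii) that $f$ is a uniform limit of $\chi$-isotypic periodic functions $F_i$; the roughness of $b_n$ gives $F_i(ab_n)=\chi(b_n)F_i(a)$ for $n$ large, hence $f(ab_n)-\chi(b_n)f(a)=o_{n\to\infty}(1)$, finishing cleanly. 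You instead reopen the proof of Theorem~\ref{main} and invoke Lemma~\ref{isotop}, which furnishes a character $\chi_\eps$ of period $q(\eps)$ with an $O(\eps)$ isotopy bound. The paper in fact places a footnote at exactly this point (``Note that Lemma~\ref{isotop} does not directly imply Claim~(ii), since the Dirichlet character present in that lemma depends on the error $\eps$''), and you correctly identify and neutralize this obstruction by appealing to the co-training analysis following \eqref{lop} in the proof of Theorem~\ref{main}, which shows the $\chi_\eps$ are all induced from a fixed primitive character. Your route is valid, but it is more circuitous, requiring you to re-import internal machinery from the proof of Theorem~\ref{main} that its statement (in particular the uniform-limit-of-$\chi$-isotypic-functions clause) was designed to encapsulate. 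Two minor remarks: your argument as written only controls a $\limsup$ in the inner variable $X$; you would need one extra triangle-inequality step against \eqref{hd} (as in the paper's identity above) if you want the inner $\lim$ to be seen to exist, though for a nonnegative quantity $\limsup=0$ already forces $\lim=0$ so this is purely expository. For part (i), both proofs use the same idea; the paper simply restricts the $\log\log$ average in \eqref{hd} to multiples of $h$ and renormalizes, whereas you route through Lemma~\ref{chug} twice (once with parameter $h$, once with parameter $1$), which is a slightly longer but entirely equivalent way to pass from $d$-averages to $hd$-averages.
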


A variant of  Lemma \ref{le1}(ii) (for logarithmic averaging, and with $b_n$ specialised to the primes in an arithmetic progression $1\ (q)$ for $q$ a period of $\chi$) was obtained in \cite[Corollary 3.7]{fh-furstenberg}. 
	
\begin{proof}  We may assume without loss of generality that $g_1 \cdots g_k$ weakly pretends to be $n \mapsto \chi(n)n^{it}$, as the claims follow from Theorem \ref{main}(i) otherwise.  Extracting out the contribution to \eqref{hd} from multiples of $h$, we see that
$$ \E^{\log\log}_{d \in \N} |f_{hd}(a) - f(a) (hd)^{-it}| = 0,$$
and also by \eqref{hd} we have
\begin{align*}
\E^{\log\log}_{d \in \N} |f_{d}(a) - f(a) d^{-it}| = 0.    
\end{align*}
Now the claim follows from the triangle inequality.  

To prove\footnote{Note that Lemma \ref{isotop} does not directly imply Claim (ii), since the Dirichlet character present in that lemma depends on the error $\varepsilon$.} Claim (ii), we observe from \eqref{hd} that 
\begin{align*}
\E^{\log\log}_{d \in \N} |f_{d}(ab_n) - f(ab_n)d^{-it}| =0
\end{align*}
for all $n$, and 
\begin{align*}
\E^{\log\log}_{d \in \N} |f_{d}(a) - f(a)d^{-it}| =0.
\end{align*}
Putting together the above two equalities we have
\begin{align}\label{eq11} 
\E^{\log\log}_{d \in \N} |f_{d}(ab_n) - \chi(b_n)f_d(a)| =|f(ab_n)-\chi(b_n)f(a)|
\end{align}
By Theorem \ref{main}, $f$ is the uniform limit of $\chi$-isotypic periodic functions $F_i$.  For each such $F_i$, we have $F_i(ab_n) = \chi(b_n) F_i(a)$ for all sufficiently large $n$, since the sequence $b_n$ is asymptotically rough. Thus also $f(ab_n)=\chi(b_n)f(a)+o_{n\to \infty}(1).$ Combining this with \eqref{eq11}, the claim follows. 
\end{proof}

We then use Lemma \ref{le1} to deduce the isotopy formulae (Theorem \ref{isotopy}).

\begin{proof}[Proof of Theorem \ref{isotopy}] We start with the proof of (i). By a diagonalisation argument, similarly as in the proof of Corollary \ref{elliott-1}(ii), it suffices to show that for any fixed rational $q>0$ there exists a set $\mathcal{X}_{0,q}$ of logarithmic density $0$ such that the claim holds with $\mathcal{X}_{0,q}$ in place of $\mathcal{X}_0$. Next, we argue that it suffices to consider the case $q\in \mathbb{N}$. Suppose that the case $q\in \mathbb{N}$ has been established, and let $q=a/b$ with $a,b\in \mathbb{N}$. Then if $\mathcal{X}_{0,q}:=(1/b)\mathcal{X}_{0,a}\cup (1/b)\mathcal{X}_{0,b}$ (which is still a set of logarithmic density zero), we have
\begin{align*}
&\lim_{X\to \infty; X\not \in \mathcal{X}_{0,q}}\left(\mathbb{E}_{n\leq X}g_1(n+h_1)\cdots g_k(n+h_k)-(a/b)^{it}\mathbb{E}_{n\leq bX/a}g_1(n+h_1)\cdots g_k(n+h_k)\right)\\
&=\lim_{X\to \infty; X\not \in \mathcal{X}_{0,q}}\left(b^{-it}\mathbb{E}_{n\leq bX}g_1(n+h_1)\cdots g_k(n+h_k)-(a/b)^{it}\mathbb{E}_{n\leq bX/a}g_1(n+h_1)\cdots g_k(n+h_k)\right)=0.
\end{align*}
Hence we may assume from now on that $q\in \mathbb{N}$. 
Observe that the statement of Lemma \ref{le1}(i) with $a=1$ can be written in the form
\begin{align*}
\mathbb{E}_{d\in \mathbb{N}}^{\log \log}|\lim^{*}_{X\to \infty}\mathbb{E}_{n\leq X/d}\big(g_1(n+h_1)\cdots g_k(n+h_k)-q^{-it}\mathbb{E}_{b\in \mathbb{Z}/q\mathbb{Z}}g_1(qn+b+h_1)\cdots g_k(qn+b+h_k)\big)|=0    
\end{align*}
for every generalised limit $\lim^{*}_{X\to \infty}$. By following the proof of Corollary \ref{elliott-1}(i) verbatim (see also Remark \ref{18-rem}), this leads to 
\begin{align}\label{eq12}
\lim_{X\to \infty;X\not \in \mathcal{X}_{0,q}} \mathbb{E}_{n\leq X}\big(g_1(n+h_1)\cdots g_k(n+h_k)-q^{-it}\mathbb{E}_{b\in \mathbb{Z}/q\mathbb{Z}}g_1(qn+b+h_1)\cdots g_k(qn+b+h_k)\big)=0    
\end{align}
for some set $\mathcal{X}_{0,q}$ of logarithmic density zero. But rewriting \eqref{eq12}, it becomes the identity asserted in Theorem \ref{isotopy}(i).

We turn to the proof of part (ii), which is similar. Again by a diagonalisation argument, it suffices to prove the statement for fixed $a$ rather than all $a$. From Lemma \ref{le1}(ii) we have
\begin{align*}
\mathbb{E}_{d\in \mathbb{N}}^{\log \log}|\lim^{*}_{X\to \infty}\mathbb{E}_{n\leq X/d}(g_1(n-ah_1)\cdots g_k(n-ah_k)-\chi(-1)g_1(n+ah_1)\cdots g_k(n+ah_k))|=0    
\end{align*}
for every generalised limit $\lim^{*}_{X\to \infty}$. Just as in the proof of part (i) of the Theorem, by the proof of Corollary \ref{elliott-1}(i) (cf. Remark \ref{18-rem}) we get
\begin{align*}
\lim_{X\to \infty;X\not \in \mathcal{X}_{0,a}} \mathbb{E}_{n\leq X}(g_1(n-ah_1)\cdots g_k(n-ah_k)-\chi(-1)g_1(n+ah_1)\cdots g_k(n+ah_k))|=0       
\end{align*}
for some set $\mathcal{X}_{0,a}$ of logarithmic density zero, and this is what we wished to prove.
\end{proof}

Morally speaking, the Archimedean isotopy formula implies that the argument of the correlation \eqref{eq19} becomes equidistributed at large scales whenever $t\neq 0$.  Unfortunately we cannot quite establish this claim as stated, because of the discontinuous nature of the complex argument function.  However, if we insert a continuous mollifier to remove this discontinuity, we can obtain equidistribution.  More precisely, we have the following result.

\begin{theorem}[Equidistribution of argument away from zero] \label{limits} Let $k\geq 1$, let $h_1,\ldots, h_k$ be integers and $g_1,\ldots, g_k:\mathbb{N}\to \mathbb{D}$ be $1$-bounded multiplicative functions. Suppose that the product $g_1\cdots g_k$ weakly pretends to be a twisted Dirichlet character $n\mapsto \chi(n)n^{it}$, where $t\neq 0$. Let us denote
\begin{align*}
S(X):=\mathbb{E}_{n\leq X}g_1(n+h_1)\cdots g_k(n+h_k).    
\end{align*}
Let $\psi: \C \to \C$ be a continuous function that vanishes in a neighbourhood of the origin, and let
$$ \overline{\psi}(z) \coloneqq \frac{1}{2\pi} \int_0^{2\pi} \psi(e^{i\theta} z)\ d\theta$$
be $\psi$ averaged over rotations around the origin.  Then we have
$$ \E^{\log}_{X \in \N} \psi(S(X)) - \overline{\psi}(S(X)) = 0.$$
\end{theorem}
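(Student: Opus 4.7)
My plan is to reduce the theorem, via an angular Fourier expansion, to an equidistribution claim for $S(X)/|S(X)|$ that can be derived directly from the Archimedean isotopy formula of Theorem \ref{isotopy}(i).

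For the reduction, set $F(z) := \psi(z) - \overline{\psi}(z)$, a continuous function on the closed unit disk $\overline{\D}$ that vanishes on a neighborhood $\{|z| \leq \delta\}$ of the origin. The circular Fourier coefficients $F_m(r) := \frac{1}{2\pi}\int_0^{2\pi} F(re^{i\theta}) e^{-im\theta}\, d\theta$ are continuous in $r$, vanish for $r \leq \delta$, and $F_0 \equiv 0$ by construction of $\overline{\psi}$. The uniform continuity of $F$ on $\overline{\D}$ makes the Fejér approximations $\sigma_N F(z) = \sum_{0 < |m| \leq N} (1 - |m|/(N+1)) F_m(|z|) (z/|z|)^m$ converge to $F$ uniformly on $\overline{\D}$ (since Fejér's theorem is uniform in $r$); each summand extends continuously across $z = 0$ because $F_m$ vanishes near $0$. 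Thus it suffices to show that for every non-zero integer $m$ and every continuous $\phi:[0,1] \to \C$ vanishing on $[0,\delta]$, the logarithmic average $\E^{\log}_{X \leq Y} U(S(X))$ tends to $0$, where $U(z) := \phi(|z|)(z/|z|)^m$ and $U(0) := 0$ (so $U$ is continuous on $\C$).

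To establish this, choose a positive rational $q$ with $q^{imt} \neq 1$; such $q$ exist because $mt \neq 0$ and $\{q \in \Q : q > 0, q^{imt} = 1\}$ is a proper subgroup of $(\{q \in \Q : q > 0\}, \cdot)$ (if both $2^{imt} = 1$ and $3^{imt} = 1$ held, then $\log 2 / \log 3 \in \Q$, a contradiction). Theorem \ref{isotopy}(i) supplies an exceptional set $\mathcal{X}_0$ of logarithmic density zero outside of which $S(X) - q^{it} S(X/q) \to 0$. Combining the uniform continuity of $U$ with the equivariance $U(q^{it} w) = q^{imt} U(w)$ (valid because $|q^{it}| = 1$) yields $U(S(X)) - q^{imt} U(S(X/q)) \to 0$ as $X \to \infty$ outside $\mathcal{X}_0$. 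Logarithmic averaging, using boundedness of $U$ together with log-density zero of $\mathcal{X}_0$, then gives
\[\E^{\log}_{X \leq Y} U(S(X)) = q^{imt}\, \E^{\log}_{X \leq Y} U(S(X/q)) + o_{Y \to \infty}(1).\]
A routine dilation-invariance calculation for logarithmic averages---using that $X \mapsto S(X)$, and hence $X \mapsto U(S(X))$, is log-Lipschitz (as follows from a direct comparison of $\lfloor X\rfloor S(X) = \sum_{n \leq X} g_1(n+h_1)\cdots g_k(n+h_k)$ at nearby $X$) and a change of variables in the defining logarithmic integral---gives $\E^{\log}_{X \leq Y} U(S(X/q)) = \E^{\log}_{X \leq Y} U(S(X)) + O_q(1/\log Y)$. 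Substituting yields $(1 - q^{imt}) \E^{\log}_{X \leq Y} U(S(X)) = o_{Y \to \infty}(1)$, and dividing by the non-zero factor $1 - q^{imt}$ finishes the argument.

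The substantive step is the invocation of the isotopy formula; once it is applied, everything else is formal. The subtlety to watch for is the Fejér-based reduction, namely verifying that the approximating building blocks $F_m(|z|)(z/|z|)^m$ extend continuously across $z = 0$ so that one may safely evaluate them at $S(X)$ (which can be zero or very close to zero). This is exactly where the hypothesis that $\psi$ vanishes in a neighborhood of the origin is used: it forces $F_m$ to vanish near $0$, ensuring the continuity of the building blocks there.
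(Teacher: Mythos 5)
Your proposal is correct and takes essentially the same route as the paper: reduce by Fejér summation in the angular variable to a single $\chi$-isotypic building block (you call it $U$, the paper writes $\Psi(r)e^{ik\theta}$), apply Theorem \ref{isotopy}(i) for a fixed dilation $q$ with $q^{imt}\neq 1$, and combine the resulting identity with the dilation-invariance of logarithmic averages (via the log-Lipschitz bound for $S$) to get $(1-q^{imt})\E^{\log}_{X\leq Y}U(S(X))=o(1)$. The only cosmetic differences are that the paper takes $q$ to be an integer while you take a positive rational (both permitted by Theorem \ref{isotopy}(i)), and that you spell out why the building blocks extend continuously across $z=0$, a point the paper handles implicitly.
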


\begin{proof}  Since $S$ is bounded, we may assume that $\psi$ is compactly supported.  By replacing $\psi$ by $\psi - \overline{\psi}$ we may assume that $\overline{\psi}=0$.  Approximating $\psi$ uniformly by partial Fourier series (e.g., using Fej\'er summation) in the angular variable, and using linearity, we may assume that $\psi$ takes the form $\psi(re^{i\theta}) = \Psi(r) e^{ik\theta}$ for some non-zero integer $k$ and some continuous compactly supported function $\Psi$ that vanishes in a neighbourhood of the origin (cf. the standard proof of the Weyl equidistribution criterion \cite{weyl}).  In particular we have the isotopy formula
\begin{equation}\label{psi-iso}
 \psi(\omega z) = \omega^k \psi(z)
\end{equation}
for all $z \in \C$ and $\omega \in S^1$.

Let $q > 1$ be an integer to be chosen later.
From Theorem \ref{isotopy}(i), outside of an exceptional set $\mathcal{X}_0$ of logarithmic density zero, we have
$$ \lim_{X \to \infty; X \not \in {\mathcal X}_0} S(X) - q^{it} S(X/q) = 0.$$
From \eqref{psi-iso} and the uniform continuity of $\psi$, this implies that
$$ \lim_{X \to \infty; X \not \in {\mathcal X}_0} \psi(S(X)) - q^{ikt} \psi(S(X/q)) = 0.$$
Taking logarithmic averages, we conclude that
$$ \E^{\log}_{X \in \N} \psi(S(X)) - q^{ikt} \psi(S(X/q)) = 0.$$
On the other hand, in analogy to \eqref{fd1d2}, we have the log-Lipschitz bound
\begin{align}\label{eq13}
|S(x)-S(y)|\ll |\log x-\log y|.   
\end{align}
We can use this and the uniform continuity of $\psi$ to estimate, for $X_0$ large enough,  
\begin{align*}
\mathbb{E}_{X\leq X_0}^{\log}\psi(S(X/q))&=\mathbb{E}_{X\leq X_0/q}^{\log}\mathbb{E}_{0\leq b<q} \psi(S(X+b/q))+o(1)\\
&=\mathbb{E}_{X\leq X_0/q}^{\log} \psi(S(X))+o(1)\\
&=\mathbb{E}_{X\leq X_0}^{\log} \psi(S(X))+o(1).
\end{align*}
Hence
$$ \E^{\log}_{X \in \N} \psi(S(X)) - \psi(S(X/q)) = 0.$$ By the triangle inequality, we conclude that
$$ (1 - q^{ikt}) \E^{\log}_{X \in \N} \psi(S(X)) = 0.$$
Since $t \neq 0$, we can select $q$ so that $q^{ikt} \neq 1$ for all $k\in \mathbb{N}$.  The claim follows.

\end{proof}

Suppose that $\Psi: [0,+\infty) \to [0,+\infty)$ is a non-negative continuous function vanishing near the origin, and let $I \subset \R/2\pi \Z$ be an arc in the unit circle $\R/2\pi \Z$.  Applying Theorem \ref{limits} to upper and lower approximants to the discontinuous function $z \mapsto \Psi(|z|) 1_I(\mathrm{arg}(z))$, and taking limits, we conclude that
$$ \E^{\log}_{X \in \N} (1_I(\mathrm{arg}(S(X))) - \frac{|I|}{2\pi}) \Psi(|S(X)|) = 0$$
where $|I|$ denotes the length of $I$.  Informally, this asserts that the argument $\mathrm{arg}(S(X))$ is uniformly distributed in the unit circle, so long as one inserts a continuous weight of the form $\Psi(|S(X)|)$.  It would be more aesthetically pleasing if we could replace this weight with a discontinuous cutoff such as $1_{|S(X)| \geq \eps}$, but we were unable to exclude the possibility that $|S(X)|$ lingers very close to $\eps$ for very many scales $X$, with the event that $|S(X)| \geq \eps$ being coupled in some arbitrary fashion to the argument of $S(X)$, leading to essentially no control on the argument of $S(X)$ restricted to the event $|S(X)| \geq \eps$.  On the other hand, if one was able to show that $S(X)$ did not concentrate at the origin in the sense that
$$ \limsup_{X_0 \to \infty} \E^{\log}_{X \leq X_0} 1_{|S(X)| \leq \eps} \to 0$$
as $\eps \to 0$, then the above arguments do show that 
$$ \E^{\log}_{X \in \N} 1_I(\mathrm{arg}(S(X))) = \frac{|I|}{2\pi}$$
for all intervals $I$, so that $\mathrm{arg}(S(X))$ is indeed asymptotically equidistributed on the unit circle.  Alternatively, by selecting the cutoff $\eps$ using the pigeonhole principle to ensure that $|S(X)|$ does not linger too often in a neighbourhood of $\eps$, one can prove statements such as the following: If $\delta > 0$, then for all sufficiently large $X_0$ outside of a set of logarithmic density zero, one can find $0 < \eps \leq \delta$ with the approximate equidistribution property 
$$ \E^{\log}_{X \leq X_0} \left(1_I(\mathrm{arg}(S(X))) - \frac{|I|}{2\pi}\right) 1_{|S(X)| \geq \eps} \leq \delta$$
for all intervals $I \subset \R/2\pi \Z$.
We leave the proof of this assertion to the interested reader.

Now we investigate the consequences of the non-archimedean isotopy formula (Theorem \ref{isotopy}(ii)). Many of these consequences tell us that the correlation \eqref{eq19} tends to $0$ along almost all scales also in some cases that are not covered by Corollary \ref{elliott-1}(i).

\begin{definition} We say that a tuple $(g_1,\ldots, g_k)$ of functions is \emph{reflection symmetric} if $g_i=g_{k+1-i}$ for all $1\leq i\leq \frac{k+1}{2}$. Similarly, we say that a tuple $(h_1,\ldots, h_k)$ of integers is \emph{progression-like} if $h_1+h_{k}=h_i+h_{k+1-i}$ for all $1\leq i\leq \frac{k+1}{2}$. In particular, all arithmetic progressions are progression-like.
\end{definition}

\begin{theorem} \label{even} Let $k\geq 1$ and let $h_1,\ldots, h_k$ be integers. Suppose that $\chi$ is an odd Dirichlet character (i.e., $\chi(-1)=-1$) with $\chi(n+h_1+h_k)=\chi(n)$ for all $n$. Let $g_1,\ldots, g_k:\mathbb{N}\to \mathbb{D}$ be multiplicative functions such that the product $g_1\cdots g_k$ weakly pretends to be a Dirichlet character $\psi$ with $\psi$ even. Suppose additionally that the tuple $(g_1,\ldots, g_k)$ is reflection symmetric and that the tuple $(h_1,\ldots, h_k)$ is progression-like. Then there exists an exceptional set $\mathcal{X}_0$ of logarithmic density $0$, such that
\begin{align*}
\lim_{X\to \infty; X\not \in \mathcal{X}_0}\mathbb{E}_{n\leq X} \chi(n)g_1(n+h_1)g_2(n+h_2)\cdots g_k(n+h_k)=0. 
\end{align*}

\end{theorem}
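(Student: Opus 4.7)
The plan is to regard the multiplier $\chi(n)$ as an additional zero-shifted multiplicative function in the correlation, so that the non-archimedean isotopy formula of Theorem \ref{isotopy}(ii) becomes applicable, and then to exploit the reflection symmetry of the $g_i$ and the progression-like nature of the $h_i$ to close the loop. Concretely, set $g_0 := \chi$ and $h_0 := 0$, and view
$$A(X) := \mathbb{E}_{n \leq X}\chi(n)g_1(n+h_1)\cdots g_k(n+h_k)$$
as the correlation associated to the $(k+1)$-tuple $(\chi, g_1, \ldots, g_k)$ with shifts $(0, h_1, \ldots, h_k)$. The product $\chi \cdot g_1 \cdots g_k$ weakly pretends to $\chi\psi$ (multiplying by $\chi$ affects the pretentious distance only at the finitely many primes dividing its modulus), and since $\chi$ is odd while $\psi$ is even we have $(\chi\psi)(-1) = -1$. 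Applying Theorem \ref{isotopy}(ii) with $a = 1$ to this $(k+1)$-tuple then yields a set $\mathcal{X}_0$ of logarithmic density zero such that
$$\lim_{X \to \infty,\, X \notin \mathcal{X}_0}\bigl(B(X) + A(X)\bigr) = 0,$$
where $B(X) := \mathbb{E}_{n \leq X}\chi(n)g_1(n-h_1)\cdots g_k(n-h_k)$ and the sign $+$ reflects $(\chi\psi)(-1) = -1$.

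The second step is to verify that $B(X) - A(X) = o(1)$ as $X \to \infty$, with no exceptional scales required. I would substitute $m := n - (h_1+h_k)$ in the definition of $B(X)$. The progression-like hypothesis gives $h_1 + h_k - h_i = h_{k+1-i}$, so each factor becomes $g_i(n-h_i) = g_i(m + h_{k+1-i}) = g_{k+1-i}(m + h_{k+1-i})$ by reflection symmetry, and reindexing via $j := k+1-i$ recovers $\prod_j g_j(m + h_j)$. The congruence $\chi(n+h_1+h_k) = \chi(n)$ simultaneously gives $\chi(n) = \chi(m)$. The resulting sum differs from $A(X)$ only in its range of summation, which has shifted by $|h_1 + h_k|$ units; since the integrand is $1$-bounded, this alters the average by $O(1/X)$.

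Combining the two facts gives $2A(X) \to 0$ along $X \notin \mathcal{X}_0$, which is the desired conclusion. I do not anticipate a serious obstacle; the one creative observation is that the parity hypotheses — $\chi$ odd and $\psi$ even — are precisely what is needed for the sign in the non-archimedean isotopy formula to match (rather than cancel) the sign produced by the reflection symmetry, so the two identities combine productively instead of trivially. If $\chi$ were even or $\psi$ were odd, the method as stated would only reproduce the tautology $A(X) = A(X)$.
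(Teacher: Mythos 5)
Your proof is correct and follows essentially the same route as the paper's: apply Theorem \ref{isotopy}(ii) with $a=1$ to the $(k+1)$-tuple $(\chi, g_1,\dots,g_k)$ with shifts $(0,h_1,\dots,h_k)$, noting that $\chi g_1\cdots g_k$ weakly pretends to the odd character $\chi\psi$, and then translate by $h_1+h_k$ using progression-likeness and reflection symmetry to identify the reflected correlation with the original. The only difference is cosmetic: the paper's write-up contains a small typo (spurious factors $g(n)$), while your version states the computation cleanly.
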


\begin{proof} Note that the function $g_1\cdots g_k\chi$ weakly pretends to be $\psi \chi$, which is an odd character. Hence by Theorem \ref{isotopy}(ii) there exists some set $\mathcal{X}_0$ of logarithmic density $0$, such that for $X\not \in \mathcal{X}_0$ we have
\begin{align*}
&\mathbb{E}_{n\leq X} \chi(n)g(n)g_1(n+h_1)\cdots g_k(n+h_k)\\
&=-\mathbb{E}_{n\leq X} \chi(n)g_1(n-h_1)g(n-h_2)\cdots g(n-h_k)+o(1).
\end{align*}
By translation invariance, the periodicity assumption $\chi(n+h_1+h_k)=\chi(n)$, and the progression-likeness of $(h_1,\ldots, h_k)$, the latter expression equals
\begin{align*}
&-\mathbb{E}_{n\leq X} \chi(n+h_1+h_k)g_1(n+h_k)g_2(n+h_1+h_k-h_2)\cdots g_k(n+h_1)+o(1)\\
&=-\mathbb{E}_{n\leq X}\chi(n)g_1(n+h_k)g_2(n+h_{k-1})\cdots g_k(n+h_1)+o(1).
\end{align*}
Since the tuple $(g_1,\ldots, g_k)$ is reflection symmetric, this equals the the original correlation with a minus sign, proving the statement. 
\end{proof}

Corollary \ref{even-chowla} is an immediate consequence of Theorem \ref{even}.

\begin{proof}[Proof of Corollary \ref{even-chowla}]
Taking $g_1=\ldots=g_k=\lambda$ and $(h_1,\ldots, h_k)=(0,a,\ldots, (k-1)a)$ in Theorem \ref{even}, we readily obtain the claim.
\end{proof}

In other words, the shifted products of the Liouville function can be shown to be orthogonal to some suitable Dirichlet characters also when there is an even number of shifts. As already mentioned, also the weaker, logarithmic version of Corollary \ref{even-chowla} is new.

The next theorem is in the same spirit as Theorem \ref{even}, but with somewhat different conditions.

\begin{theorem} \label{odd} Let $k\geq 1$ be odd, and let  $g_1,\ldots, g_k:\mathbb{N}\to \mathbb{D}$ be multiplicative functions such that the product $g_1\cdots g_k$ weakly pretends to be a Dirichlet character $\chi$ with $\chi$ odd. Suppose also that the tuple $(g_1,\ldots, g_k)$ is reflection symmetric and that $(h_1,\ldots, h_k)$ is a progression-like tuple of integers. Then there exists an exceptional set $\mathcal{X}_0$ of logarithmic density $0$, such that
\begin{align*}
\lim_{X\to \infty; X\not \in \mathcal{X}_0}\mathbb{E}_{n\leq X} g_1(n+h_1)g_2(n+h_2)\cdots g_k(n+h_k)=0.   
\end{align*}
\end{theorem}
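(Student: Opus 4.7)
The plan is to deduce Theorem~\ref{odd} directly from the non-Archimedean isotopy formula (Theorem~\ref{isotopy}(ii)), exactly in the spirit of Theorem~\ref{even} but without the auxiliary twist, by letting the oddness of $\chi$ itself supply the minus sign and letting the symmetry hypotheses supply a reflection identity.

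First, since $g_1 \cdots g_k$ weakly pretends to the odd Dirichlet character $\chi$ (the $t=0$ case of a twisted Dirichlet character), Theorem~\ref{isotopy}(ii) applied with $a=1$ and $\chi(-1)=-1$ yields a set $\mathcal{X}_0$ of logarithmic density zero such that
\[
\lim_{X\to\infty;\, X\notin\mathcal{X}_0}\Bigl(\E_{n\le X} g_1(n-h_1)\cdots g_k(n-h_k)+\E_{n\le X} g_1(n+h_1)\cdots g_k(n+h_k)\Bigr)=0.
\]
Thus it will suffice to show that the ``reflected'' correlation on the left agrees with the original correlation up to an $o(1)$ error as $X\to\infty$.

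Second, I would exploit the two symmetry hypotheses. Translation invariance of the unweighted average gives, with an error of $O(|h_1+h_k|/X)=o(1)$, that
\[
\E_{n\le X} g_1(n-h_1)\cdots g_k(n-h_k)=\E_{n\le X} g_1(n+h_1+h_k-h_1)\cdots g_k(n+h_1+h_k-h_k)+o(1).
\]
Since $(h_1,\ldots,h_k)$ is progression-like, $h_1+h_k-h_i=h_{k+1-i}$, so the $i$th factor becomes $g_i(n+h_{k+1-i})$. Reindexing by $j=k+1-i$ and invoking the reflection symmetry $g_{k+1-j}=g_j$, the product reorganises into $\prod_{j=1}^{k} g_j(n+h_j)$, so the reflected correlation equals the original correlation up to $o(1)$.

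Combining the two displays, twice the original correlation tends to zero along $X\notin\mathcal{X}_0$, giving the conclusion. There is no serious obstacle: all the analytic work is already packaged in Theorem~\ref{isotopy}(ii), and what remains is the same kind of symmetry manipulation used in Theorem~\ref{even}, with the oddness of the character that $g_1\cdots g_k$ pretends to now playing the role previously played by the explicit twist $\chi$.
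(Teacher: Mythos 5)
Your proposal is correct and takes essentially the same route as the paper: the paper proves Theorem~\ref{odd} by the same symmetry manipulation as Theorem~\ref{even}, i.e.\ apply Theorem~\ref{isotopy}(ii) to get the reflected correlation with a minus sign (from $\chi(-1)=-1$), then translate by $h_1+h_k$ and use progression-likeness and reflection symmetry to identify the reflected correlation with the original, forcing the original to vanish. Your write-out of that adaptation is accurate, including the $o(1)$ bookkeeping from the translation.
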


\begin{proof} As with Theorem \ref{even}, this follows directly from the isotopy formula (Theorem \ref{isotopy}) and translation invariance.  
\end{proof}

This theorem can for example be applied to the variants
\begin{align*}
 \lambda_q(n):=e\left(\frac{2\pi i\Omega(n)}{q}\right)  \end{align*}
of the Liouville function that take values in the $q$th roots of unity. Here $\Omega(n)$ is the number of prime factors of $n$ with multiplicities. We obtain the following.

\begin{corollary}\label{cor-1}  Let $k\geq 1$ be odd, $q\in \mathbb{N}$, and let $\chi$ be an odd Dirichlet character. Then there exists an exceptional set $\mathcal{X}_0$ of logarithmic density $0$, such that
\begin{align*}
\lim_{X\to \infty; X\not \in \mathcal{X}_0}\mathbb{E}_{n\leq X} \lambda_q(n)\chi(n)\lambda_q(n+a)\chi(n+a)\cdots \lambda_q(n+(k-1)a)\chi(n+(k-1)a)=0.   
\end{align*}
\end{corollary}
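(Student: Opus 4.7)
The plan is to apply Theorem \ref{odd} when $q \mid k$ and Corollary \ref{elliott-1}(ii) otherwise. First I would set $g_i := \lambda_q \chi$ and $h_i := (i-1)a$ for $1 \leq i \leq k$, and verify the easy hypotheses: each $g_i$ is $1$-bounded and completely multiplicative (since $\Omega$ is totally additive, $\lambda_q$ is completely multiplicative, and so is $\chi$); the tuple $(g_1,\dots,g_k)$ is trivially reflection symmetric because all entries coincide; and the arithmetic progression $(0,a,\dots,(k-1)a)$ is progression-like, as $h_i + h_{k+1-i} = (k-1)a$ is independent of $i$.

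The nontrivial step is to analyze the product $G := g_1 \cdots g_k = \lambda_q^k \chi^k$. Since $\lambda_q(p) = e^{2\pi i/q}$ for every prime $p$, we have $\lambda_q^k(p) = \zeta$ for the constant $\zeta := e^{2\pi i k/q}$. If $q \mid k$, then $\zeta = 1$ so $G = \chi^k$ is itself a Dirichlet character, and since $\chi$ and $k$ are both odd, $\chi^k(-1) = \chi(-1)^k = -1$, making $G$ an odd Dirichlet character. In this case $G$ trivially weakly pretends to itself, so Theorem \ref{odd} applies directly (with its character taken to be $\chi^k$) and yields the conclusion.

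If instead $q \nmid k$, I claim $G$ does not weakly pretend to any twisted Dirichlet character $\psi(n) n^{it}$; once this is shown, Corollary \ref{elliott-1}(ii) produces the desired set $\mathcal{X}_0$. To establish the claim, note that $\chi^k(p)\overline{\psi(p)}$ coincides outside finitely many primes with a Dirichlet character $\eta$. By the prime number theorem in arithmetic progressions together with partial summation, the sum $\sum_{p \leq X} \eta(p) p^{-it}/p$ is bounded whenever either $t \neq 0$ or $\eta$ is non-principal, and equals $\log\log X + O(1)$ precisely when $t = 0$ and $\eta$ is principal. Consequently
\[
\D(G, \psi(n) n^{it}; X)^2 = \log\log X - \mathrm{Re}\left(\zeta \sum_{p \leq X} \frac{\eta(p) p^{-it}}{p}\right) + O(1)
\]
is $o(\log\log X)$ only if $t = 0$, $\eta$ is principal, and $\mathrm{Re}(\zeta) = 1$; the last condition forces $q \mid k$, contradicting our case assumption.

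The main obstacle is conceptually the pretentiousness dichotomy above, but the analysis is a routine application of PNT in arithmetic progressions; no new ideas beyond combining Theorem \ref{odd} and Corollary \ref{elliott-1}(ii) are needed.
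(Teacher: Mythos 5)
Your proof is correct and takes essentially the same route as the paper: split on whether $q \mid k$, apply Theorem \ref{odd} in the pretentious case (noting $\chi^k$ is odd since $\chi$ and $k$ are odd), and fall back on Corollary \ref{elliott-1} in the non-pretentious case. The only difference is that you spell out the pretentious-distance computation showing $\lambda_q^k\chi^k$ cannot weakly pretend to any twisted Dirichlet character when $q\nmid k$, whereas the paper asserts this in one line (with a small typo, writing $g^k\chi^k$ where $g^k=\lambda_q^k\chi^k$ is meant); your added detail is accurate.
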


\begin{proof} We apply Theorem \ref{odd} with $g_j(n)=g(n):=\chi(n)\lambda_q(n)$ and $(h_1,\ldots, h_k)=(0,\ldots, (k-1)a)$. Then if $q\nmid k$, the function $g^k\chi^k$ does not weakly pretend to be any twisted Dirichlet character, since $g^k$ does not do so. In this case, we may appeal to  Corollary \ref{elliott-1}(i) to obtain the claim. Suppose then that $q\mid k$. Then $g^k$ weakly pretends to be $\chi^k$, which is an odd character, so Theorem \ref{odd} is applicable.  
\end{proof}

\begin{example} Let $\chi_3$ be the odd Dirichlet character of modulus $3$ and $\chi_8$ any odd Dirichlet character of modulus $8$. Then from Corollary \ref{cor-1} and partial summation, for any sequences $1\leq \omega_m\leq x_m$ of reals tending to infinity we have
\begin{align*}
\lim_{m\to \infty}\mathbb{E}^{\log}_{x_m/\omega_m\leq n\leq x_m}\chi_3(n)\lambda_3(n)\lambda_3(n+3)\lambda_3(n+6)=0    
\end{align*}
and 
\begin{align*}
\lim_{m\to \infty}\mathbb{E}^{\log}_{x_m/\omega_m\leq n\leq x_m}\lambda_3(n)\lambda_3(n+2)\lambda_3(n+4)\chi_8(n+6)=0.    
\end{align*}

This is seen by applying the corollary to the functions $g_j(n)=\lambda_3(n)\chi_3(n)$ with $a=3$ and $g_j(n)=\lambda_3(n)\chi_8(n)$ with $a=2$ and using $n(n+2)(n+4)\equiv n+6 \pmod{8}$ for $n$ odd.
\end{example}

We then turn to bounding more general correlations of multiplicative functions where the shifts involved no longer form a progression-like tuple. In the case of triple correlations, we obtain savings that are explicit but nevertheless far from the desired $o(1)$ bound. 

\begin{theorem}[Savings in logarithmic three-point Elliott conjecture] Let $g:\mathbb{N}\to \mathbb{D}$ be a multiplicative function, and let $h_1,h_2,h_3$ be distinct integers. Suppose that $g$ is non-pretentious in the sense that
\begin{align*}
\liminf_{X\to \infty}\inf_{|t|\leq X}\mathbb{D}(g,n\mapsto\chi(n)n^{it},x)=\infty    
\end{align*}
for every Dirichlet character $\chi$. Then for any sequences $1\leq \omega_m\leq x_m$ tending to infinity we have
\begin{align}\label{eqtn6}
\limsup_{m\to \infty}|\mathbb{E}^{\log}_{x_m/\omega_m\leq n\leq x_m} g(n+h_1)g(n+h_2)g(n+h_3)|\leq \frac{1}{\sqrt{2}}.
\end{align}

\end{theorem}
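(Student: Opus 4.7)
My plan is to combine the non-archimedean isotopy formula (Theorem \ref{isotopy}(ii)) with a Cauchy--Schwarz inequality and the logarithmically averaged two-point Elliott theorem from \cite{tao}. First I would reduce to the case that $g^{3}$ weakly pretends to an untwisted Dirichlet character $\chi$ (i.e.\ $t=0$). If $g^{3}$ does not weakly pretend to any twisted Dirichlet character, then the logarithmic analogue of Corollary \ref{elliott-1} (available via Theorem \ref{lae}) already yields that the log-averaged correlation is $o(1)$. If $g^{3}$ weakly pretends to $\chi(n)n^{it}$ with $t\neq 0$, then Theorem \ref{main}(ii) (transferred to almost all scales as in the proof of Corollary \ref{elliott-1}) gives $\mathbb{E}_{n\leq Y}F(n)=f(1)Y^{-it}+o(1)$ outside a set of scales of logarithmic density zero, and partial summation on $[x_{m}/\omega_{m},x_{m}]$ then yields a logarithmic average of size $O(1/\log\omega_{m})=o(1)$ since $\omega_{m}\to\infty$.

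With $g^{3}$ pretending to an untwisted $\chi$, translate so $h_{1}=0$ and set $a:=h_{2}$, $b:=h_{3}$. Define
\[A(x):=\mathbb{E}^{\log}_{x/\omega\leq n\leq x}g(n)g(n+a)g(n+b),\qquad B(x):=\mathbb{E}^{\log}_{x/\omega\leq n\leq x}g(n)g(n+b-a)g(n+b).\]
Theorem \ref{isotopy}(ii), combined with the translation invariance of logarithmic averages on intervals whose log length $\log\omega_{m}$ tends to infinity, yields $B(x)=\chi(-1)A(x)+o(1)$ outside an exceptional set of scales of logarithmic density zero; using $\chi(-1)^{2}=1$,
\[2A(x)=A(x)+\chi(-1)B(x)+o(1)=\mathbb{E}^{\log}_{x/\omega\leq n\leq x}g(n)g(n+b)\bigl[g(n+a)+\chi(-1)g(n+b-a)\bigr]+o(1).\]
Applying Cauchy--Schwarz together with $|g|\leq 1$,
\[|2A(x)|^{2}\leq 2+2\chi(-1)\operatorname{Re}\mathbb{E}^{\log}_{x/\omega\leq n\leq x}g(n+a)\overline{g(n+b-a)}+o(1).\]
In the non-arithmetic-progression case $b\neq 2a$, the two shifts $a$ and $b-a$ are distinct, and since $g$ (and hence $\overline{g}$) is strongly non-pretentious, the two-point logarithmic Elliott theorem of \cite{tao} forces the right-hand two-point correlation to be $o(1)$, giving $|A|^{2}\leq \tfrac{1}{2}+o(1)$, as desired.

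The remaining case is the arithmetic progression $b=2a$, in which the two triples $(0,a,b)$ and $(0,b-a,b)$ coincide. If $\chi$ is odd, the isotopy relation $A=\chi(-1)A=-A$ already forces $A=0$. If $\chi$ is even, however, the two-point correlation appearing above degenerates to $\mathbb{E}^{\log}|g(n+a)|^{2}$, which may be bounded away from zero (e.g.\ when $|g|\equiv 1$), and the Cauchy--Schwarz yields only the trivial $|A|\leq 1$. I expect this AP case with even $\chi$ to be the main obstacle of the proof, and would attempt to resolve it by invoking Theorem \ref{Str} (which realises the logarithmic correlation as a uniform limit of $\chi$-isotypic periodic functions) together with a refined Cauchy--Schwarz applied after a shift-and-twist trick chosen so that the auxiliary two-point correlation produced has a non-pretentious product function and can therefore be controlled by \cite{tao}.
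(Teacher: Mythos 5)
Your argument for the non-AP case agrees closely with the paper's: apply the non-archimedean isotopy formula (Theorem \ref{isotopy}(ii)), average the resulting pair of identities, factor, apply Cauchy--Schwarz, and invoke the two-point logarithmic Elliott theorem of \cite{tao} to dispose of the cross term (valid precisely because $h_2 \neq h_1 + h_3 - h_2$ in the non-AP case). Your initial reduction to the untwisted case is more roundabout than needed: a twisted Dirichlet character $\chi(n)n^{it}$ with $t\neq 0$ is in particular not an (untwisted) Dirichlet character, so both the ``no twisted pretention'' case and the ``$t\neq 0$'' case are already covered by a single appeal to \cite[Theorem 1.2(ii)]{tt}, and the partial-summation argument via Theorem \ref{main}(ii) is superfluous.

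The genuine gap is exactly where you suspect it. When $h_1,h_2,h_3$ is an arithmetic progression and $\chi$ is even, the isotopy relation becomes a tautology, the Cauchy--Schwarz cross term degenerates to $\mathbb{E}^{\log}|g(n+h_2)|^{2}$ (which is $1+o(1)$ when $|g|\equiv 1$), and your ``shift-and-twist'' resolution is only a sketch with no details supplied. The paper does not resolve this case by a new argument either: it dispatches all arithmetic-progression triples at the very start by citing \cite[Lemma 5.3]{klurman-mangerel}, which already gives a bound of this quality for progressions, and restricts the isotopy/Cauchy--Schwarz argument to the non-AP case. So the missing ingredient in your proposal is this citation to an existing result, not a new idea; until that case is filled in, the proof is incomplete.
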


\begin{remark} This looks superficially similar to \cite[Lemma 5.3]{klurman-mangerel} (and also to \cite[Proposition 7.1]{tt}, which achieves the better upper bound of $\frac{1}{2}$ for real-valued multiplicative functions). However, importantly, the shifts $h_i$ are allowed to be arbitrary here, while in the aforementioned results they had to form an arithmetic progression for the method to work.
\end{remark}

\begin{proof}  If $h_1,h_2,h_3$ is an arithmetic progression, we may apply \cite[Lemma 5.3]{klurman-mangerel}, so we may henceforth suppose that $h_1,h_2,h_3$ is not an arithmetic progression.

If the function $g^3$ does not weakly pretend to be any Dirichlet character, we get the bound $0$ for the $\limsup$ by \cite[Theorem 1.2(ii)]{tt}. Suppose then that $g^3$ weakly pretends to be some character $\chi$. By the isotopy formula (Theorem \ref{isotopy}), partial summation and translation invariance, we have 
\begin{align}\label{eqtn5}
\begin{aligned}
&\mathbb{E}^{\log}_{x_m/\omega_m\leq n\leq x_m}g(n+h_1)g(n+h_2)g(n+h_3)\\
&=\chi(-1)\mathbb{E}^{\log}_{x_m/\omega_m\leq n\leq x_m}g(n+h_1)g(n+h_1+h_3-h_2)g(n+h_3)+o_{m\to \infty}(1).
\end{aligned}
\end{align}
In particular, the first part of \eqref{eqtn5} is the average of both parts of the equation. Hence, the average on the left-hand side of \eqref{eqtn6} is up to $o_{m\to \infty}(1)$ bounded by
\begin{align*}
&\frac{1}{2}|\mathbb{E}^{\log}_{x_m/\omega_m\leq n\leq x_m}g(n+h_1)g(n+h_3)(g(n+h_2)+\chi(-1)g(n+h_1+h_3-h_2))|\\
&\leq \frac{1}{2}\mathbb{E}^{\log}_{x_m/\omega_m\leq n\leq x_m} |g(n+h_2)+\chi(-1)g(n+h_1+h_3-h_2))|.   
\end{align*}
By the Cauchy-Schwarz inequality, this is bounded by
\begin{align*}
&\frac{1}{2}(\mathbb{E}^{\log}_{x_m/\omega_m\leq n\leq x_m} |g(n+h_2)+\chi(-1)g(n+h_1+h_3-h_2))|^2)^{\frac{1}{2}}\\
&\leq\frac{1}{2}(\mathbb{E}^{\log}_{x_m/\omega_m\leq n\leq x_m}(2+2\chi(-1)\textnormal{Re}(g(n+h_2)\overline{g}(n+h_1+h_3-h_2))))^{\frac{1}{2}}.    
\end{align*}
Since $h_2\neq h_1+h_3-h_2$ by assumption, we can apply \cite[Corollary 1.5]{tao} to see that the term involving real parts contributes $o_{m\to \infty}(1)$. Then we indeed get a bound of $\frac{1}{\sqrt{2}}+o_{m\to \infty}(1)$ for the correlation.
\end{proof}

\begin{remark} For specific multiplicative functions one can do slightly better by not applying Cauchy-Schwarz. For example, in the case $g(n)=\lambda_3(n)$ one gets a bound of $\frac{2}{3}$ for the correlation by using the fact (following from \cite[Corollary 1.5]{tao}) that $(\lambda_3(n),\lambda_3(n+h))$ takes for fixed $h\neq 0$ each of the possible $9$ values with logarithmic density $1/9$.
\end{remark}

\section{The case of few sign patterns}\label{sign-sec}

In this section we prove Theorem \ref{sign-thm}.  Assume the hypotheses of that theorem.  Let $h$ be a natural number.  By the Hahn-Banach theorem, it suffices to show that
$$ \E^*_{n \in \N} \lambda(n) \lambda(n+h) = 0$$
for every generalised limit $\lim^*_{X \to \infty}$.  Accordingly, let us fix such a limit.  As usual, we introduce the correlation sequences
\begin{equation}\label{fda-def-again}
 f_d(a) \coloneqq \lim^*_{X \to \infty} \E_{n \leq X/d} \lambda(n) \lambda(n+ah)
\end{equation}
for every real $d>0$.  Our task is now to show that
$$ f_1(1) = 0.$$
Proposition \ref{dollop} (noting that $G(p) = 1$ in our case) establishes the approximate isotopy formula
$$ \sup_{d >0} \E^{\log}_{p \leq P} |f_{dp}(a) - f_d(ap)| \leq \eps$$
whenever $\eps>0$ and $P$ is sufficiently large depending on $\eps$.  But because of our hypothesis of few sign patterns, we can obtain a stronger result in which the logarithmic weighting on the averages is removed.

\begin{proposition}[Improved approximate isotopy formula]  Let $f_d(a)$ be as in \eqref{fda-def-again}, let $\eps>0$, and let $a$ be a natural number.  Assume the hypotheses of Theorem \ref{sign-thm}.  Then there exist arbitrarily large $m$ such that
$$ \sup_{d > 0} \E_{2^m \leq p < 2^{m+1}} |f_{dp}(a) - f_d(ap)| \leq \eps.$$
\end{proposition}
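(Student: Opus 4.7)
The strategy is to upgrade the entropy decrement argument underlying Proposition \ref{eda} (that is, \cite[Theorem 3.6]{tt}) by exploiting the few-sign-patterns hypothesis to sharpen the entropy budget from the trivial $O(K)$ to $o(K)$. In the proof of Proposition \ref{eda}, the exceptional set $\mathcal{M}$ arises from a budget-versus-decrement comparison: the initial joint entropy $H(\mathbf{g}^{(d)}(1), \ldots, \mathbf{g}^{(d)}(K))$ bounds the total decrement, which is distributed over bad dyadic scales $m$, each contributing an amount of order $\eps^{O(1)}/m$. The trivial pointwise bound $H \leq K$ yields the conclusion $\sum_{m \in \mathcal{M}} \frac{1}{m} \ll_{\eps} 1$, which, since the exceptional set $\mathcal{M}_d$ depends on $d$, is too weak to produce arbitrarily large scales $m$ outside of $\bigcup_{d>0} \mathcal{M}_d$.

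The first step is to observe that, under the hypothesis of Theorem \ref{sign-thm}, for any given $\eps>0$ we may select $K$ arbitrarily large so that the set of Liouville sign patterns of length $K$ has cardinality at most $\exp(\eps K/\log K)$. By the Furstenberg correspondence (Proposition \ref{corr}), the distribution of $(\mathbf{g}^{(d)}(1),\ldots,\mathbf{g}^{(d)}(K))$ is supported (as a $\{-1,+1\}^K$-valued distribution) on (translates of) the Liouville sign patterns, which form a set of the same cardinality. Hence the initial joint entropy is bounded by $\eps K/\log K$, \emph{uniformly in} $d>0$.

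Inserting this improved budget into the telescoping identity in \cite[Section 3]{tt}, the total $\sum_{m\in\mathcal{M}_d}\frac{1}{m}$ bound, restricted to the range $m \leq C\log K$ of dyadic scales that is accessible with window of length $K$, is uniformly (in $d$) reduced to $o_{K\to\infty}(1)$. Taking $K$ sufficiently large, we conclude that the exceptional set of $m \in [1, C\log K]$ has logarithmic measure $o(1)$ uniformly in $d>0$; in particular there exists some $m$ in this range lying outside every $\mathcal{M}_d$. For such a good $m$, summing the entropy-decrement conclusion over primes $p \in [2^m, 2^{m+1})$ and combining with the identity \eqref{fdag} exactly as in the deduction of Proposition \ref{dollop} from Proposition \ref{eda} yields the required uniform bound. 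Since $K$ may be taken arbitrarily large, arbitrarily large good $m$ are produced.

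The main obstacle is ensuring that the budget-versus-decrement analysis respects the uniformity in $d$. The entropy budget itself is $d$-uniform because the support constraint (Liouville sign patterns) does not depend on $d$; what must be checked is that only this marginal entropy, and not finer $d$-dependent statistics of $\mathbf{g}^{(d)}$, appears on the additive side of the telescoping comparison in \cite[Section 3]{tt}. Once this uniformity is verified, the argument proceeds as sketched. A minor point is that $d$ ranges over $(0,+\infty)$ rather than $\N$; however, the log-Lipschitz bound \eqref{fd1d2} lets one discretise $d$ at multiplicative resolution $\eps$ at the cost of $O(\eps)$, reducing the supremum to a uniformity over a discrete family that is automatically handled by the $d$-uniform entropy decrement.
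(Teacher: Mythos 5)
Your key insight is exactly the right one — the few-sign-patterns hypothesis caps the entropy of the sign-pattern random variable and this must somehow degrade the mutual-information dichotomy in the entropy decrement argument. But your implementation via the accumulated (telescoping) version of the entropy decrement has a genuine gap that prevents you from extracting the required uniformity in $d$.

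Concretely: the telescoping argument produces, for each fixed $d$, an exceptional set $\mathcal{M}_d$ of scales with $\sum_{m \in \mathcal{M}_d} \frac{1}{m}$ small, and your sharpened budget makes this quantity $o_{K\to\infty}(1)$ uniformly in $d$. You then conclude ``in particular there exists some $m$ in this range lying outside every $\mathcal{M}_d$.'' This step does not follow. The set $\mathcal{M}_d$ is allowed to shift around as $d$ varies, so a uniform bound on $\sum_{m \in \mathcal{M}_d} \frac{1}{m}$ for each individual $d$ does not control $\bigcup_d \mathcal{M}_d$; for instance each $\mathcal{M}_d$ could be a single large $m$ depending on $d$, and the union of these could exhaust the whole range $[1, C\log K]$. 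Your discretisation remark at the end (reducing to a countable family of $d$'s via the log-Lipschitz bound \eqref{fd1d2}) does not repair this, since a countable union of $d$-dependent bad sets is still uncontrolled.

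The paper instead does not telescope at all. It applies the dichotomy from \cite[\S 3]{tt} at a \emph{single} scale $m$, crucially \emph{without} conditioning on $\mathbf{Y}_{<m}$, so that the alternative to $\E^{(d)}|F| \leq \eps$ is that the \emph{unconditional} mutual information $\mathbf{I}(\mathbf{X}^{(d)}:\mathbf{Y}^{(d)})$ exceeds $\eps^5 \frac{2^m}{m}$. Then one uses the trivial bound $\mathbf{I}(\mathbf{X}^{(d)}:\mathbf{Y}^{(d)}) \leq \mathbf{H}(\mathbf{X}^{(d)})$, and the hypothesis of Theorem \ref{sign-thm} supplies arbitrarily large $m$ for which $\mathbf{H}(\mathbf{X}^{(d)}) \leq \log(\text{number of sign patterns of length } (2ah+1)2^m) \leq \eps^5 \frac{2^m}{m}$. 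Since this upper bound on the entropy is $d$-independent (it depends only on the support of $\mathbf{X}^{(d)}$, which is a set of Liouville sign patterns regardless of $d$), the second alternative is ruled out for \emph{all} $d > 0$ simultaneously at that fixed $m$. No accumulation over scales is needed, and hence no $d$-dependent exceptional set appears at all. This single-scale, unconditional use of the mutual information bound is the missing idea; it is what converts ``for each $d$, most $m$ are good'' into ``some $m$ is good for all $d$.''
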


This formula also applies for negative $a$, but in this argument we only require the case of positive $a$ (in fact, for the binary correlations considered here, we only need the case $a=1$).

\begin{proof}  This will be a modification of the arguments in \cite[\S 3]{tt}, and we freely use the notation from that paper.  

Let $d>0$ be real, let $a$ be an integer, and let $m$ be a large integer to be chosen later.  We allow implied constants to depend on $h,a$, but they will remain uniform in $d,m,\eps$.  From \eqref{fdag} we have the formula
$$
f_{dp}(a) - f_d(ap) = \E^{(d)} \mathbf{g}^{(d)}(0) \mathbf{g}^{(d)}(aph) (p 1_{p|\mathbf{n}^{(d)}}-1) + O(\eps)
$$
for all $2^m \leq p < 2^{m+1}$, if $m$ is sufficiently large depending on $\eps$, and where $\mathbf{g}^{(d)} = \mathbf{g}^{(d)}_1 =\mathbf{g}^{(d)}_2$ and $\mathbf{n}^{(d)}$ are the random variables provided by Proposition \ref{corr} (with $g_1=g_2=\lambda$).  We can thus write the expression
$$ \E_{2^m \leq p < 2^{m+1}} |f_{dp}(a) - f_d(ap)| $$
as
$$
\E^{(d)} \E_{2^m \leq p < 2^{m+1}} c_p \mathbf{g}^{(d)}(0) \mathbf{g}^{(d)}(aph) (p 1_{p|\mathbf{n}^{(d)}}-1) + O(\eps)$$
for some sequence of complex numbers $c_p$ with $|c_p| \leq 1$.  By stationarity we can also write this expression as
$$
\E^{(d)} \E_{1 \leq l \leq 2^m} \E_{2^m \leq p < 2^{m+1}} c_p \mathbf{g}^{(d)}(l) \mathbf{g}^{(d)}(l+aph) (p 1_{\mathbf{n}^{(d)} = -l\ (p)}-1) + O(\eps)$$
and thus
$$ \E_{2^m \leq p < 2^{m+1}} |f_{dp}(a) - f_d(ap)| = \E^{(d)} F( \mathbf{X}^{(d)}, \mathbf{Y}^{(d)} )$$
where $\mathbf{X}^{(d)} = \mathbf{X}^{(d)}_m \in \{-1,+1\}^{(2ah+1) 2^m}$, $\mathbf{Y}^{(d)} = \mathbf{Y}^{(d)}_m \in \prod_{2^m \leq p < 2^{m+1}} \Z/p\Z$ are the random variables
\begin{align*}
\mathbf{X}^{(d)} &\coloneqq \left( \mathbf{g}^{(d)}(l) \right)_{1 \leq l \leq (2ah+1) 2^m} \\
\mathbf{Y}^{(d)} &\coloneqq ( \mathbf{n}^{(d)}\ (p))_{2^m \leq p < 2^{m+1}}
\end{align*}
and $F: \{-1,+1\}^{(2ah+1) 2^m} \times \prod_{2^m \leq p < 2^{m+1}} \Z/p\Z \to \C$ is the function
\begin{align*}
& F( (g_l)_{1 \leq l \leq (2ah+1)2^m}, (n_p)_{2^m \leq p < 2^{m+1}})\\
&\quad \coloneqq \E_{1 \leq l \leq 2^m} \E_{2^m \leq p < 2^{m+1}} c_p g_l g_{l+aph} (p 1_{n_p=-l\ (p)} - 1).
\end{align*}
Repeating the arguments in \cite[\S 3]{tt} verbatim (but without the additional conditioning on the $\mathbf{Y}_{<m}$ random variable), we conclude that
$$ \E^{(d)} |F( \mathbf{X}^{(d)}, \mathbf{Y}^{(d)} )| \leq \eps $$
unless we have the mutual information bound
$$ \mathbf{I}( \mathbf{X}^{(d)} : \mathbf{Y}^{(d)} ) > \eps^5 \frac{2^m}{m}.$$
At this point we deviate from the arguments in \cite[\S 3]{tt} by using the trivial bound
$$ \mathbf{I}( \mathbf{X}^{(d)} : \mathbf{Y}^{(d)} ) \leq \mathbf{H}( \mathbf{X}^{(d)} )$$
to conclude that we will have the desired bound
$$ \E_{2^m \leq p < 2^{m+1}} |f_{dp}(a) - f_d(ap)| \leq \eps $$
whenever $\mathbf{X}^{(d)}$ obeys the entropy bound
$$ \mathbf{H}( \mathbf{X}^{(d)} ) \leq \eps^5 \frac{2^m}{m}.$$
By Jensen's inequality, this bound will hold if $\mathbf{X}^d$ attains at most $\exp( \eps^5 \frac{2^m}{m} )$ values with positive probability.
Using the correspondence principle (Proposition \ref{corr}), this claim in turn is equivalent to the number of possible sign patterns $(\lambda(n+l))_{1 \leq l \leq (2ah+1) 2^m}$ not exceeding $\exp( \eps^5 \frac{2^m}{m} )$; note that this assertion does not depend on $d$, so we in fact obtain the uniform bound
$$ \sup_{d>0} \E_{2^m \leq p < 2^{m+1}} |f_{dp}(a) - f_d(ap)| \leq \eps $$
in this case.  But by the hypothesis of Theorem \ref{sign-thm}, this assertion holds for arbitrarily large values of $m$.
\end{proof}

Now we establish Theorem \ref{sign-thm}.  By the above proposition, for any $\eps>0$, there exist arbitrarily large $m$ such that
$$ f_1(1) = \E_{2^m \leq p < 2^{m+1}} f_{P}(p) + O( \eps ),$$
where $P:=2^m$. By \eqref{fda-def-again}, it suffices to show that
$$ \limsup_{X \to \infty} |\E_{P \leq p<2P} \E_{n \leq X/P} \lambda(n) \lambda(n+ph)| \ll \eps$$
for sufficiently large $P$.  But this follows from the results in \cite[\S 3]{tao}, specifically Lemmas 3.6, 3.7 and equation (2.9) of that paper\footnote{When applying these results, note that (in the notation of \cite[Lemma 3.6]{tao}) the length of the sum over $j\in [1,H]$ and the range $\mathcal{P}_H$ of primes $p$ do not need to be controlled by the same parameter $H$.} (see also Remark 3.8 for a simplification in the case of the Liouville function).  We remark that the equation \cite[(2.8)]{tao} relies crucially on the Matom\"aki-Radziwi{\l}{\l} theorem \cite{mr} (as applied in \cite{mrt}).

\begin{remark}  A similar argument also gives the odd order cases of the Chowla conjecture if one strengthens the hypothesis of Theorem \ref{sign-thm} to hold for \emph{all} sufficiently large $K$, rather than for arbitrarily large $K$, by using the arguments in \cite[\S 3]{tt-odd} (but with the exceptional sets ${\mathcal M}_1$ in those arguments now being empty, and using unweighted averaging in $n$ rather than logarithmic averaging).  We leave the details to the interested reader.
\end{remark}

\end{document}